\documentclass[11pt]{article}

\usepackage[T1]{fontenc}
\usepackage[utf8]{inputenc}
\usepackage{lmodern}        
\usepackage{iftex}          
\ifPDFTeX                   
  \usepackage[activate={true,nocompatibility},final]{microtype}
\else                       
  \usepackage[protrusion=true,final]{microtype}
\fi

\usepackage{times}

\usepackage[margin=1in]{geometry}

\usepackage{authblk}                    

\usepackage{amsmath}
\usepackage{amssymb,amsfonts}
\usepackage{mathtools}      
\usepackage{bm}             
\usepackage{nicefrac}
\allowdisplaybreaks         

\usepackage{amsthm}
\usepackage{thm-restate}    

\usepackage{graphicx}
\graphicspath{{figures/}{./}}                       
\usepackage{booktabs}                               
\usepackage{multirow,makecell,array}
\usepackage[font=small,labelfont=bf]{caption}       
\usepackage{subcaption}                             
\usepackage{algorithm}
\usepackage{algpseudocode}                          
\makeatletter
\renewcommand{\theHALG@line}{\thealgorithm.\arabic{ALG@line}}
\makeatother

\usepackage{enumitem}
\setlist[itemize]{leftmargin=2.2em,itemsep=2pt,topsep=2pt}
\setlist[enumerate]{leftmargin=2.2em,itemsep=2pt,topsep=2pt}

\usepackage{xcolor}
\definecolor{LinkColor}{rgb}{0.10,0.40,0.75}        
\definecolor{CiteColor}{rgb}{0.70,0.25,0.20}        
\definecolor{UrlColor} {rgb}{0.20,0.50,0.50}        
\definecolor{TodoColor}{rgb}{0.80,0.30,0.10}        

\usepackage{tikz}
\usetikzlibrary{positioning,calc,arrows.meta}

\usepackage[round,sort&compress]{natbib}

\usepackage{url}
\usepackage{hyperref}
\hypersetup{colorlinks=true,linkcolor=red!70!black,linktocpage=false,citebordercolor=blue!70!black,citecolor=blue!70!black,anchorcolor=blue!70!black}
\usepackage{bookmark}                               
\usepackage[capitalise,nameinlink,noabbrev,sort&compress]{cleveref}  

\numberwithin{equation}{section}

\newif\ifdraft \draftfalse
\ifdraft
  \usepackage{lineno}\linenumbers
  \newcommand{\todo}[1]{\textcolor{TodoColor}{\textbf{[TODO:}~#1\textbf{]}}}
\else
  \newcommand{\todo}[1]{}                            
\fi

\newcommand{\R}{\mathbb{R}}

\newcommand{\eps}{\varepsilon}

\newcommand{\defeq}{\coloneqq}                    

\DeclareMathOperator*{\argmin}{arg\,min}
\newcommand{\Id}{\mathrm{Id}}

\DeclarePairedDelimiter{\abs}{\lvert}{\rvert}
\DeclarePairedDelimiter{\norm}{\lVert}{\rVert}

\DeclarePairedDelimiterX{\inner}[2]{\langle}{\rangle}{#1,#2}   

\newcommand{\Man}{\mathcal{M}}
\newcommand{\Hyp}{\mathbb{H}}
\newcommand{\Exp}{\operatorname{Exp}}
\newcommand{\Log}{\operatorname{Log}}
\newcommand{\grad}{\operatorname{grad}}
\newcommand{\Sec}{\operatorname{Sec}}
\newcommand{\PT}{\mathsf{P}}
\newcommand{\Qp}{Q_p}
\newcommand{\Fh}{\mathcal{F}^{\mathrm h}_{p,\beta}}
\newcommand{\Fg}{\mathcal{F}^{\mathrm g}_{p}}

\theoremstyle{plain}
\newtheorem{theorem}{Theorem}[section]
\newtheorem{proposition}[theorem]{Proposition}
\newtheorem{lemma}[theorem]{Lemma}

\theoremstyle{definition}
\newtheorem{definition}[theorem]{Definition}

\theoremstyle{remark}

\crefname{theorem}{Theorem}{Theorems}          \Crefname{theorem}{Theorem}{Theorems}
\crefname{proposition}{Proposition}{Propositions}
\Crefname{proposition}{Proposition}{Propositions}
\crefname{lemma}{Lemma}{Lemmas}                \Crefname{lemma}{Lemma}{Lemmas}
\crefname{corollary}{Corollary}{Corollaries}   \Crefname{corollary}{Corollary}{Corollaries}
\crefname{conjecture}{Conjecture}{Conjectures} \Crefname{conjecture}{Conjecture}{Conjectures}
\crefname{fact}{Fact}{Facts}                   \Crefname{fact}{Fact}{Facts}
\crefname{definition}{Definition}{Definitions} \Crefname{definition}{Definition}{Definitions}
\crefname{assumption}{Assumption}{Assumptions} \Crefname{assumption}{Assumption}{Assumptions}
\crefname{example}{Example}{Examples}          \Crefname{example}{Example}{Examples}
\crefname{problem}{Problem}{Problems}          \Crefname{problem}{Problem}{Problems}
\crefname{remark}{Remark}{Remarks}             \Crefname{remark}{Remark}{Remarks}
\crefname{claim}{Claim}{Claims}                \Crefname{claim}{Claim}{Claims}
\crefname{algorithm}{Algorithm}{Algorithms}    \Crefname{algorithm}{Algorithm}{Algorithms}

\title{The Blessing and Curse of Curvature in Higher-Order Optimization:\\
Horospherical versus Geodesic Convexity}
\author{Andi Han}
\affil{School of Mathematics and Statistics, University of Sydney \\ \texttt{andi.han@sydney.edu.au}}
\date{}                                           

\hypersetup{
  pdftitle={The Blessing and Curse of Curvature in Higher-Order Optimization: Horospherical versus Geodesic Convexity},
  pdfauthor={Anonymous authors}
}

\begin{document}
\maketitle

\begin{abstract}
We study deterministic Riemannian \(p\)-th-order oracle
complexity (for \(p\ge2\)) on Hadamard manifolds, under strong horospherical
\((h)\)-convexity and strong geodesic \((g)\)-convexity. The two notions agree
in the Euclidean space. On a curved Hadamard manifold, \(h\)-convexity is a 
stronger notion than \(g\)-convexity and supplies global horospherical information.

Writing the $p$-th-order condition parameter \(Q_p=L_pR^{p-1}/\mu\), we obtain the Euclidean-optimal rate
\(Q_p^{2/(3p+1)}\)  for strongly \(h\)-convex objectives on every
Hadamard manifold, with a matching fixed-curvature lower bound. On hyperbolic
space, the resulting horoball supports enable localization to the curvature scale. 
This replaces the condition parameter \(Q_p\) by $Q_p \min\{1, 4/(\kappa R) \}^{p-1}$, 
subject to a logarithmic localization cost. 
Thus growing negative curvature ($\kappa R \rightarrow \infty$) can further improve 
the optimal Euclidean rate under \(h\)-convexity. For strongly \(g\)-convex objectives, matching upper and lower bounds recover
the same Euclidean exponent when \(\kappa R=O(1)\). In contrast, with growing \(\kappa R\),
 we construct a hard family on the hyperbolic space with
\(Q_p\asymp_p(1+\kappa R)^p\) that requires
\(
\widetilde\Omega_p(Q_p^{1/p})\) queries. This reveals a fundamental separation: the same hyperbolic divergence
that sharpens horoball localization yields an information-theoretic
obstruction for the full \(g\)-convex class.

\end{abstract}

\section{Introduction}
\label{sec:intro}

Higher-order methods can attain the optimal oracle rate for smooth strongly
convex optimization in Euclidean space
\citep{Nesterov2021,KornowskiShamir2020}. On a Riemannian manifold, curvature
changes both the comparison of tangent spaces and the global geometry where information is acquired.
First-order oracle lower bounds show that
this dependence can be unavoidable for the geodesic-convex (\(g\)-convex) class where negative curvature
obstructs full acceleration
\citep{HamiltonMoitra2021,CriscitielloBoumal2022,CriscitielloBoumal2023}. 
A recent paper \citep{CriscitielloKim2025} proves curvature-free first-order acceleration under a 
restricted notion of horospherical convexity (\(h\)-convexity), where curvature appears benign. 

In this paper, we study the optimization problem on Hadamard manifolds 
(a special class of Riemannian manifolds with non-positive curvature)
with access to exact Riemannian \textit{$p$-th-order oracle}. We show that the role of negative curvature also depends on
the notion of convexity, where curvature can improve higher-order
optimization or create an information-theoretic obstruction. 
We compare strong \(h\)-convexity, introduced by
\citet{CriscitielloKim2025}, with the strong \(g\)-convexity.
The distinction vanishes in Euclidean space, 
where strong \(h\)-convexity becomes equivalent to \(g\)-convexity. 
On a Hadamard manifold, strong \(h\)-convexity is a smaller class of
 strong \(g\)-convexity, where 
the \(h\)-convex class supplies additional global structure whose
algorithmic value appears only in curved geometry. Earlier work used
horospherically convex level sets to obtain curvature-independent subgradient
guarantees on Hadamard spaces \citep{LewisLopezAcedoNicolae2024}.

At first order, \citet{CriscitielloKim2025} prove Euclidean-optimal-rate Nesterov
acceleration for both smooth \(h\)-convex and smooth strongly \(h\)-convex
objectives. They also show that gradients localize the minimizer to the
curvature scale in \(O(\log(1+\kappa R))\) queries on hyperbolic space. Their
curvature-improving guarantee combines this localization with a cutting-plane
method, while their smooth accelerated guarantees are curvature-independent.
This distinction matters for the higher-order results in this paper. Fix an oracle order \(p\ge2\), a strong-convexity parameter \(\mu\), a radius
budget \(R\), and a Lipschitz constant \(L_p\) for the \(p\)-th covariant
derivative. The dimensionless condition parameter is
\(\Qp=L_pR^{p-1}/\mu\).
At order one, the strongly
convex condition parameter is \(L_1/\mu\) and contains no radius. At order
\(p\ge2\), the parameter \(L_pR^{p-1}/\mu\) changes when localization replaces
\(R\) by the curvature scale.

\paragraph{Main Results.} 
For strongly \(h\)-convex objectives, we obtain the
optimal \(\Qp^{2/(3p+1)}\) rate on Hadamard manifold  (\cref{thm:h-pth-oracle-upper}), and 
obtain a matching lower bound on hyperbolic space of fixed curvature (\cref{thm:h-pth-oracle-lower}). 
On hyperbolic space of curvature \(-\kappa^2\), we first localize the
minimizer to radius \(O(1/\kappa)\) based on \citep{CriscitielloKim2025}. The key idea is that hyperbolic divergence makes
intersections of suitable horoballs contract. 
This allows to replace \(\Qp\) by the effective
parameter \(\Qp
\min\{1,4/(\kappa R)\}^{p-1}\) (\cref{thm:h-curvature-adaptive-upper}). Thus growing negative curvature can render the
\(h\)-convex problem easier, apart from a logarithmic localization cost. 

For strongly \(g\)-convex objectives, we show in \cref{thm:g-upper}, the same Euclidean exponent remains
optimal when the curvature--radius product is bounded, i.e., $\kappa R = O(1)$. 
We also prove  a matching lower bound on Hadamard manifolds (\cref{thm:g-bounded-curvature-lower}).
In contrast, when \(\kappa R\) grows, we construct
a hard family on hyperbolic space with \(\Qp\asymp_p(1+\kappa R)^p\), 
where every deterministic 
exact Riemannian \(p\)-th-order method requires
\(\Omega_p(\kappa R/\log(2+\kappa R))\) queries, which gives a lower bound
 \(\widetilde\Omega_p(\Qp^{1/p})\) (\cref{thm:g-growing-curvature-lower}). 
 This extends the negative results from first-order \citep{CriscitielloBoumal2022,CriscitielloBoumal2023}
 to higher-order oracles and smoothness, where negative curvature obstructs acceleration.

\paragraph{Relation to prior work.}
Euclidean upper bounds build on accelerated regularized tensor methods
\citep{Nesterov2021}. For strongly convex objectives, the corresponding
condition-number exponent is matched by the oracle lower bound of
\citet{KornowskiShamir2020}. Existing higher-order Riemannian results
primarily address nonconvex stationarity
\citep{AgarwalBoumalBullinsCartis2021,GutmanLobo2026}. Acceleration for
\(g\)-convex optimization has been developed mainly for first-order or
proximal methods, with curvature entering through comparison inequalities and
metric distortion
\citep{ZhangSra2018,KimYang2022,MartinezRubioPokutta2023}. First-order lower
bounds already show that negative curvature can obstruct robust acceleration
for the \(g\)-convex class
\citep{HamiltonMoitra2021,CriscitielloBoumal2022}. Horospherical convexity,
its curvature-independent algorithms, and curvature-scale localization were
introduced by \citet{CriscitielloKim2025} in a first-order setting. To our
knowledge, this work is the \textit{first higher-order oracle-complexity} study
of strongly convex optimization on Hadamard manifolds. It establishes
upper and lower bounds in an exact Riemannian \(p\)-th-order oracle model.
Its further distinction is geometric: the same negative curvature can improve
the complexity of the stronger \(h\)-convex class while obstructing a
curvature-independent accelerated rate for the entire \(g\)-convex class. A
more complete comparison is given in \cref{app:related-work}.

\section{Problem formulation and oracle model}
\label{sec:problem}

We study deterministic higher-order optimization on Hadamard manifolds under two
different notions of convexity. This section gives a preliminary introduction to the 
hadamard geometry and convexity.

\subsection{Hadamard geometry and covariant tensors}
\label{sec:geometry-notation}

Throughout, \(\Man\) is a finite-dimensional Hadamard manifold: it is complete and
simply connected, with sectional curvature \(\Sec_{\Man}\le 0\).  We write \(g_x\)
and \(\norm{\cdot}_x\) for the Riemannian metric and norm on \(T_x\Man\), and
\(d(\cdot, \cdot)\) for the induced distance.  The exponential map
\(\Exp_x:T_x\Man\to\Man\) is a global diffeomorphism. Its inverse is denoted by
\(\Log_x\). Thus any \(x,y\in\Man\) are joined by the unique minimizing
geodesic \(\gamma_{x,y}(t)=\Exp_x(t\Log_x y)\), where \(0\le t\le1\).
A set \(A\subseteq\Man\) is \emph{geodesically convex} if
\(\gamma_{x,y}([0,1])\subseteq A\) for every \(x,y\in A\).
Parallel transport along this geodesic is denoted by
\(\PT_{x\to y}:T_x\Man\to T_y\Man\).  Closed metric balls are written
\(\overline B(x,r)\). They are geodesically convex.

For \(k\ge2\), \(\nabla^k f(x)\) denotes the ordered \(k\)-th covariant
derivative of \(f\) at \(x\).  If \(T\) is a covariant \(k\)-tensor at \(x\),
its operator norm is
\(\norm{T}_{x,\mathrm{op}}\defeq
\sup_{\norm{v_1}_x=\cdots=\norm{v_k}_x=1}
\abs{T[v_1,\ldots,v_k]}\). Transport of covariant tensors is defined
slotwise by
\(\bigl(\PT^{(k)}_{x\to y}T\bigr)[v_1,\ldots,v_k]\defeq
T[\PT_{y\to x}v_1,\ldots,\PT_{y\to x}v_k]\).
We say that the \(p\)-th covariant derivative of \(f\) is
\(L_p\)-Lipschitz on \(A\) if, for all \(x,y\in A\),
\begin{equation}
\label{eq:p-smoothness}
 \norm{\nabla^p f(y)-\PT^{(p)}_{x\to y}\nabla^p f(x)}
      _{y,\mathrm{op}}
 \le L_p d(x,y).
\end{equation}
We refer to \eqref{eq:p-smoothness} as the \(p\)-th-order smoothness
assumption.  It is the intrinsic, Riemannian version of the standard
Euclidean condition used in higher-order oracle complexity
\citep{KornowskiShamir2020}, and it is the condition that controls the
Riemannian Taylor remainder \citep[Proposition~4.12]{GutmanLobo2026}.  It
constrains only the \(p\)-th derivative. No bounds on the lower-order derivatives at
a base point are implicit.

\subsection{Geodesic convexity and horospherical convexity}
\label{sec:convexity-models}

We first recall the definition of strong geodesic convexity, and then introduce 
horospherical convexity, which replaces the tangent-space quadratic lower model by an
intrinsic squared-distance lower model.

\begin{definition}[Strong geodesic convexity
  {\citep{Boumal2023}}]
\label{def:strong-g-convexity}
Let \(A\subseteq\Man\) be geodesically convex.  A differentiable function
\(f:A\to\R\) is \(\mu\)-strongly \(g\)-convex on \(A\) if, for all
\(x,y\in A\),
\begin{equation}
\label{eq:strong-g-convexity}
 f(y)\ge
 f(x)+\inner{\grad f(x)}{\Log_x y}_{x}
       +\frac{\mu}{2}d(x,y)^2 .
\end{equation}
For \(C^2\) functions, identify the covariant Hessian with its associated
self-adjoint endomorphism \(\operatorname{Hess}f(x):T_x\Man\to T_x\Man\).
The condition is then equivalent to
\(\operatorname{Hess}f(x)\succeq\mu\Id_{T_x\Man}\) on \(A\).
\end{definition}

\begin{definition}[Strong horospherical convexity
  {\citep{CriscitielloKim2025}}]
\label{def:strong-h-convexity}
Let \(A\subseteq\Man\) and let \(\mu>0\).  A function \(f:A\to\R\) is
\(\mu\)-strongly \(h\)-convex on \(A\) if, for every \(y\in A\), there exists
\(v\in T_y\Man\) such that, for every \(x\in A\),
\begin{equation}
\label{eq:h-minorant}
 f(x)-f(y)
 \ge
 Q_{y,v}^\mu(x)
 \defeq
 -\frac{\norm{v}_y^2}{2\mu}
 +\frac{\mu}{2}
 d\bigl(\Exp_y(-v/\mu),x\bigr)^2 .
\end{equation}
Equality holds at \(x=y\), since
\(d\bigl(y,\Exp_y(-v/\mu)\bigr)=\norm{v}_y/\mu\).  The point
\(\Exp_y(-v/\mu)\) is allowed to lie outside \(A\).
\end{definition}

Strong \(h\)-convexity replaces the tangent-space quadratic lower model in
\eqref{eq:strong-g-convexity} by the intrinsic squared-distance model
\(f(y)+Q_{y,v}^\mu\).  The vector \(v\) is an \(h\)-subgradient at \(y\).
For a differentiable globally \(h\)-convex function it is necessarily
\(\grad f(y)\) \citep[Proposition~2(v)]{CriscitielloKim2025}.  In our
ball-restricted differentiable class, we impose this canonical choice also at
boundary points.  In Euclidean space the model coincides with the standard
quadratic lower model.  On a Hadamard manifold, it is \(\mu\)-strongly \(g\)-convex, so
strong \(h\)-convexity implies strong \(g\)-convexity with the same
strong-convexity parameter \(\mu\)
\citep[Proposition~2(iv)]{CriscitielloKim2025}.
The underlying non-strong notions are not equivalent on hyperbolic space
\citep[Appendix~B.3]{CriscitielloKim2025}.  The term
\emph{horospherical} comes from the limit \(\mu\rightarrow 0\), in which the
center \(\Exp_y(-v/\mu)\) recedes to infinity and \(Q_{y,v}^\mu\) converges
to a Busemann support function \citep[Proposition~1]{CriscitielloKim2025}.



\subsection{The two optimization classes}
\label{sec:problem-classes}

Fix an order \(p\ge2\), an initial point \(x_0\), and a radius budget
\(R>0\).  For a radius \(r>0\), write
\(Q_p(r)\defeq L_pr^{p-1}/\mu\). The common dimensionless \(p\)-th-order
condition parameter is
\begin{equation}
\label{eq:Qp}
  \Qp\defeq Q_p(R)=\frac{L_pR^{p-1}}{\mu}.
\end{equation}
The power \(R^{p-1}\) corresponds to the convention that the \(p\)-th
covariant derivative is \(L_p\)-Lipschitz.

\paragraph{The \(h\)-convex class.}
For an inflation factor \(\beta\ge1\), let
\(B_{\mathrm h}^{\beta}\defeq\overline B(x^\star,\beta R)\).  The class
\(\Fh(\Man;\mu,L_p,R)\) consists of objectives \(f\), defined on a
neighborhood of \(B_{\mathrm h}^{\beta}\), with a minimizer \(x^\star\)
satisfying \(d(x_0,x^\star)\le R\), such that
\(f\) is \(\mu\)-strongly \(h\)-convex on \(B_{\mathrm h}^{\beta}\) with
\(v=\grad f(y)\), and \(\nabla^p f\) is \(L_p\)-Lipschitz on
\(B_{\mathrm h}^{\beta}\). 
All \(h\)-convex oracle queries are made in \(B_{\mathrm h}^{\beta}\).
The corresponding upper theorem proves that its iterates remain inside it.
The \(p\)-th-order upper bound uses a fixed constant-factor inflation:
\(\beta\) may depend on \(p\), but not on \(\Qp\), the accuracy, or the
dimension.  Both \(h\)-convex upper bounds hold on general Hadamard
manifolds, without a lower sectional-curvature bound.

\paragraph{The \(g\)-convex class.}
Here the center of the working region is the given initial point.  Set
\begin{equation}
\label{eq:g-domains}
 X\defeq\overline B(x_0,2R),
 \qquad
 U\defeq\overline B(x_0,10R),
 \qquad
 D\defeq\operatorname{diam}(X)\le4R.
\end{equation}
The class
\(\Fg(\Man;\mu,L_p,R)\) consists of objectives \(f\), defined on a
neighborhood of \(U\), whose constrained minimizer
\(x^\star=\argmin_{x\in X}f(x)\) satisfies \(d(x_0,x^\star)\le R\), and
such that \(f\) is \(\mu\)-strongly \(g\)-convex on \(X\), and
\(\nabla^p f\) is \(L_p\)-Lipschitz on \(U\).
Algorithms minimize over \(X\), and every oracle query must lie in \(U\).

For the \(g\)-convex upper bound we additionally assume that, for some
\(\kappa\ge0\), \(-\kappa^2\le\Sec_{\Man}\le0\), and quantify its
geometric distortion, following \citet[Lemma~5]{ZhangSra2016}, for
\(s\ge0\), by
\begin{equation}
\label{eq:curvature-factors}
 \zeta_\kappa(s)
 \defeq
 \begin{cases}
   \kappa s\coth(\kappa s),&\kappa s>0,\\
   1,&\kappa s=0,
 \end{cases}
 \qquad
 \xi_D\defeq4\zeta_\kappa(2D)-3.
\end{equation}
We consider two curvature regimes.
\begin{itemize}
  \item \textit{Bounded curvature.} Fix a constant \(\Xi\ge1\) and assume
  \(\xi_D\le\Xi\), with \(\Xi\) independent of \(\Qp\) and the target
  accuracy. Since \(D\le4R\), the simpler condition \(\kappa R=O(1)\) is
  sufficient for this regime.
  \item \textit{Growing curvature.} The lower bound concerns the hyperbolic
  space \(\Hyp^n_\kappa\) of constant curvature \(-\kappa^2\), where
  \(\kappa R\to\infty\).
\end{itemize}

The two working-region conventions reflect the algorithms. The ball
\(B_{\mathrm h}^{\beta}\) is an analysis region for the \(h\)-convex method:
its center \(x^\star\) is unknown, and the upper-bound proof shows that every
query lies in this ball. In contrast, \(X\) and \(U\) are public regions for
the \(g\)-convex method. Thus both models impose \(p\)-th-order regularity
only on a region containing all oracle queries.

\subsection{Exact information oracles}
\label{sec:oracle-model}

The public problem data include \(x_0,\mu,L_p,R\), the fixed inflation
factors, and any curvature bound appearing in the relevant theorem.  The
instance-dependent minimizer \(x^\star\) is not supplied to the method.
At a query point \(x\), the exact Riemannian \(p\)-th-order oracle returns
\begin{equation}
\label{eq:riemannian-oracle}
 \mathcal O_x^{(p)}f
 \defeq
 \bigl(f(x),\grad f(x),\nabla^2f(x),\ldots,\nabla^pf(x)\bigr).
\end{equation}
A deterministic method chooses each query as a function of the history of
preceding queries and oracle replies and, after \(T\) calls, outputs a point
determined by this history.
One lower-order query is considered as one \(p\)-th-order query.

Following the Euclidean higher-order lower-bound literature
\citep{AgarwalHazan2018,ArjevaniShamirShiff2019,GargKothariNetrapalliSherif2021},
we measure \(p\)-th-order oracle complexity.
Standard geometric primitives and arbitrary finite-dimensional computation
based on the returned derivative information incur no additional oracle
cost. For the \(h\)-convex upper bounds, these uncharged geometric operations
also include evaluating the Busemann functions used in
\cref{alg:h-pth-epoch} and solving its Busemann subproblem. The results
therefore bound oracle complexity, not polynomial arithmetic complexity or
bit complexity.

\subsection{Accuracy and complexity conventions}
\label{sec:complexity-conventions}

Write \(f^\star=f(x^\star)\).  An \textbf{\(\boldsymbol{\eps}\)-solution}
satisfies \(f(\widehat x)-f^\star\le\eps\), and its deterministic oracle
complexity is the worst-case number of oracle
calls over the relevant class.  In both classes, \(x^\star\) lies in the
interior of the relevant ball, so \(\grad f(x^\star)=0\).  Substituting
\(x^\star\) into either \eqref{eq:strong-g-convexity} or
\eqref{eq:h-minorant} gives
\begin{equation}
\label{eq:gap-distance}
  f(x)-f^\star\ge\frac{\mu}{2}d(x,x^\star)^2.
\end{equation}
\paragraph{Notations.}
Unless stated otherwise, \(p\) is fixed, lower bounds concern deterministic
algorithms in sufficiently high dimension, and constants hidden by
\(O_{p,\Xi}(\cdot)\) or \(\Omega_{p,\Xi}(\cdot)\) may depend on \(p\) and on
the bounded-curvature constant \(\Xi\), but not on \(\Qp\), \(\eps\), or the dimension.
Statements involving growing \(\kappa R\) display that dependence
separately.  \(\widetilde O\) or \(\widetilde \Omega\) suppresses only logarithmic
factors in the displayed dimensionless parameters. We write
\(\log_+t\defeq\max\{0,\log t\}\).

\section{Main results}
\label{sec:results}

The results separate three geometric regimes. In Euclidean space, strong
\(h\)-convexity and strong \(g\)-convexity both reduce to ordinary strong
convexity. Under bounded curvature, both classes retain the Euclidean-optimal
\(\Qp^{2/(3p+1)}\) rate. When \(\kappa R\) grows on hyperbolic space, the
two theories diverge. Horospherical supports localize the minimizer to the
curvature scale, whereas the larger \(g\)-convex class contains instances
with a curvature-dependent lower bound.
An upper bound using the additional \(h\)-convex structure may improve with
curvature, while a lower bound for the \(g\)-convex class may be attained by
instances without that structure. \Cref{tab:phase-diagram}
summarizes the resulting comparison.


\begin{table}[t]
\centering
\caption{Curvature-dependent higher-order complexity. In Euclidean space,
strong \(h\)- and \(g\)-convexity reduce to the strongly convex class in the
first row \citep{KornowskiShamir2020}.
The nonconvex results on manifolds use the pullback \(p\)-th-order model \citep{GutmanLobo2026}, 
where \(L_1\) and \(L_2\) are its first- and second-order pullback regularity
constants.
The convex results in this work use the exact Riemannian \(p\)-th-order oracle. 
The growing-curvature row uses the family with
\(Q_p\asymp_p(1+\kappa R)^p\). The table displays condition and curvature
dependence, while the formal upper theorems also state the dependence on the
target accuracy.}
\label{tab:phase-diagram}
\footnotesize
\renewcommand{\arraystretch}{1.3}
\begin{tabular}{@{}>{\raggedright\arraybackslash}p{0.95in}
                    >{\raggedright\arraybackslash}p{0.72in}
                    >{\raggedright\arraybackslash}p{0.67in}
                    >{\raggedright\arraybackslash}p{2.00in}
                    >{\raggedright\arraybackslash}p{1.40in}@{}}
\toprule
Class & Geometry & Curvature & Guarantee & Reference \\
\midrule
strongly convex & \(\R^m\) & flat &
\(\widetilde\Theta_p(\Qp^{2/(3p+1)})\) &
\citep{KornowskiShamir2020} \\
nonconvex & Riemannian & general &
\makecell[l]{\(O((L_1/\eps_1)^{(p+1)/p})^{\dagger}\)\\
\(O(\max\{(L_1/\eps_1)^{(p+1)/p},\)\\
\((L_2/\eps_2)^{(p+1)/(p-1)}\})^{\ddagger}\)} &
\citep{GutmanLobo2026} \\
\midrule
\multicolumn{5}{@{}l}{\textbf{\textit{This work}}} \\
strongly \(h\)-convex & Hadamard & \(\le0\) &
\(\widetilde O_p(1+\Qp^{2/(3p+1)})\) & \cref{thm:h-pth-oracle-upper} \\
strongly \(h\)-convex & \(\Hyp^n_\kappa\) & \(\kappa R\to\infty\) &
\(\widetilde O_p\!\bigl(
1+[\Qp/(\kappa R)^{p-1}]^{2/(3p+1)}
\bigr)\) &
\cref{thm:h-curvature-adaptive-upper} \\
strongly \(h\)-convex & \(\Hyp^n\) & \(-1\) &
\(\widetilde\Omega_p(\Qp^{2/(3p+1)})\) & \cref{thm:h-pth-oracle-lower} \\
strongly \(g\)-convex & Hadamard & \(\kappa R=O(1)\) &
\(\widetilde\Theta_{p,\Xi}(\Qp^{2/(3p+1)})\) &
\cref{thm:g-upper,thm:g-bounded-curvature-lower} \\
strongly \(g\)-convex & \(\Hyp^2_\kappa\) & \(\kappa R\to\infty\) &
\(\widetilde\Omega_p(\Qp^{1/p})\) &
\cref{thm:g-growing-curvature-lower} \\
\bottomrule
\end{tabular}
\par\vspace{2pt}
\begin{minipage}{\linewidth}
\footnotesize
\(\dagger\) \(\eps_1\)-stationarity. \quad
\(\ddagger\) \((\eps_1,\eps_2)\)-second-order stationarity for \(p\ge3\).
\end{minipage}
\renewcommand{\arraystretch}{1}
\end{table}

\subsection{Horospherically convex optimization}

The \(h\)-convex results consist of a general-Hadamard upper bound, its
curvature-adaptive refinement on hyperbolic space, and a fixed-curvature lower
bound. We begin with the general guarantee.

\begin{theorem}[Riemannian \(p\)-th-order \(h\)-convex upper bound]
\label{thm:h-pth-oracle-upper}
Fix \(p\ge2\).  There is a constant \(\beta_p\ge1\), depending only on
\(p\), and a deterministic exact Riemannian \(p\)-th-order method on
every Hadamard manifold \(\Man\) such that, for every
\(f\in\Fh(\Man;\mu,L_p,R)\) with \(\beta=\beta_p\), it returns an
\(\eps\)-solution after
\[
 \widetilde O_p\!\left(1+\Qp^{2/(3p+1)}\right)
 +O_p\!\left(\log\log\!\left(e^e+{\mu R^2}/{\eps}\right)\right)
\]
queries.
\end{theorem}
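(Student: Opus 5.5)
The plan is to follow the Euclidean template for strongly convex higher-order methods: a restart scheme wrapped around an accelerated method for the non-strongly convex case, with one twist. The non-strongly convex engine will be run on the horospherical lower models, not on tangent-space linearizations. The paper mentions a Busemann subproblem in \cref{alg:h-pth-epoch}, so I would build each epoch as an estimate-sequence (Nesterov-type accelerated tensor) method in which

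\emph{upper models} come from the Riemannian Taylor expansion at a query point $y$, pulled back by $\Exp_y$. By the $L_p$-Lipschitz bound \eqref{eq:p-smoothness} and \citep[Proposition~4.12]{GutmanLobo2026}, the regularized tensor model $\Omega_{y}(v)=\sum_{k\le p}\frac1{k!}\nabla^k f(y)[v]^k+\frac{M}{(p+1)!}\norm{v}^{p+1}$ with $M\gtrsim L_p$ majorizes $f\circ\Exp_y$ on the relevant ball;

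\emph{lower models} aggregate the $h$-minorants \eqref{eq:h-minorant} with $v=\grad f(y)$. These are squared distances to centers $\Exp_y(-\grad f(y)/\mu)$, and in the weak-convexity limit they become Busemann functions.

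The key point is that sums of squared distances and Busemann functions are intrinsic $g$-convex functions on $\Man$. So the estimate function $\psi_k(x)=\sum_i a_i\bigl[f(y_i)+Q^{\mu}_{y_i,\grad f(y_i)}(x)-f(y_i)\bigr]+\frac{1}{2}d(x_0,x)^2$ lives directly on the manifold. Its minimizer gives the ``$v_k$'' sequence without any tangent-space comparison inequality, and hence without curvature factors. This is exactly the mechanism behind the curvature-free first-order acceleration of \citet{CriscitielloKim2025}.

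\textbf{Epoch guarantee.} Within one epoch started at $z$ with $d(z,x^\star)\le r$, I would prove the Euclidean-style bound $f(x_T)-f^\star\lesssim_p L_p r^{p+1}/T^{(3p+1)/2}$. The standard accelerated tensor analysis gives the rate $T^{-(p+1)}$; the optimal $(3p+1)/2$ exponent needs the Monteiro--Svaiter large-step / bisection ``near-optimal'' scheme, which would be the safer target here, at the cost of log factors, hence $\widetilde O$. The geodesic interpolation $y_k=\gamma_{x_k,v_k}(\tau_k)$ replaces the convex combination. The inequality needed is $f(y_k)+Q(v_k)$-type comparisons at $v_k$ and $x_k$. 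These should follow from the $h$-minorant evaluated at both endpoints, together with $g$-convexity of $d(c,\cdot)^2$ along the geodesic, which holds on Hadamard manifolds (CAT(0) inequality $d(c,\gamma(t))^2\le(1-t)d(c,x)^2+t\,d(c,v)^2-t(1-t)d(x,v)^2$). This CAT(0) inequality has the right sign for upper-bounding the model along the interpolation. That is why no lower curvature bound is needed.

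\textbf{Restarting and localization.} By \eqref{eq:gap-distance}, after an epoch $d(x_T,x^\star)^2\le 2(f(x_T)-f^\star)/\mu$. Choosing $T\asymp (L_p r^{p-1}/\mu)^{2/(3p+1)}$ halves the distance. The condition parameter at radius $r/2$ shrinks by $2^{-(p-1)}$, so the epoch lengths form a geometric series dominated by the first, giving $\widetilde O_p(1+\Qp^{2/(3p+1)})$ until $Q_p(r)\lesssim 1$. Once $Q_p(r)=O(1)$, the regularized Newton/tensor step itself converges superlinearly (order $p$ local convergence from the Taylor remainder plus $\mu$-strong convexity). This yields the $O_p(\log\log(e^e+\mu R^2/\eps))$ tail, since reaching gap $\eps$ requires distance $\sqrt{2\eps/\mu}$ from starting distance $R$.

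\textbf{Containment.} I must also show all iterates stay in $\overline B(x^\star,\beta_pR)$. I would bound $d(v_k,x^\star)$ via $\psi_k(x^\star)\ge\psi_k(v_k)+\frac{A_k\mu+1}{2}d(v_k,x^\star)^2$, which uses strong $g$-convexity of $\psi_k$. This keeps $v_k$ and $x_k$ within $O(R)$, and the tensor steps move by at most $O(r)$ via the regularization.

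\textbf{Main obstacle.} I expect the main obstacle to be making the optimal-rate (large-step, bisection-based) accelerated tensor method work intrinsically. Its analysis in Euclidean space uses the identity relating the step $x_{k+1}-y_k$ to the gradient at $x_{k+1}$, and linear aggregation of gradients in a single vector space. Here the gradient at $x_{k+1}$ lives in $T_{x_{k+1}}\Man$, while the upper model lives in $T_{y_k}\Man$. I would first try to avoid transport entirely: use only the scalar inequality $\inner{\grad f(x_{k+1})}{-\Log_{x_{k+1}}y_k}\ge c\,\norm{\grad f(x_{k+1})}^{p+1/p}/L_p^{1/p}$, derivable from the Taylor remainder via the pullback at $y_k$ and the differential of $\Exp$, which is norm-nonexpanding in the needed direction on Hadamard manifolds. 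I would then feed it into the distance-based estimate sequence, whose minimization is the uncharged Busemann subproblem.
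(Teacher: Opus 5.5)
Your outer architecture matches the paper's: restarted epochs with halving radius, an estimate sequence on the manifold built from horospherical lower models, a Monteiro--Svaiter large-step search giving $A_N\gtrsim N^{(3p+1)/2}/(L_pD^{p-1})$, and a superlinear local tail once $Q_p(r)=O(1)$. The paper uses the $1$-Lipschitz Busemann minorants of \cref{lem:h-busemann-minorant} rather than the $\mu$-minorants. Restarting makes strong convexity inside an epoch unnecessary, and Lipschitzness bounds $\min_x\{\tfrac12 d(x,\omega_k)^2+a_{k+1}\norm{v_k}b_k(x)\}$ directly.

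The genuine gap is at the step you call the main obstacle, and the route you propose there fails. The class bounds only the transport-Lipschitz constant of the covariant tensor $\nabla^p f$. It gives no control of lower-order derivatives and, in this theorem, no lower curvature bound. Along the geodesic from $y_k$ you get curvature-free remainders for values (so $\Omega_{y_k}$ is a valid majorant) and for the radial derivative. You get nothing for the transverse part of $\grad f(x_{k+1})$, for two reasons. First, the $(p+1)$-st Euclidean derivative of $f\circ\Exp_{y_k}$ contains Jacobi-field terms multiplied by lower-order derivatives of $f$. Second, for $p\ge3$ the gradient of the value Taylor polynomial symmetrizes $\nabla^k f(y_k)$, and this differs from the covariant expansion of the gradient field by curvature times $\grad f$ (Ricci identity). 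Both errors are unbounded over the class. Meanwhile $D\Exp$ is isometric only radially and expanding otherwise. Yet the estimate-sequence step charges the full norm, $-\tfrac{a_{k+1}^2}{2}\norm{v_k}^2$, so radial information alone cannot close the argument. Nesterov's inequality $\inner{\grad f(x_{k+1})}{\Log_{x_{k+1}}y_k}\ge c\norm{\grad f(x_{k+1})}^{(p+1)/p}L_p^{-1/p}$ (your sign is reversed) is itself derived from a full vector residual bound, which is exactly what you lack.

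The missing idea is to use transport, not avoid it. Expand the gradient \emph{field} covariantly at the base point, $\mathcal G_{p,u}(s)=\sum_{j<p}(\nabla^j\grad f)_u[s^{\otimes j}]/j!$, and compare it with $\PT_{x\to u}\grad f(x)$. Since $\nabla^pf$ is $L_p$-Lipschitz under transport and $\PT$ is an isometry, the remainder is at most $\tfrac{L_p}{p!}d(u,x)^p$ on every Hadamard manifold.

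The step is then not the minimizer of a regularized tensor model but the selector \eqref{eq:h-residual-selector}. This is the closest point with $\norm{\mathcal G_{p,u}(\Log_ux)+\Log_ux/\lambda}_u\le\tfrac{L_p}{p!}d(u,x)^p$, a condition in $T_u\Man$ alone. The feasible set is nonempty because the exact proximal point is a witness. The witness also gives \eqref{eq:h-witness-distance} and, via resolvent nonexpansiveness, the containment needed before each query is licensed.

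A second transported Taylor bound at the selected point yields \eqref{eq:h-relative-residual}, namely $\norm{v-w}\le\sigma\norm{w}$ with $w=\lambda^{-1}\Log_{\widetilde x}u$, entirely in $T_{\widetilde x}\Man$. Then $\inner{v}{w}-\tfrac12\norm{v}^2\ge\tfrac{1-\sigma^2}{2}\norm{w}^2$, combined with the basepoint inequality $\norm{v}\,b(\widetilde y)\ge\inner{v}{\Log_{\widetilde x}\widetilde y}$, gives the reserve \eqref{eq:h-one-step-reserve}. That reserve drives the $(3p+1)/2$ growth with constants depending only on $p$. The same selector is also what makes your local tail work, via \cref{lem:h-local-step}.
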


The second term matches the doubly logarithmic dependence on the target
accuracy in the Euclidean upper bound of \citet{KornowskiShamir2020}.

On hyperbolic space, the horoball procedure of
\citet[Proposition~8]{CriscitielloKim2025} localizes the minimizer to the
curvature scale. Combining it with \cref{thm:h-pth-oracle-upper} improves the
condition parameter.

\begin{theorem}[Curvature-adaptive \(h\)-convex upper bound]
\label{thm:h-curvature-adaptive-upper}
Fix \(p\ge2\), \(n\ge2\), and \(\kappa>0\). Suppose that
\(\Man=\Hyp^n_\kappa\) has sectional curvature \(-\kappa^2\). Define
\(\overline R_\kappa\defeq\min\{R,4/\kappa\}\) and
\(\overline Q_{p,\kappa}\defeq
L_p\overline R_\kappa^{p-1}/\mu\).
There is a deterministic exact Riemannian \(p\)-th-order method such that,
for every \(f\in\Fh(\Hyp^n_\kappa;\mu,L_p,R)\) with
\(\beta=\beta_p\), it returns an \(\eps\)-solution after
\[\widetilde O_p\!\left(1+
   \overline Q_{p,\kappa}^{2/(3p+1)}\right)
 +O_p\!\left(
   \log\log(e^e+\mu\overline R_\kappa^2/\eps)\right)
 +O(\log(1+\kappa R))
 \]
    queries.
In addition, every query lies in the original enlarged working ball.
\end{theorem}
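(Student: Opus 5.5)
The plan has two phases: a localization phase, then a restart of the method of \cref{thm:h-pth-oracle-upper} with the smaller radius. If \(\kappa R\le4\), then \(\overline R_\kappa=R\), and \cref{thm:h-pth-oracle-upper} already gives the claimed bound, so we assume \(\kappa R>4\).

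\emph{Localization phase.} We follow the horoball procedure of \citet[Proposition~8]{CriscitielloKim2025}. At each query point \(y\), the \(h\)-subgradient \(v=\grad f(y)\) is part of the \(p\)-th-order oracle reply \eqref{eq:riemannian-oracle}. Combining the minorant \eqref{eq:h-minorant} with \(f(x^\star)\le f(y)\) shows that \(x^\star\) lies in the ball \(\overline B(\Exp_y(-v/\mu),\norm{v}_y/\mu)\). This ball is contained in the horoball through \(y\) centered at the boundary point in direction \(-v\). Intersecting such horoballs with the current localization set shrinks it geometrically: on \(\Hyp^n_\kappa\), a horoball supporting a ball of radius \(r\gg1/\kappa\) at a well-chosen query (for instance the center of the current ball) cuts it down to a ball of radius roughly \(r/2+O(1/\kappa)\), because horospheres diverge exponentially. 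Iterating from \(\overline B(x_0,R)\), after \(O(\log(1+\kappa R))\) queries we obtain a point \(\tilde x_0\) and a radius \(\overline R_\kappa=4/\kappa\) with \(d(\tilde x_0,x^\star)\le\overline R_\kappa\). I would cite Proposition~8 of \citet{CriscitielloKim2025} for the contraction constant, and verify that the constant \(4\) matches their final radius. All localization queries are centers of nested sets inside \(\overline B(x^\star,2R)\), hence inside \(B_{\mathrm h}^{\beta_p}\) since \(\beta_p\ge1\). If needed, \(\beta_p\) can be taken at least \(2\).

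\emph{Restart phase.} The restriction of \(f\) to \(\overline B(x^\star,\beta_p\overline R_\kappa)\subseteq B_{\mathrm h}^{\beta_p}\) remains \(\mu\)-strongly \(h\)-convex with \(v=\grad f(y)\), because the minorant inequality restricts to subsets. It also keeps \(L_p\)-Lipschitz \(p\)-th derivative there. Hence \(f\) lies in \(\Fh(\Hyp^n_\kappa;\mu,L_p,\overline R_\kappa)\) with initial point \(\tilde x_0\). Applying \cref{thm:h-pth-oracle-upper} gives the first two terms, with \(\Qp\) replaced by \(\overline Q_{p,\kappa}\) and \(\mu R^2\) by \(\mu\overline R_\kappa^2\). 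That theorem also guarantees its iterates stay in \(\overline B(x^\star,\beta_p\overline R_\kappa)\subseteq\overline B(x^\star,\beta_pR)\). This gives the final clause: every query lies in the original enlarged working ball. Summing the query counts of the two phases yields the bound.

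\emph{Main obstacle.} The hard step is the localization phase. Our class only assumes \(h\)-convexity on the ball \(B_{\mathrm h}^{\beta}\), whereas \citet{CriscitielloKim2025} state their result for globally \(h\)-convex functions. The argument therefore must use only the local minorant at query points inside the ball, and must keep every query inside it. I would first check that their proof uses only the supports at queried points together with the inclusion \(x^\star\in\) horoball. Both hold because \(x^\star\in B_{\mathrm h}^{\beta}\) and the queries are kept within distance \(R\) of \(x^\star\).
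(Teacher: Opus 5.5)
Your proposal is correct and follows the paper's proof. It localizes $x^\star$ using horoball supports defined by the gradient, reaching radius $\min\{R,4/\kappa\}$ in $O(\log(1+\kappa R))$ queries, and then reruns \cref{thm:h-pth-oracle-upper} at the localized radius, whose working ball $\overline B(x^\star,\beta_p\overline R_\kappa)$ lies inside the original one because both are centered at $x^\star$. The one step you defer to \citet{CriscitielloKim2025} is proved explicitly in the paper (\cref{lem:h-curvature-localization}): query the center $z_k$ of $\overline B(z_k,r_k)$ and intersect the supporting horoball with the horoball tangent to that ball at its far point in direction $-\grad f(z_k)$; an upper-half-space computation then places $x^\star$ within $\kappa^{-1}\operatorname{arcosh}(e^{\kappa r_k/2})\le e^{-1/4}r_k$ of $\Exp_{z_k}\bigl(-(r_k/2)\grad f(z_k)/\norm{\grad f(z_k)}\bigr)$ when $\kappa r_k\ge4$, so every localization query actually lies in $\overline B(x^\star,R)$, not merely in $\overline B(x^\star,2R)$.
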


The quantity \(\overline Q_{p,\kappa}\) is \(Q_p\) evaluated at the
localized radius \(\overline R_\kappa\), rather than a different condition
measure. Equivalently, \(\overline Q_{p,\kappa}
=\Qp\min\{1,4/(\kappa R)\}^{p-1}\). Thus
\(\overline Q_{p,\kappa}=\Qp\) when \(\kappa R\le4\), whereas
\(\overline Q_{p,\kappa}=4^{p-1}\Qp/(\kappa R)^{p-1}\) when
\(\kappa R\ge4\).
Thus the general-Hadamard guarantee is worst-case curvature-free, while
hyperbolic curvature can make the \(h\)-convex problem easier. More
precisely, after \(O(\log(1+\kappa R))\) localization queries,
\cref{thm:h-curvature-adaptive-upper} replaces the leading term
\(\widetilde O_p(1+\Qp^{2/(3p+1)})\) in
\cref{thm:h-pth-oracle-upper} by
\(\widetilde O_p(1+\overline Q_{p,\kappa}^{2/(3p+1)})\). When
\(\kappa R\ge4\), this is
\(\widetilde O_p(1+[\Qp/(\kappa R)^{p-1}]^{2/(3p+1)})\), up to constants
depending only on \(p\).

The next theorem gives the matching fixed-curvature lower bound on
hyperbolic space.

\begin{theorem}[Hyperbolic \(p\)-th-order \(h\)-convex lower bound]
\label{thm:h-pth-oracle-lower}
Fix \(p\ge2\). Let
\((\mu_{\rm E},L_{p,\rm E},R_{\rm E},\eps)\) satisfy the
Euclidean-transfer regime of \cref{app:euclidean-transfer} and put
\(Q_{p,\rm E}=L_{p,\rm E}R_{\rm E}^{p-1}/\mu_{\rm E}\). For
\(Q_{p,\rm E}\ge q_p\), there is an instance
\(f\in\Fh(\Hyp^n;\mu,L_p,R)\), where \(\Hyp^n\) has sectional
curvature \(-1\), with resulting class parameter
\(\Qp=\Theta_p(1+Q_{p,\rm E})\), such that every deterministic exact
Riemannian \(p\)-th-order method requires
\(\Omega_p(\Qp^{2/(3p+1)})\) queries to return an \(\eps\)-solution. The
dimension satisfies \(n\ge c_p\Qp^{2/(3p+1)}\).
\end{theorem}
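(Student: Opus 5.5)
The plan is to transfer the Euclidean lower bound of \citet{KornowskiShamir2020} to \(\Hyp^n\) through a local chart at scale small relative to the curvature radius. Fix a base point \(o\in\Hyp^n\) and work in normal coordinates \(\phi=\Exp_o:\R^n\to\Hyp^n\). Start from the Euclidean hard instance \(f_{\rm E}\) with parameters \((\mu_{\rm E},L_{p,\rm E},R_{\rm E},\eps)\), which is a chain-type function whose oracle responses reveal at most one new coordinate per query. Choose a spatial scale \(\lambda>0\) (this is what the Euclidean-transfer regime of \cref{app:euclidean-transfer} should provide, together with a rescaling of \(\mu_{\rm E},L_{p,\rm E}\)) so that the rescaled instance lives in a Euclidean ball of radius \(\beta_pR\le c\) with \(c\) a small absolute constant. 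Then define \(f=f_{\rm E}\circ\phi^{-1}+\tfrac{\mu'}{2}\,d(o,\cdot)^2\)-type corrections only if needed. The preferred route avoids corrections: take \(f(x)=\tfrac{\mu}{2}d(x,o)^2+g(\Log_o x)\), where \(g\) is the Kornowski--Shamir chain part. Since \(d(\cdot,o)^2/2\) is the prototypical strongly \(h\)-convex function (its minorants are exact squared distances), \(h\)-convexity should be inherited as long as the Hessian of the chain part is dominated by a fraction of \(\mu\) in the appropriate comparison sense.

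The steps are as follows. (i) On \(\overline B(o,c)\), bound the metric distortion of \(\phi\) and all covariant derivatives of \(\Log_o\) up to order \(p+1\) by \(O_p(1)\) multiples of Euclidean ones. This gives \(L_p\le C_p(L_{p,\rm E}+\mu)\) for the transferred function, so \(\Qp=\Theta_p(1+Q_{p,\rm E})\). (ii) Verify strong \(h\)-convexity on \(B_{\mathrm h}^{\beta}\) with \(v=\grad f(y)\), using a sufficient second-order criterion. On a ball of small radius in \(\Hyp^n\), a function with \(\operatorname{Hess}f\succeq\mu\Id\) and \(\norm{\operatorname{Hess} f}\le M\) is \(\mu'\)-strongly \(h\)-convex with \(\mu'\ge\mu/2\) when the gap between the Hessian and the Hessian of \(\tfrac{\mu}{2}d(\cdot,z)^2\) is controlled. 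I would prove this by comparing \(f\) along geodesics from \(y\) with the model \(Q^{\mu'}_{y,v}\), whose Hessian is \(\mu'\,d\coth d\) in normal directions. (iii) Argue that the oracle of \(f\) at \(x\) is a fixed function of the Euclidean oracle of \(g\) at \(\Log_o x\) and of \(x\) itself. Any deterministic Riemannian method therefore induces a deterministic Euclidean method with the same query count and the same information. The zero-chain/resisting-oracle argument then applies verbatim, provided \(n\ge c_p\Qp^{2/(3p+1)}\) so that the chain fits. (iv) Transfer the accuracy: \(f(\widehat x)-f^\star\le\eps\) implies the Euclidean gap is at most \(O(\eps)\), so the \(\Omega_p(Q_{p,\rm E}^{2/(3p+1)})\) bound yields \(\Omega_p(\Qp^{2/(3p+1)})\), using \(Q_{p,\rm E}\ge q_p\) to absorb the additive \(1\).

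I expect step (ii) to be the main obstacle, because \(h\)-convexity is a global minorant condition with centers \(\Exp_y(-v/\mu)\) that may lie far outside the ball. Curvature amplifies \(d(z,x)^2\) there, so the minorant can overshoot \(f\). I would first try to choose the strong-convexity parameter of the instance as a small fraction of the true Hessian lower bound, say \(\mu=\mu_{\rm E}/4\). I would also keep \(\norm{\grad f}/\mu\) bounded by \(O(\beta_pR)\) on the working ball, so that all centers stay within distance \(O(c)\) of \(o\). Then \(\coth\)-distortion factors are \(1+O(c^2)\), and the minorant inequality reduces to the Euclidean strong convexity of \(f_{\rm E}\) with slack.
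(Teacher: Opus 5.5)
Your construction is, up to dilation, the paper's own. When the Kornowski--Shamir quadratic is centered at the public initial point, \(\tfrac{\mu}{2}d(x,o)^2+g(\Log_o x)=f_{\rm E}\circ\Log_o\), which is the paper's \(f_s=g\circ\Phi_s\). Your steps (iii)--(iv) are the paper's query-for-query simulation with an exact gap identity.

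The genuine gap is in step (ii), at the quantitative point you flag yourself. Your claim that \(\norm{\grad f}/\mu=O(\beta_pR)\) on the working ball is false for this instance. The chain part grows like degree \(p+1\), so on \(\overline B(x^\star,\beta_pR)\) the gradient can be of order \(L_{p,\rm E}R^p\), for example at points displaced from \(x^\star\) along an active chain direction. Thus \(\norm{v}/\mu\) can be of order \((1+\Qp)R\). With only \(\beta_pR\le c\), the centers \(\Exp_y(-v/\mu)\) then lie at distance \(L\approx\Qp c\gg1\) from \(y\) once \(\Qp\gg1/c\), which is exactly the regime of the theorem. There the model \(Q^\mu_{y,v}\) has Hessian \(\mu L\coth L\approx\mu L=\norm{v}\) at \(y\) in directions orthogonal to \(v\). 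The instance, by contrast, has curvature only about \(\mu_{\rm E}\) in directions orthogonal to \(v\) along which the chain is nearly flat, for example along the unexplored tail of the chain. Since \(f-Q^\mu_{y,v}\) vanishes to first order at \(y\), the \(h\)-minorant fails near \(y\).

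Declaring \(\mu=\mu_{\rm E}/4\) cannot rescue this, because the transverse model curvature \(\approx\norm{v}\) is independent of \(\mu\) once \(L\gtrsim1\). For the same reason, your heuristic that the chain Hessian should be dominated by a fraction of \(\mu\) is misdirected. It would force \(\Qp=O(1)\), and convexity of the chain only helps. What must be small is \(\norm{v}/\mu\) measured on the curvature scale.

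The repair is to let the scale depend on the condition parameter, \(\beta_pR\le c_{p,\beta}/(1+Q_{p,\rm E})\); this is the paper's choice \eqref{eq:h-public-scale}. It gives \(\norm{v}/\mu\le2/3\). It also absorbs the Christoffel term \(s^{-1}\inner{\nabla g}{\Gamma_x(U,U)}\) in the chart Hessian, which otherwise requires \((1+\Qp)R^2\) to be small. Then \(\operatorname{Hess}f\succeq2\mu\) yields the minorant, combined with either of two comparisons. One is your along-geodesic comparison: the model's second derivative along the geodesic is at most \(\mu L_t\coth L_t\norm{b}^2\) with \(L_t\le4/3\), and \((4/3)\coth(4/3)<2\). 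The other is the paper's local Gram inequality \eqref{eq:h-local-gram}.

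The smaller scale costs nothing, because the transplant preserves function values exactly and \(\Qp\) is dilation-invariant. Note that this scale is a free public choice, not something the Euclidean-transfer regime supplies.

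Finally, step (i) tacitly needs bounds on the lower-order derivatives \(D^jg\), \(1\le j\le p\), on the ball, uniform over the adaptive rotations, because \(p\)-th-order smoothness does not control them. The paper proves \(\norm{D^jg}\le C_{p,\beta}(L_{p,\rm E}R^{p+1-j}+\mathbf{1}_{\{j\le2\}}\mu_{\rm E}R^{2-j})\) by integrating from \(u^\star\). This is where the additive \(1\) in \(\Qp=\Theta_p(1+Q_{p,\rm E})\) comes from.
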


Taken together,
\cref{thm:h-pth-oracle-upper,thm:h-pth-oracle-lower} give the
Euclidean-optimal \(\Qp\) exponent at fixed curvature, up to logarithmic
factors, for the same exact Riemannian \(p\)-th-order oracle and the same
constant-factor working-ball inflation. The lower bound is a fixed-curvature
statement. It does not oppose the curvature-adaptive upper bound, which
reduces the effective condition parameter when \(\kappa R\) grows.

\subsection{Geodesically convex optimization}

The \(g\)-convex theory has two regimes. Under bounded curvature, matching
upper and lower bounds recover the Euclidean-optimal \(\Qp\) exponent. 
When $\kappa R$ grows on pure hyperbolic space, a separate lower
bound rules out the same uniform exponent over the full \(g\)-convex class.

\begin{theorem}[Bounded-curvature \(g\)-convex upper bound]
\label{thm:g-upper}
Fix \(p\ge2\).  Suppose that \(-\kappa^2\le\Sec_{\Man}\le0\) and
\(\xi_D\le\Xi\) for a fixed constant \(\Xi\ge1\).  There is a
deterministic exact Riemannian \(p\)-th-order method such that, for every
\(f\in\Fg(\Man;\mu,L_p,R)\), it returns an \(\eps\)-solution after
\[
 \widetilde O_p\!\left(
 \xi_D^{2(p+1)/(3p+1)}\Qp^{2/(3p+1)}
 +\xi_D
 +\log\log(e^e+ {\mu R^2}/{\eps})
 \right)
\]
queries.  In particular, for fixed \(\Xi\), the \(\Qp\)-dependence is
\(\widetilde O_{p,\Xi}(\Qp^{2/(3p+1)})\).
\end{theorem}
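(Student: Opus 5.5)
The plan is to port the Euclidean accelerated regularized tensor method of \citet{Nesterov2021}, together with the restart scheme for strong convexity used by \citet{KornowskiShamir2020}, to the Hadamard setting, paying for curvature through the distortion factor \(\zeta_\kappa\) as in \citet[Lemma~5]{ZhangSra2016}. The method keeps all iterates in \(X=\overline B(x_0,2R)\) and all auxiliary or model-minimization queries in \(U\). The argument has three layers. First, we build a local \(p\)-th-order Taylor model. Second, we run an accelerated estimate-sequence scheme over a fixed number of iterations that halves the squared distance to \(x^\star\). Third, we restart this inner scheme geometrically, and switch to a locally quadratically convergent regime once the iterate is close enough.

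\textbf{Step 1 (Taylor model).} At a query point \(y\), define the pullback model \(\Omega_y(v)=\sum_{k\le p}\frac1{k!}\nabla^kf(y)[v,\dots,v]\) on \(T_y\Man\). Using \eqref{eq:p-smoothness} and the Riemannian Taylor remainder bound \citep[Proposition~4.12]{GutmanLobo2026}, we obtain \(\abs{f(\Exp_y v)-\Omega_y(v)}\le \frac{L_p}{(p+1)!}\norm{v}^{p+1}\), and the analogous bound for the gradient, after transporting it back to \(T_y\Man\). Note that the remainder along geodesics is exact, with no curvature term, because \(\frac{d^k}{dt^k}f(\gamma(t))=\nabla^kf[\dot\gamma,\dots]\).

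\textbf{Step 2 (regularized tensor step).} Set \(T_y=\Exp_y(v^\star)\), where \(v^\star\) minimizes \(\Omega_y(v)+\frac{M}{(p+1)!}\norm{v}^{p+1}\) with \(M=pL_p\). This step is computation only and incurs no oracle cost. It yields the standard descent-plus-gradient-norm inequality \(\norm{\grad f(T_y)}\lesssim_p L_p d(y,T_y)^p\) together with \(\inner{\grad f(T_y)}{-\Log_{T_y}y}\gtrsim_p \norm{\grad f(T_y)}^{(p+1)/p}L_p^{-1/p}\).

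\textbf{Step 3 (acceleration).} Define the estimate function \(\psi_k(x)=\frac{1}{2}d(x_0',x)^2\) plus accumulated linearizations \(a_i\bigl[f(x_i)+\inner{\grad f(x_i)}{\Log_{x_i}x}\bigr]\), each pulled back to a common tangent space at the current extrapolation point. Following Zhang--Sra, the squared distance is \(\zeta\)-smooth and \(1\)-strongly convex on \(X\). Comparing quadratics across tangent spaces then costs the factor \(\xi_D\): the estimate-sequence recursion holds with step constants shrunk by \(\xi_D\). This gives \(f(x_k)-f^\star\le C_p\,\xi_D^{p+1}L_pD^{p+1}/k^{(3p+1)/2}\) after replacing Nesterov's Euclidean \(k^{p+1}\) rate. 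To reach the optimal exponent \(3p+1\) rather than \(p+1\), we would instead use the near-optimal (Monteiro--Svaiter/Gasnikov-type) line-search variant with a binary search on \(\lambda_k\), which accounts for the \(\widetilde O\). The \(\xi_D\) inflation of the Lyapunov constants enters the rate as \(\xi_D^{p+1}\).

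\textbf{Step 4 (restart).} Combine the rate with \eqref{eq:gap-distance}. Starting from \(d(x,x^\star)\le r\), after \(k\asymp(\xi_D^{p+1}L_pr^{p-1}/\mu)^{2/(3p+1)}\) steps we have \(d\le r/2\). Restarting at the output, the radii decay geometrically, so the sum over epochs is dominated by the first epoch. This gives \(\xi_D^{2(p+1)/(3p+1)}\Qp^{2/(3p+1)}\), plus an \(O(1)\) cost per epoch. Once \(L_pr^{p-1}\lesssim\mu\), the plain tensor step contracts superlinearly with order \(p\), giving the \(\log\log(\mu R^2/\eps)\) term. The additive \(\xi_D\) term covers the epochs where the Lyapunov argument forces \(k\gtrsim\xi_D\).

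\textbf{Step 5 (feasibility).} We must show that iterates stay in \(X\) and queries stay in \(U\). To do so, project each extrapolation point onto \(X\); projection onto a convex ball is nonexpansive on Hadamard manifolds, so the Lyapunov argument survives. Then bound the tensor step length by \(O(R)\) using strong convexity, so that queries remain within \(10R\).

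The main obstacle is Step 3. We need a Riemannian near-optimal higher-order acceleration whose only curvature price is polynomial in \(\xi_D\). Specifically, we must show that the extrapolation and \(\lambda_k\)-search inequality \(\lambda_k L_p d(y_k,x_{k+1})^{p-1}\in[c_1,c_2]\) coexists with the \(\xi_D\)-weakened estimate sequence. The first approach to try is to run the Monteiro--Svaiter large-step framework with Zhang--Sra's inequality \(\norm{\Log_x z}^2\le\norm{\Log_y z}^2-2\inner{\Log_y x}{\Log_y z}+\zeta\norm{\Log_y x}^2\) replacing the Euclidean three-point identity, and to absorb \(\zeta\) by taking step weights \(a_k\) smaller by a factor \(\xi_D\).
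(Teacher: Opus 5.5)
There is a genuine gap, and it starts at Step~2. The \(v\)-gradient of your model \(\Omega_y\) is the gradient of the Taylor polynomial of the pullback \(f\circ\Exp_y\), so it involves the symmetrized tensors \(\operatorname{Sym}\nabla^kf(y)\). What Step~1 controls by \(L_p\), with no curvature term, is a different quantity: \(\PT_{T_y\to y}\grad f(T_y)-\mathcal G_{p,y}(v)\), where \(\mathcal G_{p,y}(v)=\sum_{j<p}(\nabla^j\grad f)_y[v^{\otimes j}]/j!\) keeps the gradient in the last, unsymmetrized slot. For \(p\ge3\) these two objects differ by Ricci-identity terms. Already at third order the discrepancy is \(\tfrac16\,df_y(R(\cdot,v)v)\) (up to sign convention), which has size \(\kappa^2\norm{\grad f(y)}\norm{v}^2\). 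The class gives no bound on \(\grad f\) in terms of \(L_p\). Consequently, stationarity of your regularized model yields neither \(\norm{\grad f(T_y)}\lesssim_pL_pd(y,T_y)^p\) nor the inner-product inequality. Reading the step instead as a Euclidean tensor step on the pullback does not help. It would need pullback smoothness constants involving derivatives of the curvature tensor and lower-order derivatives of \(f\), and only \(-\kappa^2\le\Sec\le0\) is assumed. A second problem is that your step is unconstrained, while \(f\) is \(g\)-convex only on \(X\). If \(T_y\notin X\), both the linearization lower bound at \(T_y\) and your ``length \(O(R)\) by strong convexity'' are unavailable, and projecting \(T_y\) afterwards destroys its gradient relation. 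The paper's device is the selector in \cref{lem:g-selector}. It takes the point of \(X\) closest to the query \(w\) that satisfies the covariant residual inequality \(\inf_{n\in N_X(y)}\norm{\mathcal G_{p,w}(\Log_wy)+\PT_{y\to w}(-\lambda^{-1}\Log_yx+n)}\le\frac{L_p}{p!}d(w,y)^p\). The exact constrained proximal point is a feasible witness. Existence and the relative-error bounds therefore follow from the curvature-free remainder alone, and iterating the selector gives an inexact constrained prox step of any prescribed relative accuracy.

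Step~3, which you rightly call the main obstacle, is left as a plan, and the estimate function you describe does not work as stated. The map \(x\mapsto\inner{\grad f(x_i)}{\Log_{x_i}x}\) is neither affine nor convex on a curved Hadamard manifold. Moving it to a common tangent space also incurs distortion, which is controlled only if every relevant point stays in a bounded set. The missing construction is the projected scheme of \citet{MartinezRubioPokutta2023}, which has three parts:
\begin{itemize}
\item a dual vector \(u_k\in T_{y_k}\Man\), projected onto \(\overline B(0,D)\); it is this projection, not projecting points onto \(X\), that keeps the coupling points in \(U\), and it does not increase the potential because \(\norm{\Log_{y}x^\star}\le D\);
\item the potential \(A_k(f(y_k)-f^\star)+\frac12\norm{u_k-\Log_{y_k}x^\star}^2+\frac{\xi-1}2\norm{u_k}^2\);
\item their tangent-space translation inequalities, valid under the weight sandwich \(\frac34\lambda\xi(A+a)\le a^2\le\gamma\lambda(\xi A+a)\) together with \(\xi\lambda/a\le5/6\).
\end{itemize}
The band-versus-sandwich conflict you identify is resolved by parameterizing every trial by a single scalar \(q\in(0,\bar q]\), through \(a=qA\) and \(\lambda=Aq^2/(\gamma(\xi+q))\). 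With this choice the sandwich holds for every trial. A search in \(q\), certified by resolvent nonexpansiveness in the base point and in \(\lambda\), then ends either in the large-step band or at the cap \(\bar q\). Capped steps multiply \(A\) by \(1+\bar q/\xi\), so there are \(O(\xi\log)\) of them per epoch. This, rather than a minimum epoch length, is the source of the additive \(\xi_D\) term. Band steps give \(A_N\gtrsim_pN^{(3p+1)/2}/(L_p\xi^pr^{p-1})\), which is the \(\xi^{p+1}\) rate you anticipated. With these pieces in place, your restart and double-logarithmic tail (Step~4) go through essentially as in the paper, with the tail using one inexact constrained prox step per radius.
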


\begin{theorem}[Bounded-curvature \(g\)-convex lower bound]
\label{thm:g-bounded-curvature-lower}
Fix \(p\ge2\) and \(\Xi>1\). Let
\((\mu_{\rm E},L_{p,\rm E},R_{\rm E},\eps)\) satisfy the
Euclidean-transfer regime of \cref{app:euclidean-transfer} and put
\(Q_{p,\rm E}=L_{p,\rm E}R_{\rm E}^{p-1}/\mu_{\rm E}\). For
\(Q_{p,\rm E}\ge q_{p,\Xi}\), there are Hadamard manifolds of the
form \(\R^m\times\Hyp^2_\kappa\), with \(\xi_D\le\Xi\), and objectives in
the corresponding \(g\)-convex class whose resulting parameter satisfies
\(\Qp=\Theta_{p,\Xi}(1+Q_{p,\rm E})\). Every deterministic exact Riemannian
\(p\)-th-order method requires
\(\Omega_{p,\Xi}(\Qp^{2/(3p+1)})\) queries to return an \(\eps\)-solution,
provided \(m\ge c_p\Qp^{2/(3p+1)}\). For \(\Xi=1\), the same conclusion
holds in Euclidean space.
\end{theorem}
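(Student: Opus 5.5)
We prove \cref{thm:g-bounded-curvature-lower} by embedding a Euclidean hard instance of \citet{KornowskiShamir2020} into the flat factor of \(\Man=\R^m\times\Hyp^2_\kappa\). We choose \(\kappa\) so that the hyperbolic factor only enforces the curvature constraint \(\xi_D\le\Xi\) and carries no information. Concretely, let \(f_{\rm E}:\R^m\to\R\) be the Euclidean hard function from the Euclidean-transfer regime of \cref{app:euclidean-transfer}. It is \(\mu_{\rm E}\)-strongly convex, its \(p\)-th derivative is \(L_{p,\rm E}\)-Lipschitz, its minimizer \(u^\star\) satisfies \(\norm{u_0-u^\star}\le R_{\rm E}\), and every deterministic \(p\)-th-order method needs \(\Omega_p(Q_{p,\rm E}^{2/(3p+1)})\) queries to reach accuracy \(\eps\) once \(m\ge c_pQ_{p,\rm E}^{2/(3p+1)}\).

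\textbf{Construction.} Fix a base point \(o\in\Hyp^2_\kappa\) and set
\[
f(u,z)\defeq f_{\rm E}(u)+\tfrac{\mu_{\rm E}}{2}d_{\Hyp}(o,z)^2,
\]
with initial point \(x_0=(u_0,o)\), \(R=R_{\rm E}\) and \(\mu=\mu_{\rm E}\). Because \(\xi_D=4\zeta_\kappa(2D)-3\) increases continuously in \(\kappa R\) and equals \(1\) at \(\kappa=0\), we pick \(\kappa=c_\Xi/R\) with \(c_\Xi>0\) small enough that \(\xi_D\le\Xi\); here \(D\le 4R\) is the diameter of \(X=\overline B(x_0,2R)\). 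When \(\Xi=1\) we instead take \(\kappa=0\) and drop the hyperbolic factor, which gives the Euclidean case.

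\textbf{Verification of class membership.} The product metric splits the Hessian, so \(f\) is \(\mu\)-strongly \(g\)-convex: on a Hadamard manifold \(\operatorname{Hess}\tfrac12 d(o,\cdot)^2\succeq\Id\). The constrained minimizer is \((u^\star,o)\), at distance at most \(R\) from \(x_0\). The main obstacle is \(p\)-th-order smoothness on \(U=\overline B(x_0,10R)\). Covariant derivatives of \(\tfrac12 d_{\Hyp}(o,\cdot)^2\) of order up to \(p+1\) are smooth and bounded on a ball of radius \(10R\) by constants \(C_p\,\kappa^{p-1}\,g(\kappa R)\), using the explicit form \(\kappa r\coth(\kappa r)\) in geodesic polar coordinates. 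With \(\kappa R=c_\Xi\) fixed, this term is \(\mu R^{1-p}\cdot O_{p,\Xi}(1)\). Derivatives in a product manifold decompose slotwise with no mixed terms, and parallel transport acts factorwise. Hence \(L_p\le L_{p,\rm E}+O_{p,\Xi}(\mu R^{1-p})\), which gives \(\Qp=\Theta_{p,\Xi}(1+Q_{p,\rm E})\).

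\textbf{Reduction.} Take any deterministic method \(\mathcal A\) on \(\Man\) that finds an \(\eps\)-solution of \(f\) in \(T\) queries. We simulate it with a Euclidean method on \(f_{\rm E}\). At a query \((u,z)\), the oracle \(\mathcal O^{(p)}_{(u,z)}f\) is a fixed, known function of \(\mathcal O^{(p)}_u f_{\rm E}\) and of \(z\), because the hyperbolic part is public. So the Euclidean method queries \(u\) and computes the rest itself. For the output \((\widehat u,\widehat z)\) we have \(f(\widehat u,\widehat z)-f^\star\ge f_{\rm E}(\widehat u)-f_{\rm E}^\star\), so \(\widehat u\) is an \(\eps\)-solution of \(f_{\rm E}\). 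The output also lies in \(X\) or can be projected onto it.

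\textbf{Conclusion and remaining care.} The reduction gives \(T=\Omega_p(Q_{p,\rm E}^{2/(3p+1)})=\Omega_{p,\Xi}(\Qp^{2/(3p+1)})\) once \(Q_{p,\rm E}\ge q_{p,\Xi}\), where \(q_{p,\Xi}\) is large enough that the additive \(O_{p,\Xi}(1)\) in \(\Qp\) is dominated. Two points need checking. First, the Euclidean hard instance must have its smoothness certified on all of \(U\), a ball of radius \(10R\); we assume the transfer regime provides this. Second, the Euclidean lower bound applies to methods whose queries are restricted to a ball; projecting the queries onto the \(u\)-coordinate keeps them within distance \(10R\) of \(u_0\), so this restriction is respected.
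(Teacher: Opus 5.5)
Your proposal is correct and follows essentially the same route as the paper. Both use the product $\R^m\times\Hyp^2_\kappa$ with $\kappa R=c_\Xi$, the objective $f_{\mathrm E}(u)+\frac{\mu}{2}d(o,z)^2$, the factorwise smoothness bound obtained by rescaling ($\mu\kappa^{p-1}\lesssim_{p,\Xi}\mu R^{1-p}$, so $\Qp=Q_{p,\mathrm E}+O_{p,\Xi}(1)$), and a query-for-query simulation with $f-f^\star\ge f_{\mathrm E}-f_{\mathrm E}^\star$. Your two closing caveats resolve automatically: the Kornowski--Shamir instance has a globally Lipschitz $p$-th derivative, and their lower bound holds for arbitrary query sequences, so it applies a fortiori to the simulated method.
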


\cref{thm:g-upper,thm:g-bounded-curvature-lower} establish optimality only in the
\(\Qp\) exponent for fixed \(\Xi\).  They do not claim that the displayed
dependence on \(\xi_D\) in \cref{thm:g-upper} is optimal.

\begin{theorem}[Growing-curvature lower bound]
\label{thm:g-growing-curvature-lower}
Fix \(p\ge2\).  For every sufficiently large \(\kappa R\), there is a
\(\mu\)-strongly \(g\)-convex exact Riemannian \(p\)-th-order hard family in
\(\Fg(\Hyp^2_\kappa;\mu,L_p,R)\) with the valid class budget
\(L_p=C_p\mu\kappa^{p-1}(1+\kappa R)\) and
\(\Qp\asymp_p(1+\kappa R)^p\),
such that every deterministic method making at most
\(c_p\kappa R/\log(2+\kappa R)\) queries has an instance on which its output is at distance at least
\(R/4\) from the minimizer.  Hence, for every
\(\eps<\mu R^2/32\), the query complexity is
\[\Omega_p(\kappa R/\log(2+\kappa R))
=\widetilde\Omega_p(\Qp^{1/p})\]
\end{theorem}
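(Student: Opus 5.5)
We adapt the first-order hyperbolic lower-bound strategy of \citet{CriscitielloBoumal2022,CriscitielloBoumal2023} to $p$-th-order oracles and then verify that the hard instances lie in $\Fg(\Hyp^2_\kappa;\mu,L_p,R)$ with the stated budget. By scaling we set $\kappa=1$ and write $r\defeq\kappa R$. The instances are indexed by a hidden direction $\theta$ in a finite set $\Theta$ of angles on the circle of radius $R/2$ around $x_0$. Each instance is
\[
f_\theta(x)=\frac{\mu}{2}\,d(x,x_0)^2+\mu\,\phi_\theta(x),
\]
where $\phi_\theta$ is a smooth bump supported in a thin region. That region is a neighbourhood of the geodesic from $x_0$ toward the point $z_\theta$ at distance about $R/2$, thickened only near its far end. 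The bump is built from the squared distance to $z_\theta$, cut off by a smooth function of Busemann or distance coordinates, so that it tilts the minimizer from $x_0$ to near $z_\theta$.

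The central geometric fact is hyperbolic divergence. Geodesic sectors of angular width $\delta$ at distance $\rho$ from $x_0$ have transverse width about $\delta\sinh\rho$. With $|\Theta|\approx e^{cr}$ directions, the bumps for different $\theta$ therefore have disjoint supports at distance $\gtrsim R/4$, while their transverse width $w$ stays of order $1/\kappa$. Since $r$ is large, $e^{cr}$ directions fit with room to spare.

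\textbf{Step 1 (regularity and convexity).} Using $\operatorname{Hess}\tfrac12 d(\cdot,x_0)^2\succeq \Id$, we show that $f_\theta$ is $\mu$-strongly $g$-convex on $X$ once the bump's Hessian is dominated, and adjust by an extra $\mu$ factor if needed. On $U$, the $p$-th covariant derivative of $\tfrac12 d^2$ on $\Hyp^2$ grows like $(1+\rho)$ times $\kappa^{p-1}$ up to constants, because it involves $\rho\coth\rho$ and its derivatives. The bump, living at scale $1/\kappa$ with amplitude $O(R^2)$ in $\mu$ units and shape controlled by $\kappa$, contributes $L_p\lesssim\mu\kappa^{p-1}(1+r)$ once its amplitude is chosen carefully. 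This yields $L_p=C_p\mu\kappa^{p-1}(1+\kappa R)$ and $\Qp\asymp_p(1+r)^p$. Ensuring the bump keeps strong convexity while moving the minimizer a distance $R/4$ is delicate. Our first attempt is to let $\phi_\theta$ be a convex function of the signed Busemann coordinate toward $z_\theta$, multiplied by a transverse cutoff. The cutoff's convexity defect is then absorbed by the large Hessian $\rho\coth\rho\approx\rho$ of $\tfrac12 d^2$ at radius $\rho\sim R$. This absorption is exactly where the factor $(1+\kappa R)$ in $L_p$ is paid.

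\textbf{Step 2 (resisting oracle).} We use an adversary argument. Each query at $x$ outside a support returns the oracle of the base function, which is independent of $\theta$. A query reveals whether $x$ meets the support of some $\phi_\theta$, and the adversary eliminates only those directions whose supports meet a ball of radius $O(1/\kappa)$ around $x$. A geometric covering lemma bounds by a constant the number of angular sectors at radius $\ge R/4$ that such a ball can meet. Because our sets are thickened only at the far end, this count is what gives the $\log(2+r)$ loss; we will allow $O(\log(2+r))$ sectors per query.

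\textbf{Step 3 (conclusion).} To obtain the $\kappa R/\log$ rate rather than an exponential one, we fall back on the nested construction of \citet{CriscitielloBoumal2023}, which is the main obstacle. In that construction, the hidden information is a sequence of $k\approx c\,r/\log(2+r)$ nested choices along a path, each costing $\log(2+r)$ distance. Each query can reveal at most one level, so after fewer than $k$ queries at least two candidate instances remain consistent. Their minimizers lie on distinct branches separated by $\ge R/2$. Hence some instance has output at distance $\ge R/4$ from its minimizer, and by \eqref{eq:gap-distance}, $f-f^\star\ge\mu R^2/32>\eps$. Finally, $r\asymp\Qp^{1/p}$ converts the bound to $\widetilde\Omega_p(\Qp^{1/p})$.
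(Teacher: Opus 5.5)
There is a genuine gap at the foundation of Steps~1--2: a bump with thin support cannot move the minimizer of a strongly $g$-convex function by $\Theta(R)$. Suppose $f_\theta$ is $\mu$-strongly $g$-convex on $X$ with interior minimizer $x^\star_\theta$. Adding the two strong-convexity inequalities between $x$ and $x^\star_\theta$ gives $\inner{\grad f_\theta(x)}{\Log_x x^\star_\theta}\le-\mu\,d(x,x^\star_\theta)^2$, hence $\norm{\grad f_\theta(x)}\ge\mu\,d(x,x^\star_\theta)$ on $X$. Off $\operatorname{supp}\phi_\theta$ your instance equals $\frac\mu2 d(\cdot,x_0)^2$, whose gradient has norm $\mu\,d(x,x_0)$. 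Therefore every $x$ with $d(x,x_0)<\frac12 d(x_0,x^\star_\theta)\approx R/4$ lies in $\operatorname{supp}\phi_\theta$, and this holds for \emph{every} $\theta$. Using a strong-convexity parameter $\mu/C$ only shrinks this radius to about $R/(2(C+1))$. So near $x_0$ the supports are not tubes of width $O(1/\kappa)$. They all contain a common ball of radius $\Omega(R)$, and a query there returns $\theta$-dependent $p$-jets. The ``base oracle outside the supports'' mechanism of Step~2 never controls such queries. Taken at face value, Step~2 would discard only $O(\log r)$ of $e^{cr}$ directions per query and so prove an exponential lower bound. That is a sign the construction cannot exist, and presumably why you abandon it in Step~3.

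Step~3 does not supply a replacement. The claim that ``each query can reveal at most one level'' of a nested construction is asserted without a mechanism. For an oracle returning the full $p$-th-order reply, the whole difficulty is to make all surviving candidates return \emph{identical} covariant derivative data through order $p$ at every query, while keeping strong convexity and the $L_p$ budget. Nothing in the proposal does this, or explains where the $\log(2+r)$ per level comes from.

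The missing idea is the paper's resisting oracle (\cref{lem:g-unit-resisting-oracle}). The candidates are the unperturbed $\frac12 d(\cdot,z_j)^2$ for an $r/2$-separated packing of $N\ge e^{cr}$ points in $\overline B(x_0,3r/4)$. At each query one discards the at most one $z_j$ within $r/4$. One then bounds the spread of the remaining candidates' jets, either absolutely ($O(r^2)$ for values, $O(r)$ for derivatives) or by $C_pr\,\delta_k^{p+1-m}$ via Taylor when the query is $\delta_k$-close to an earlier query where all jets agree. \Cref{lem:g-local-interpolation} realizes any jet correction in the box $c_p\min\{\eta\rho_k^{2-m},\Lambda\rho_k^{p+1-m}\}$ by a bump in $B(x_k,\rho_k)$ with $\norm{\nabla^2h}_{\mathrm{op}}\le\eta$ and $\norm{\nabla^{p+1}h}_{\mathrm{op}}\le\Lambda$. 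A volume pigeonhole in the $M_p$-dimensional jet space then yields a common reply shared by at least a $(C_pr^2T_0)^{-M_p}$ fraction of candidates. This polynomial-in-$r$ loss per query, set against $e^{cr}$ candidates, is what produces $r/\log(2+r)$. The budgets $\eta=1/(4T_0)$ and $\Lambda=r/T_0$ keep the final objective $1/2$-strongly convex with $\nabla^p$-Lipschitz constant $C_pr$, which rescales to $L_p=C_p\mu\kappa^{p-1}(1+\kappa R)$.
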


The smoothness constant in \cref{thm:g-growing-curvature-lower} is a valid
upper bound and need not be minimal. The \(\Qp^{1/p}\) form rewrites the
bound for the family with \(\Qp\asymp_p(1+\kappa R)^p\). 
Since \(1/p>2/(3p+1)\), the result rules out a curvature-independent extension
of the bounded-curvature rate. It does not give a matching growing-curvature
upper bound. The hard family is not contained in the strongly \(h\)-convex
class with the corresponding constant-factor working region. Otherwise,
\cref{thm:h-pth-oracle-upper} would give
\(\widetilde O_p((\kappa R)^{2p/(3p+1)})\) queries along the balanced
scaling above, contradicting the
\(\widetilde\Omega_p(\kappa R)\) lower bound because \(2p/(3p+1)<1\).
The unperturbed squared-distance objectives are strongly \(h\)-convex. Thus
the separation is created by the resisting perturbations.

\section{Horospherically convex optimization}
\label{sec:h-convex}

The upper-bound proofs use Busemann functions as intrinsic affine
minorants. Let \(\gamma:[0,\infty)\to\Man\) be a geodesic ray parametrized by
arclength, meaning that
\(\norm{\gamma'(t)}_{\gamma(t)}=1\) for every \(t\ge0\). Its associated
Busemann function is
\(b_\gamma(z)\defeq\lim_{t\to\infty}(d(z,\gamma(t))-t)\).
This function is geodesically convex
\citep[Section~2.3]{CriscitielloKim2025}. If \(\gamma(0)=y\), the
squared-distance inequality applied to \(\gamma(T)\) and then passed to the
limit gives
\begin{equation}
\label{eq:busemann-basepoint}
 b_\gamma(z)\ge
 -\inner{\gamma'(0)}{\Log_yz}_y
 \qquad (z\in\Man).
\end{equation}

\begin{lemma}[Busemann minorant]
\label{lem:h-busemann-minorant}
Let \(f\) be \(\mu\)-strongly \(h\)-convex on
\(B_{\mathrm h}^{\beta}\) and let \(y\in B_{\mathrm h}^{\beta}\). Put
\(v=\grad f(y)\). If \(v\ne0\), define
\(\gamma_y(t)\defeq\Exp_y(-tv/\norm{v}_y)\) and \(b_y\defeq b_{\gamma_y}\).
Then \(m_y(z)\defeq f(y)+\norm{v}\,b_y(z)\le f(z)\) for every
\(z\in B_{\mathrm h}^{\beta}\).
\end{lemma}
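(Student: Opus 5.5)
Rather than argue through the tangent space, I will show that the squared-distance model in \eqref{eq:h-minorant} already dominates the Busemann affine function pointwise. By the definition of the class, for every \(z\in B_{\mathrm h}^{\beta}\) we have \(f(z)-f(y)\ge Q_{y,v}^\mu(z)\) with \(v=\grad f(y)\). So the lemma follows once we prove
\[
 Q_{y,v}^\mu(z)\ \ge\ \norm{v}_y\, b_y(z)\qquad (z\in\Man).
\]
Write \(s\defeq\norm{v}_y/\mu\) and \(c\defeq\Exp_y(-v/\mu)=\gamma_y(s)\). The center of the model is then a point on the ray \(\gamma_y\), and
\[
 Q_{y,v}^\mu(z)=\frac{\mu}{2}\bigl(d(z,\gamma_y(s))^2-s^2\bigr).
\]

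The key inequality is \(d(z,\gamma_y(s))-s\ge b_y(z)\), valid for every \(s\ge0\). To see it, note that the map \(t\mapsto d(z,\gamma_y(t))-t\) is nonincreasing by the triangle inequality, since \(d(z,\gamma_y(t'))\le d(z,\gamma_y(t))+(t'-t)\) for \(t'\ge t\). Hence each value is at least the limit \(b_y(z)\).

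With this in hand, we split into cases. Put \(a\defeq d(z,c)\ge0\) and \(\delta\defeq a-s\ge b_y(z)\), and note \(d^2-s^2=(a-s)(a+s)=\delta(\delta+2s)\).

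If \(\delta\ge0\), then \(\delta(\delta+2s)\ge 2s\delta\). This gives
\[
 \frac{\mu}{2}(a^2-s^2)\ge\mu s\delta=\norm{v}_y\delta\ge\norm{v}_y b_y(z).
\]

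If \(\delta<0\), the same conclusion holds because \(\delta^2\ge0\) gives \(\delta(\delta+2s)\ge2s\delta\) in every case. Thus
\[
 Q_{y,v}^\mu(z)\ge\mu s\,\delta=\norm{v}_y\bigl(d(z,\gamma_y(s))-s\bigr)\ge\norm{v}_y\, b_y(z).
\]

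Adding \(f(y)\) to both sides yields \(m_y(z)\le f(z)\). This argument does not even need \eqref{eq:busemann-basepoint} or convexity of \(b_y\).

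The only point requiring care is the monotonicity of \(t\mapsto d(z,\gamma(t))-t\). This monotonicity is what lets us compare the model at the finite center \(c\) with the Busemann limit. It is elementary, because \(\gamma_y\) is a unit-speed ray, so \(d(\gamma_y(t),\gamma_y(t'))=t'-t\) on a Hadamard manifold. Since the minorant inequality is used pointwise and the model center may lie outside \(B_{\mathrm h}^{\beta}\), no restriction on \(c\) is needed.
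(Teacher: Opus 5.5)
Your proof is correct and follows the paper's route. Like the paper, you identify the model center \(\Exp_y(-v/\mu)\) with \(\gamma_y(s)\) for \(s=\norm{v}_y/\mu\), and you use the monotonicity of \(t\mapsto d(z,\gamma_y(t))-t\) to pass to \(b_y(z)\). The one difference is small but cleaner: you apply \(a^2-s^2\ge 2s(a-s)\) to the actual displacement before replacing \(a-s\) by \(b_y(z)\), which removes the paper's case split on the sign of \(L+b_y(z)\) and also makes your own split on the sign of \(\delta\) redundant.
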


\begin{proof}
Set \(L=\norm{v}/\mu\). Strong \(h\)-convexity gives
\(f(z)-f(y)\ge \frac{\mu}{2}(d(\gamma_y(L),z)^2-L^2)\).
The function \(t\mapsto d(\gamma_y(t),z)-t\) is nonincreasing. Hence
\(d(\gamma_y(L),z)-L\ge b_y(z)\). If \(L+b_y(z)\ge0\), then
\[
 \frac{\mu}{2}\bigl(d(\gamma_y(L),z)^2-L^2\bigr)
 \ge \mu Lb_y(z)+\frac{\mu}{2}b_y(z)^2
 \ge \norm{v}\,b_y(z).
\]
If \(L+b_y(z)<0\), the right-hand side is smaller than
\(-\mu L^2\), while the left-hand side is at least
\(-\mu L^2/2\). The same conclusion follows.
\end{proof}

This finite-\(\mu\) comparison agrees with the Busemann limit of the strong
\(h\)-convex lower model
\citep[Proposition~1]{CriscitielloKim2025}.

\subsection{\texorpdfstring{\(p\)-th-order oracle upper bound}{p-th-order oracle upper bound}}
\label{sec:h-pth-oracle-upper}

This subsection works on an arbitrary Hadamard manifold \(\Man\). The
Euclidean higher-order oracle rate is due to \citet{KornowskiShamir2020}. We
adapt the corresponding proof framework to Hadamard manifolds by replacing
Euclidean affine minorants with Busemann minorants in an intrinsic estimate
sequence. The matching lower bound remains specific to hyperbolic space.

The method couples a Busemann estimate sequence with points satisfying a
covariant Taylor-residual condition. For \(u\in\Man\), define
\({\cal G}_{p,u}(s)\defeq
\sum_{j=0}^{p-1}(\nabla^j\grad f)_u[s^{\otimes j}]/j!\) and let
\begin{equation}
\label{eq:h-residual-selector}
 {\cal S}(u,\lambda)\in\argmin_{x\in\Man}
 \left\{d(u,x)\,\middle|\,
 \left\|{\cal G}_{p,u}(\Log_u x)
 -\PT_{x\to u}\!\left(\frac{\Log_xu}{\lambda}\right)\right\|_u
 \le\frac{L_p}{p!}d(u,x)^p\right\},
\end{equation}
using a fixed public deterministic tie-breaking rule.
Fix \(\delta_p=1/(8p)\). At \(k=0\), start from \(\lambda=1/\mu\), halve
until the statistic \(\lambda L_p\rho^{p-1}\) is at most \(\theta_+\), then
double to obtain a low--high bracket and bisect until
\(\lambda_L/\lambda_H\ge1-\delta_p\). At \(k\ge1\), restrict
\(\alpha_k\le\bar\alpha_k\defeq
1-(1+1/k)^{-(3p+1)/2}\). Accept the cap when its statistic is at most
\(\theta_+\). Otherwise halve \(\alpha\) to obtain a low--high bracket and
bisect until \(\lambda_L/\lambda_H\ge1-\delta_p\) and
\(2|\alpha_H-\alpha_L|d(x_k,\omega_k)\le\delta_p\rho_H\).
The proof of \cref{prop:h-pth-epoch} shows that the accepted low endpoint
satisfies \(\theta_-\le\lambda_kL_p\rho_k^{p-1}\le\theta_+\), for a fixed
\(\theta_->0\) depending only on \(p\). The full method restarts these
epochs while the localized condition parameter is above a fixed constant
depending only on \(p\). It then switches to the prescribed local step in
\cref{lem:h-local-step}. We first state the epoch procedure.

\begin{algorithm}[H]
\caption{Riemannian \(p\)-th-order epoch for \(h\)-convex optimization}
\label{alg:h-pth-epoch}
\begin{algorithmic}
  \Require Epoch center \(z\), accepted-step budget \(N\), and
  \(0<\theta_-<\theta_+\) with \(2\theta_+/p!\le1/(p+1)\)
  \State \(A_0\gets0\), \(x_0\gets\omega_0\gets z\)
  \For{\(k=0,\ldots,N-1\)}
    \State Choose \(a_{k+1}>0\) by the parameter search above
    \State \(A_{k+1}\gets A_k+a_{k+1}\),
    \(\alpha_k\gets a_{k+1}/A_{k+1}\),
    \(\lambda_k\gets a_{k+1}^2/A_{k+1}\)
    \State \(\widetilde y_k\gets\gamma_{x_k,\omega_k}(\alpha_k)\),
    \(\widetilde x_k\gets{\cal S}(\widetilde y_k,\lambda_k)\)
    \State \(\rho_k\gets d(\widetilde y_k,\widetilde x_k)\),
    \(w_k\gets\lambda_k^{-1}\Log_{\widetilde x_k}\widetilde y_k\)
    \State Query \(v_k\gets\grad f(\widetilde x_k)\)
    \State If \(v_k=0\), \textbf{return} \(\widetilde x_k\)
    \State \(\eta_k(t)\gets
    \Exp_{\widetilde x_k}(-tv_k/\norm{v_k})\),
    \(b_k\gets b_{\eta_k}\)
    \State \(\omega_{k+1}\gets\argmin_{x\in\Man}
    \{d(x,\omega_k)^2/2+a_{k+1}\norm{v_k}b_k(x)\}\)
    \State \(x_{k+1}\gets\widetilde x_k\)
  \EndFor
  \State \textbf{return} \(x_N\)
\end{algorithmic}
\end{algorithm}

The next lemma shows that \(\mathcal S\) is well-defined and satisfies the
relative-error condition used below.

\begin{lemma}[Relative-error condition]
\label{lem:h-residual-step}
Let \(u\in B_{\mathrm h}^{\beta}\), let \(\lambda>0\), and let
\(\widehat x\in\argmin_{x\in B_{\mathrm h}^{\beta}}
\{f(x)+d(u,x)^2/(2\lambda)\}\).
Suppose that \(\widehat x\) lies in the interior of
\(B_{\mathrm h}^{\beta}\) and
\(\overline B(u,d(u,\widehat x))\subseteq B_{\mathrm h}^{\beta}\).
Then the minimum in \eqref{eq:h-residual-selector} is attained. If
\(\widetilde x={\cal S}(u,\lambda)\), then, after querying
\(v=\grad f(\widetilde x)\) and putting
\(\rho\defeq d(u,\widetilde x)\) and
\(w\defeq\lambda^{-1}\Log_{\widetilde x}u\),
we have
\begin{equation}
\label{eq:h-relative-residual}
 \norm{v-w}
 \le\frac{2L_p}{p!}\rho^p
 =\sigma\norm{w},
 \qquad
 \sigma\defeq\frac{2\lambda L_p\rho^{p-1}}{p!}.
\end{equation}
\end{lemma}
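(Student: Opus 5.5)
The plan has three steps: show that the feasible set in \eqref{eq:h-residual-selector} is nonempty and closed, show that the minimum is attained, and then derive \eqref{eq:h-relative-residual} from the triangle inequality plus a covariant Taylor remainder bound for \(\grad f\).

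\emph{Step 1: \(\widehat x\) is feasible.} Since \(\widehat x\) is an interior minimizer of \(f+d(u,\cdot)^2/(2\lambda)\), first-order optimality gives
\[
\grad f(\widehat x)=\lambda^{-1}\Log_{\widehat x}u,
\]
because \(\grad\bigl(\tfrac12 d(u,\cdot)^2\bigr)(x)=-\Log_x u\) on a Hadamard manifold. Next I invoke the Riemannian Taylor remainder for the gradient under \eqref{eq:p-smoothness}, as in \citep[Proposition~4.12]{GutmanLobo2026}. For \(x\) with the geodesic \(\gamma_{u,x}\subseteq B_{\mathrm h}^{\beta}\),
\[
\bigl\|\PT_{x\to u}\grad f(x)-{\cal G}_{p,u}(\Log_u x)\bigr\|_u\le\frac{L_p}{(p-1)!\,p}\,d(u,x)^p=\frac{L_p}{p!}\,d(u,x)^p .
\]
To prove this, I would write \(\PT_{\gamma(t)\to u}\grad f(\gamma(t))\) along \(\gamma(t)=\Exp_u(t\Log_u x)\). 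Its \(j\)-th derivative in \(t\) is the transported \(\nabla^{j}\grad f\) evaluated on \(\dot\gamma^{\otimes j}\), since \(\dot\gamma\) is parallel. The integral form of the one-variable Taylor remainder of order \(p-1\), combined with the \(L_p\)-Lipschitz bound along the geodesic, gives the factor \(L_p\rho^p\int_0^1(1-t)^{p-2}t\,dt/(p-2)!=L_p\rho^p/p!\). The hypothesis \(\overline B(u,d(u,\widehat x))\subseteq B_{\mathrm h}^{\beta}\), together with convexity of balls, guarantees the geodesic from \(u\) to \(\widehat x\) stays in the smoothness region. Substituting \(\grad f(\widehat x)=\PT_{\widehat x\to u}^{-1}\) of \(\PT_{\widehat x\to u}(\Log_{\widehat x}u/\lambda)\) shows \(\widehat x\) satisfies the constraint.

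\emph{Step 2: attainment.} The feasible set is nonempty by Step 1, and it is closed because all maps involved are continuous. So its intersection with \(\overline B(u,d(u,\widehat x))\) is compact and nonempty, and \(d(u,\cdot)\) attains its minimum there. In particular \(\widetilde x\) lies in \(\overline B(u,d(u,\widehat x))\subseteq B_{\mathrm h}^{\beta}\), and therefore the Taylor estimate applies at \(\widetilde x\) as well.

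\emph{Step 3: the residual bound.} Set \(\rho=d(u,\widetilde x)\) and write, with \(\PT=\PT_{\widetilde x\to u}\) an isometry,
\[
\norm{v-w}=\norm{\PT v-\PT w}_u\le\norm{\PT v-{\cal G}_{p,u}(\Log_u\widetilde x)}_u+\norm{{\cal G}_{p,u}(\Log_u\widetilde x)-\PT w}_u .
\]
The first term is at most \(L_p\rho^p/p!\) by the Taylor bound. The second term is at most \(L_p\rho^p/p!\) by feasibility of \(\widetilde x\). Hence \(\norm{v-w}\le 2L_p\rho^p/p!\). Finally, \(\norm{w}=\rho/\lambda\), so \(2L_p\rho^p/p!=\sigma\norm w\) with \(\sigma=2\lambda L_p\rho^{p-1}/p!\).

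The main obstacle is the Taylor remainder with the exact constant \(L_p/p!\) in the Riemannian setting. It requires that the parallel-transported derivatives along the geodesic reproduce \({\cal G}_{p,u}\), which uses the ordered covariant derivative convention and the fact that \(\dot\gamma\) is parallel. If needed, I will cite \citep[Proposition~4.12]{GutmanLobo2026} directly for this estimate.
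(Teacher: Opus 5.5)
Your proposal is correct and follows essentially the same route as the paper. You use \(\widehat x\) as a feasible witness via the interior optimality condition and the covariant Taylor remainder, obtain attainment by compactness of the feasible set intersected with \(\overline B(u,d(u,\widehat x))\), and conclude by combining a second Taylor bound at \(\widetilde x\) with the residual constraint through the triangle inequality; the paper's extra remark on resolvent nonexpansiveness is not needed for this lemma.
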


\begin{proof}
The point \(\widehat x\) is used only as a witness. The strong
\(h\)-convexity inequality implies strong
\(g\)-convexity with the same parameter. Resolvent nonexpansiveness
\citep{Bacak2014} gives \(d(\widehat x,x^\star)\le d(u,x^\star)\).
Since \(\widehat x\) is interior,
\(\grad f(\widehat x)=\lambda^{-1}\Log_{\widehat x}u\).
The covariant Taylor remainder along the geodesic from \(u\) to
\(\widehat x\) gives
\[
 \norm{
 \PT_{\widehat x\to u}\grad f(\widehat x)
 -{\cal G}_{p,u}(\Log_u\widehat x)
 }_u
 \le\frac{L_p}{p!}d(u,\widehat x)^p.
\]
Thus \(\widehat x\) is feasible for \eqref{eq:h-residual-selector}. The feasible set is closed. Its
intersection with
\(\overline B(u,d(u,\widehat x))\) is compact by Hopf--Rinow, so a closest
feasible point exists and
\begin{equation}
\label{eq:h-witness-distance}
 d(u,\widetilde x)\le d(u,\widehat x).
\end{equation}
A second Taylor remainder, now at \(\widetilde x\), and the residual
inequality in \eqref{eq:h-residual-selector} yield
\[
 \norm{
 \PT_{\widetilde x\to u}
 \left(\grad f(\widetilde x)
 -\lambda^{-1}\Log_{\widetilde x}u\right)
 }_u
 \le\frac{2L_p}{p!}\rho^p.
\]
Parallel transport is an isometry, which proves
\eqref{eq:h-relative-residual}.
\end{proof}

\begin{lemma}[Local \(p\)-th-order step]
\label{lem:h-local-step}
Suppose that \(r>0\), \(d(z,x^\star)\le r\), \(L_p>0\), and
\(\overline B(x^\star,3r)\subseteq B_{\mathrm h}^{\beta}\). Set
\(\lambda=(L_pr^{p-1})^{-1}\) and \(z^+={\cal S}(z,\lambda)\). Then
\[
 f(z^+)-f^\star\le C_pL_pr^{p+1},
 \qquad
 d(z^+,x^\star)\le
 C_pr\sqrt{\frac{L_pr^{p-1}}{\mu}},
\]
where \(C_p\) depends only on \(p\).
\end{lemma}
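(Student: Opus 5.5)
We apply \cref{lem:h-residual-step} with \(u=z\), \(\lambda=(L_pr^{p-1})^{-1}\), and \(\widetilde x=z^+\). From it we get \(\rho=d(z,z^+)\), \(w=\lambda^{-1}\Log_{z^+}z\), and a query \(v=\grad f(z^+)\) satisfying \(\norm{v-w}\le\frac{2L_p}{p!}\rho^p\).

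\emph{Hypotheses of \cref{lem:h-residual-step}.} Let \(\widehat x\) be the proximal point over \(B_{\mathrm h}^{\beta}\). Strong \(h\)-convexity implies strong \(g\)-convexity, so resolvent nonexpansiveness gives \(d(\widehat x,x^\star)\le d(z,x^\star)\le r\). Hence \(\widehat x\in\overline B(x^\star,r)\), which is interior to \(\overline B(x^\star,3r)\subseteq B_{\mathrm h}^{\beta}\). Moreover \(d(z,\widehat x)\le 2r\), so \(\overline B(z,d(z,\widehat x))\subseteq\overline B(x^\star,3r)\subseteq B_{\mathrm h}^{\beta}\). Both hypotheses therefore hold, and \eqref{eq:h-witness-distance} gives \(\rho\le 2r\). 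Consequently \(z^+\in\overline B(x^\star,3r)\), so all uses of smoothness and convexity at \(z^+\) are legitimate.

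\emph{Gradient bound at \(z^+\).} Using \(\norm{w}=\rho/\lambda=L_pr^{p-1}\rho\le 2L_pr^p\) and \(\frac{2L_p}{p!}\rho^p\le\frac{2^{p+1}}{p!}L_pr^p\), the triangle inequality gives
\[
 \norm{\grad f(z^+)}\le c_pL_pr^p .
\]

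\emph{Function gap.} Apply the \(g\)-convexity inequality \eqref{eq:strong-g-convexity} with \(x=z^+\) and \(y=x^\star\), dropping the quadratic term. This gives
\[
 f(z^+)-f^\star\le\inner{\grad f(z^+)}{-\Log_{z^+}x^\star}\le\norm{\grad f(z^+)}\,d(z^+,x^\star)\le c_pL_pr^p\cdot 3r .
\]
This is the first claim, \(f(z^+)-f^\star\le C_pL_pr^{p+1}\). Alternatively, keeping the quadratic term gives the sharper bound \(f(z^+)-f^\star\le\norm{\grad f(z^+)}^2/(2\mu)\), which is not needed here.

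\emph{Distance bound.} Combine the function gap with \eqref{eq:gap-distance}: \(\frac{\mu}{2}d(z^+,x^\star)^2\le C_pL_pr^{p+1}\). Hence
\[
 d(z^+,x^\star)\le\sqrt{2C_p}\,r\sqrt{L_pr^{p-1}/\mu},
\]
which is the second claim after enlarging \(C_p\). A sharper estimate is available from \(\mu d(z^+,x^\star)\le\norm{\grad f(z^+)}\) (strong monotonicity along the geodesic), but the stated form is all that is required.

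\emph{Main obstacle.} The only delicate point is the containment bookkeeping: checking that \(\widehat x\) is interior and that the ball \(\overline B(z,d(z,\widehat x))\) lies inside \(B_{\mathrm h}^{\beta}\). This is what the factor \(3\) in the hypothesis \(\overline B(x^\star,3r)\subseteq B_{\mathrm h}^{\beta}\) provides, and the distance estimates above settle it. Everything else is the triangle inequality and the convexity inequalities already stated.
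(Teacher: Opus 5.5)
Your proof is correct. The verification of the hypotheses of \cref{lem:h-residual-step}, the bound \(\rho\le2r\), and the passage from the objective gap to the distance via \eqref{eq:gap-distance} are the same as in the paper. The difference is in how you bound \(f(z^+)-f^\star\).

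The paper works with the regularized objective \(H(x)=f(x)+d(z,x)^2/(2\lambda)\). Its gradient at \(z^+\) is exactly the residual \(v-w\), so the \((\mu+1/\lambda)\)-strong convexity of \(H\) gives \(H(z^+)-H(\widehat x)\le\frac{\lambda}{2}\norm{v-w}^2\). The prox comparison \(H(\widehat x)\le H(x^\star)\) then yields \(f(z^+)-f^\star\le r^2/(2\lambda)+\frac{\lambda}{2}\norm{v-w}^2\). This is the standard inexact-proximal-point estimate: it uses only the residual and never \(\grad f(z^+)\) itself.

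You instead bound \(\norm{\grad f(z^+)}\le\norm{w}+\norm{v-w}\le c_pL_pr^p\). You then apply the first-order \(g\)-convexity inequality of \(f\), which strong \(h\)-convexity supplies on \(B_{\mathrm h}^{\beta}\), together with \(d(z^+,x^\star)\le3r\). This is slightly more elementary, since it needs neither strong convexity of the regularized objective nor the comparison with \(\widehat x\) and \(x^\star\). The cost is a somewhat larger constant.

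Your side remark about keeping the quadratic term is also correct and gives more than the statement. It yields \(f(z^+)-f^\star\le C_pL_pr^{p+1}Q_p(r)\) and \(d(z^+,x^\star)\le\norm{\grad f(z^+)}/\mu\le C_pr\,Q_p(r)\). In the local regime \(Q_p(r)\le1\) this beats the stated \(r\sqrt{Q_p(r)}\), and it would sharpen the local recursion in the proof of \cref{thm:h-pth-oracle-upper}, although the stated form already suffices there.
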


\begin{proof}
Let \(\widehat x\) minimize
\(f(x)+d(z,x)^2/(2\lambda)\) over \(B_{\mathrm h}^{\beta}\).
Resolvent nonexpansiveness gives
\(d(\widehat x,x^\star)\le r\), so \(\widehat x\) is interior and
\(d(z,\widehat x)\le2r\). Moreover,
\(\overline B(z,d(z,\widehat x))\subseteq
\overline B(x^\star,3r)\). Thus \cref{lem:h-residual-step} applies, and
\[
 \rho\defeq d(z,z^+)\le2r,
 \qquad
 \left\|\grad f(z^+)-\lambda^{-1}\Log_{z^+}z\right\|
 \le C_pL_pr^p.
\]

Put \(H(x)=f(x)+d(z,x)^2/(2\lambda)\). This function is
\((\mu+1/\lambda)\)-strongly geodesically convex. Its gradient at \(z^+\)
is the vector in the preceding display. The strong-convexity inequality and
Cauchy--Schwarz give
\[
 H(z^+)-H(\widehat x)
 \le\frac{\lambda}{2}\norm{\grad H(z^+)}^2.
\]
Since \(H(\widehat x)\le H(x^\star)\) and the regularizer is nonnegative,
\[
 f(z^+)-f^\star
 \le\frac{r^2}{2\lambda}
 +\frac{\lambda}{2}\norm{\grad H(z^+)}^2
 \le C_pL_pr^{p+1}.
\]
The distance bound follows from \eqref{eq:gap-distance}.
\end{proof}

\paragraph{Estimate sequence.}

After the update of \(\omega_{k+1}\) in \cref{alg:h-pth-epoch}, define
\begin{equation}
\label{eq:h-potential-update}
 \Psi_{k+1}(x)
 \defeq\Psi_k^\star+\frac12d(x,\omega_k)^2+a_{k+1}m_k(x),\qquad
 \Psi_{k+1}^\star\defeq\min_x\Psi_{k+1}(x),
\end{equation}
where
\(m_k(x)\defeq f(\widetilde x_k)+\norm{v_k}b_k(x)\).
The function \(\Psi_{k+1}\) is coercive and \(1\)-strongly geodesically
convex. Its unique minimizer in \eqref{eq:h-potential-update} is
\(\omega_{k+1}\).

\begin{lemma}[Estimate-sequence one-step bound]
\label{lem:h-estimate-one-step}
Suppose that
\begin{equation}
\label{eq:h-estimate-invariant}
 \Psi_k^\star+\frac12d(x,\omega_k)^2
 \le\frac12d(x,z)^2+A_kf(x)
\end{equation}
on the working ball and
\(\Delta_k\defeq\Psi_k^\star-A_kf(x_k)\ge0\).
Then \eqref{eq:h-estimate-invariant} holds at \(k+1\), and
\begin{equation}
\label{eq:h-one-step-reserve}
 \Delta_{k+1}
 \ge
 \Delta_k+
 \frac{1-\sigma_k^2}{2}
 A_{k+1}\lambda_k\norm{w_k}^2.
\end{equation}
Both conclusions include the first step \(\alpha_0=1\).
\end{lemma}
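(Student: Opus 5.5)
The plan is to handle the two conclusions separately. The invariant \eqref{eq:h-estimate-invariant} follows from the Busemann minorant together with the strong convexity of \(\Psi_{k+1}\). The reserve bound \eqref{eq:h-one-step-reserve} follows from a lower bound on \(\Psi_{k+1}^\star\). That lower bound is obtained by collapsing the two minorant terms onto a single point through Busemann convexity, then a CAT(0) comparison, and finally a tangent-space linearization at \(\widetilde x_k\).

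\emph{Invariant.} For \(x\) in the working ball, \cref{lem:h-busemann-minorant} applied at \(y=\widetilde x_k\) gives \(m_k(x)\le f(x)\). Combining this with \eqref{eq:h-estimate-invariant} at step \(k\) yields
\(\Psi_{k+1}(x)\le \frac12 d(x,z)^2+A_kf(x)+a_{k+1}f(x)=\frac12d(x,z)^2+A_{k+1}f(x)\).
The function \(\Psi_{k+1}\) is \(1\)-strongly geodesically convex, because \(b_k\) is convex and \(\frac12 d(\cdot,\omega_k)^2\) is \(1\)-strongly convex on a Hadamard manifold. Its minimizer is \(\omega_{k+1}\). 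Hence \(\Psi_{k+1}(x)\ge\Psi_{k+1}^\star+\frac12 d(x,\omega_{k+1})^2\), which is the invariant at \(k+1\).

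\emph{Reserve.} Fix an arbitrary \(x\). I will write \(\Psi_k^\star=\Delta_k+A_kf(x_k)\) and use \(f(x_k)\ge m_k(x_k)\); this needs \(x_k\) in the ball, which is the standing assumption that all iterates lie there. This gives
\[
\Psi_{k+1}(x)\ge \Delta_k+A_{k+1}f(\widetilde x_k)+\norm{v_k}\bigl(A_kb_k(x_k)+a_{k+1}b_k(x)\bigr)+\tfrac12 d(x,\omega_k)^2 .
\]
Let \(y\defeq\gamma_{x_k,x}(\alpha_k)\). Convexity of \(b_k\) gives \(A_kb_k(x_k)+a_{k+1}b_k(x)\ge A_{k+1}b_k(y)\). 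For the quadratic term, the CAT(0) comparison for the two geodesics \(\gamma_{x_k,x}\) and \(\gamma_{x_k,\omega_k}\), which issue from the common point \(x_k\), gives \(d(y,\widetilde y_k)\le\alpha_k d(x,\omega_k)\). Since \(\alpha_k^2=\lambda_k/A_{k+1}\), it follows that \(\frac12 d(x,\omega_k)^2\ge \frac{A_{k+1}}{2\lambda_k}d(y,\widetilde y_k)^2\). Consequently
\[
\Psi_{k+1}^\star\ge\Delta_k+A_{k+1}\Bigl(f(\widetilde x_k)+\inf_y\bigl\{\norm{v_k}b_k(y)+\tfrac1{2\lambda_k}d(y,\widetilde y_k)^2\bigr\}\Bigr).
\]
When \(k=0\) we have \(A_0=0\) and \(\alpha_0=1\), so \(y=x\) and this step is trivial.

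\emph{Linearization.} By \eqref{eq:busemann-basepoint}, with \(\eta_k'(0)=-v_k/\norm{v_k}\), we get \(\norm{v_k}b_k(y)\ge\inner{v_k}{\Log_{\widetilde x_k}y}\). On a Hadamard manifold \(\Log_{\widetilde x_k}\) is nonexpansive, so \(d(y,\widetilde y_k)\ge\norm{\Log_{\widetilde x_k}y-\lambda_kw_k}\), using \(\Log_{\widetilde x_k}\widetilde y_k=\lambda_kw_k\). The infimum is therefore bounded below by the tangent-space minimum \(\min_u\{\inner{v_k}{u}+\norm{u-\lambda_kw_k}^2/(2\lambda_k)\}\). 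This minimum equals \(\lambda_k\inner{v_k}{w_k}-\frac{\lambda_k}{2}\norm{v_k}^2=\frac{\lambda_k}{2}(\norm{w_k}^2-\norm{v_k-w_k}^2)\), which is at least \(\frac{\lambda_k}{2}(1-\sigma_k^2)\norm{w_k}^2\) by \cref{lem:h-residual-step}. Since \(x_{k+1}=\widetilde x_k\), subtracting \(A_{k+1}f(x_{k+1})\) gives \eqref{eq:h-one-step-reserve}.

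The main obstacle is the collapsing step. In Euclidean space the linear minorants combine exactly, but here I must avoid linearizing at \(\widetilde x_k\) before combining \(x_k\) and \(x\). Convexity of \(b_k\) along \(\gamma_{x_k,x}\), paired with the CAT(0) fraction comparison against \(\widetilde y_k=\gamma_{x_k,\omega_k}(\alpha_k)\), is the intended fix. Some care is needed that \(y\) stays in a region where the minorant is only used as a global Busemann inequality, which holds on all of \(\Man\).
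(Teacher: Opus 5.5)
Your proof is correct. The invariant step is the same as the paper's, but you reach the reserve bound by a different route.

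\textbf{How the paper does it.} The paper removes the free variable \(x\) first. It uses \(1\)-Lipschitzness of \(b_k\) to get \(\min_x\{\frac12 d(x,\omega_k)^2+a_{k+1}\norm{v_k}b_k(x)\}\ge a_{k+1}\norm{v_k}b_k(\omega_k)-\frac{a_{k+1}^2}{2}\norm{v_k}^2\). It then combines \(b_k(x_k)\) and \(b_k(\omega_k)\) by convexity along \([x_k,\omega_k]\), which yields \(A_{k+1}b_k(\widetilde y_k)\). Finally it applies \eqref{eq:busemann-basepoint} once, at \(\widetilde y_k\).

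\textbf{How you do it.} You keep \(x\) and combine along \([x_k,x]\) at \(y=\gamma_{x_k,x}(\alpha_k)\). You transfer the quadratic with the CAT(0) estimate \(d(y,\widetilde y_k)\le\alpha_k d(x,\omega_k)\). You then linearize at an arbitrary \(y\) using the basepoint inequality together with \(d(\Exp_p u,\Exp_p w)\ge\norm{u-w}\).

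\textbf{Comparison.} Both routes arrive at \(\lambda_k\inner{v_k}{w_k}-\frac{\lambda_k}{2}\norm{v_k}^2\), so the final constant is identical.
\begin{itemize}
\item Your version is a literal transcription of the Euclidean estimate-sequence substitution. Its intermediate bound \(\inf_y\{\norm{v_k}b_k(y)+d(y,\widetilde y_k)^2/(2\lambda_k)\}\) is formally at least as strong as \eqref{eq:h-reserve-intermediate}. The cost is two comparison-geometry facts.
\item The paper's version needs only convexity and \(1\)-Lipschitzness of \(b_k\) plus the basepoint inequality. Because \(x\) is eliminated before any convex combination is formed, no comparison between geodesics issuing from \(x_k\) is ever needed. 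This is exactly how the paper sidesteps the collapsing obstacle you flagged.
\end{itemize}

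You could also close your own infimum the paper's way. Lipschitzness at \(\widetilde y_k\) bounds it below by \(\norm{v_k}b_k(\widetilde y_k)-\frac{\lambda_k}{2}\norm{v_k}^2\). This removes the need for nonexpansiveness of \(\Log_{\widetilde x_k}\).
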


\begin{proof}
By \cref{lem:h-busemann-minorant}, \(m_k\le f\) on the working ball.
Thus \eqref{eq:h-estimate-invariant} and
\eqref{eq:h-potential-update} give
\(\Psi_{k+1}(x)\le d(x,z)^2/2+A_{k+1}f(x)\).
The function \(\Psi_{k+1}\) is \(1\)-strongly geodesically convex.
Consequently,
\(\Psi_{k+1}^\star+d(x,\omega_{k+1})^2/2\le\Psi_{k+1}(x)\),
which proves the next instance of \eqref{eq:h-estimate-invariant}.

Since \(b_k\) is \(1\)-Lipschitz,
\[
 \min_x\left\{
 \frac12d(x,\omega_k)^2+
 a_{k+1}\norm{v_k}b_k(x)
 \right\}
 \ge
 a_{k+1}\norm{v_k}b_k(\omega_k)
 -\frac{a_{k+1}^2}{2}\norm{v_k}^2.
\]
Using \(m_k(x_k)\le f(x_k)\), then Busemann convexity along
\([x_k,\omega_k]\), gives
\begin{align}
\label{eq:h-reserve-intermediate}
 \Delta_{k+1}
 &\ge
 \Delta_k+
 A_{k+1}\norm{v_k}b_k(\widetilde y_k)
 -\frac{A_{k+1}\lambda_k}{2}\norm{v_k}^2.
\end{align}
The basepoint inequality \eqref{eq:busemann-basepoint}, applied to
\(\eta_k\), gives
\(b_k(\widetilde y_k)\ge
\lambda_k\inner{v_k/\norm{v_k}}{w_k}_{\widetilde x_k}\).
Finally,
\[
 \inner{v_k}{w_k}
 -\frac12\norm{v_k}^2
 =
 \frac12\left(
 \norm{w_k}^2-\norm{v_k-w_k}^2
 \right)
 \ge\frac{1-\sigma_k^2}{2}\norm{w_k}^2
\]
by \eqref{eq:h-relative-residual}. Substitution into
\eqref{eq:h-reserve-intermediate} proves
\eqref{eq:h-one-step-reserve}. The proof does not divide by \(A_k\), so it
also applies when \(A_0=0\).
\end{proof}

\paragraph{Epoch complexity.}

\begin{proposition}[Riemannian \(p\)-th-order epoch]
\label{prop:h-pth-epoch}
There are constants \(C_p,\beta_p\ge1\), depending only on \(p\), with the
following property. If \(d(z,x^\star)\le D\), the method
\cref{alg:h-pth-epoch} returns a point \(x^+\) satisfying
\begin{equation}
\label{eq:h-epoch-target}
 f(x^+)-f^\star\le\frac{\mu D^2}{8}
\end{equation}
after at most
\[
 C_p\left[
 1+\left(\frac{L_pD^{p-1}}{\mu}\right)^{2/(3p+1)}
 \right]
\]
accepted steps. The total number of \(p\)-th-order oracle calls is
\[
 \widetilde O_p\left(
 1+\left(\frac{L_pD^{p-1}}{\mu}\right)^{2/(3p+1)}
 \right).
\]
Every query lies in
\(\overline B(x^\star,\beta_pD)\).
\end{proposition}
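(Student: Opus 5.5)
The plan is to run the Monteiro--Svaiter / Kornowski--Shamir accelerated-proximal analysis on top of the Busemann estimate sequence. The argument is assembled from \cref{lem:h-estimate-one-step,lem:h-residual-step} together with a growth lemma for \(A_k\).

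\textbf{Step 1 (reduction to growth of \(A_N\)).} At \(k=0\) we have \(\Psi_0^\star=0\) and \(\omega_0=z\), so \eqref{eq:h-estimate-invariant} holds trivially and \(\Delta_0=0\). Whenever \(\sigma_k<1\), \cref{lem:h-estimate-one-step} propagates the invariant and keeps \(\Delta_k\ge0\). The cap condition \(2\theta_+/p!\le1/(p+1)\) guarantees \(\sigma_k\le 1/(p+1)<1\) as long as the accepted statistic satisfies \(\lambda_kL_p\rho_k^{p-1}\le\theta_+\). Evaluate the invariant at \(x=x^\star\) and use \(\Psi_N^\star\ge A_Nf(x_N)\). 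This gives
\[
 A_N\bigl(f(x_N)-f^\star\bigr)+\tfrac12 d(x^\star,\omega_N)^2\le \tfrac12D^2 .
\]
So \eqref{eq:h-epoch-target} follows once \(A_N\ge4/\mu\). Summing \eqref{eq:h-one-step-reserve} gives the additional bound
\[
 \sum_k A_{k+1}\rho_k^2/\lambda_k\le C_pD^2,
\]
because \(\norm{w_k}=\rho_k/\lambda_k\). The same display also gives \(d(\omega_k,x^\star)\le D\) and \(f(x_k)-f^\star\le D^2/(2A_k)\). These are the localization facts needed in Step 3.

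\textbf{Step 2 (growth of \(A_k\)).} I split the accepted steps into two kinds.

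For capped steps, \(A_{k+1}=A_k(1+1/k)^{(3p+1)/2}\). The product over such steps telescopes to the \(k^{(3p+1)/2}\) growth we want.

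For bracketed steps, the accepted low endpoint satisfies \(\lambda_kL_p\rho_k^{p-1}\ge\theta_-\). Hence
\[
 \rho_k\ge(\theta_-/(L_p\lambda_k))^{1/(p-1)}.
\]
Plugging this into the reserve sum gives
\[
 \sum A_{k+1}\lambda_k^{-(p+1)/(p-1)}\le C_pD^2L_p^{2/(p-1)}.
\]
Combining this with \(\lambda_k=a_{k+1}^2/A_{k+1}\), \(a_{k+1}\le A_{k+1}^{1/2}\bigl(A_{k+1}^{1/2}-A_k^{1/2}\bigr)\cdot 2\)-type identities, and reverse H\"older in the standard way yields
\[
 A_N\gtrsim_p N^{(3p+1)/2}/(L_pD^{p-1}).
\]
The \(k=0\) search initializes \(A_1=\lambda_0\) at this same scale, or at \(1/\mu\) if no halving occurs. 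Demanding \(A_N\ge4/\mu\) then gives
\[
 N\le C_p\bigl[1+(L_pD^{p-1}/\mu)^{2/(3p+1)}\bigr].
\]

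\textbf{Step 3 (queries stay in the ball; searches are cheap).} I would argue by induction that \(\widetilde y_k\) lies within \(c_pD\) of \(x^\star\). This holds because \(\widetilde y_k\) is on the geodesic \([x_k,\omega_k]\), \(d(\omega_k,x^\star)\le D\), and \(d(x_k,x^\star)\le D\) follows from the gap bound and \eqref{eq:gap-distance} once \(A_k\ge1/\mu\) (before that, directly from the previous step).

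The proximal witness \(\widehat x\) is within \(d(\widetilde y_k,x^\star)\) of \(x^\star\) by resolvent nonexpansiveness. Hence \(\overline B(\widetilde y_k,d(\widetilde y_k,\widehat x))\subseteq\overline B(x^\star,3c_pD)\), and taking \(\beta_p=3c_p+1\) validates \cref{lem:h-residual-step}. By \eqref{eq:h-witness-distance}, \(\widetilde x_k\) is within \(2c_pD\) of \(x^\star\).

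Each trial in the parameter search costs one oracle call at \(\widetilde y\), since computing \(\mathcal S\) needs only \(\mathcal O^{(p)}_{\widetilde y}f\). The halving and doubling phases and the bisection to relative precision \(\delta_p\) then cost logarithmically many calls in the ratio of the bracket endpoints, which produces the \(\widetilde O_p\) factor.

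\textbf{Main obstacle.} The main difficulty is showing that the bisection terminates with a low endpoint satisfying \(\theta_-\le\lambda L_p\rho^{p-1}\le\theta_+\). The statistic \(\lambda\mapsto\lambda L_p\rho(\lambda)^{p-1}\) comes from an argmin selector and need not be continuous, so an intermediate-value argument is unavailable.

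My first attempt would be to compare the low and high endpoints through the proximal witness instead. On a Hadamard manifold \(\lambda\mapsto d(u,\widehat x_\lambda)\) is monotone and locally Lipschitz in \(\log\lambda\). The relative-error bound \eqref{eq:h-relative-residual} pins \(\rho\) to within a factor \((1\pm\sigma)\) of \(\lambda\norm{\grad f(\widetilde x)}\). Together with the tolerances \(\lambda_L/\lambda_H\ge1-\delta_p\) and \(2|\alpha_H-\alpha_L|d(x_k,\omega_k)\le\delta_p\rho_H\), this should force \(\rho_L\ge c_p\rho_H\), giving the statistic at the low endpoint at least \(\theta_-\defeq c_p^{p-1}(1-\delta_p)\theta_+\). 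If this comparison fails, I would fall back to replacing the selector's \(\rho\) by the witness distance in the statistic.
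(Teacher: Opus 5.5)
Your Step 1 matches the paper. Your plan for the bisection is also essentially the paper's argument: you compare the two endpoints through the exact proximal displacement, via \(\rho\le d(u,\widehat x)\le(1+\sigma)\rho\), the resolvent identity, and nonexpansiveness in the base point. So that is not where the difficulty lies. The genuine gaps are in Steps 2 and 3, exactly where the cap \(\alpha_k\le\bar\alpha_k\) departs from standard Monteiro--Svaiter acceleration. Your plan does not exploit the two consequences of the cap that the proof actually needs.

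In Step 2, put \(s=(3p+1)/2\). The factors \((1+1/j)^s\) telescope to \(k^s\) only when every step is capped. Reverse H\"older over the band steps gives \(A_N\gtrsim_p m^s/(L_pD^{p-1})\), where \(m\) is the number of band steps, not \(N\). Taking the better of these two separate bounds gives only \(A_N\gtrsim_p\max\{m,N/m\}^s/(L_pD^{p-1})\), which is of order \(N^{s/2}/(L_pD^{p-1})\) when \(m\approx\sqrt N\). This is because capped steps at the largest indices contribute only a factor \(((N+1)/(m+1))^s\). You need an argument that tracks how the two kinds of steps interact.

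The paper measures the shortfall against the cap schedule, \(A_{k+1}=A_1(k+1)^se^{-\mathcal L_k}\) with \(\mathcal L_k=\sum_{j\in\mathcal B,\,j\le k}\log\frac{1-\alpha_j}{1-\bar\alpha_j}\). At a band step, \(\alpha_k\le s/k\) and \(\lambda_kL_p\rho_k^{p-1}\ge\theta_-\) give \(A_{k+1}\rho_k^2/\lambda_k\gtrsim_p(A_1L_p)^{-r}e^{r\mathcal L_k}/k\), using \(r=2/(p-1)\) and \((p+1-s)r=-1\). Meanwhile \(e^{r\mathcal L_k}-e^{r\mathcal L_{k-1}}\le re^{r\mathcal L_k}\cdot2s/k\). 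Summing against the reserve bound gives \(\mathcal L_k\le\log\max\{G,1\}+C_p\), where \(A_1=G/(L_pD^{p-1})\) and \(G\ge c_p\) because \(\rho_0\le2D\).

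Step 3 does not close either. The epoch stops only when \(A_N\ge4/\mu\), so for essentially every step \(A_k<1/\mu\). In that phase, \(f(x_k)-f^\star\le D^2/(2A_k)\) with \(A_k\ge A_1\) only gives \(d(x_k,x^\star)\lesssim_p D\sqrt{L_pD^{p-1}/\mu}\). Propagating `from the previous step' through the witness only gives \(d(\widetilde x_k,x^\star)\le3\,d(\widetilde y_k,x^\star)\), so the radius may triple at every step. Neither bound yields a \(\beta_p\) depending only on \(p\).

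The missing ingredient is an additive drift recursion whose drift is summable. Strong convexity of the regularized objective gives \(d(\widetilde x_k,\widehat x_k)\le\sigma_k\rho_k\). Hence \(d(x_{k+1},x^\star)\le(1-\alpha_k)d(x_k,x^\star)+\alpha_kd(\omega_k,x^\star)+\sigma_k\rho_k\). Since \(\lambda_k/A_{k+1}=\alpha_k^2\), Cauchy--Schwarz gives \(\sum_k\sigma_k\rho_k\le\bar\sigma_p\bigl(\sum_kA_{k+1}\rho_k^2/\lambda_k\bigr)^{1/2}\bigl(\sum_k\alpha_k^2\bigr)^{1/2}=O_p(D)\). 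The bound \(\sum_k\alpha_k^2\le3p+3\) used here is exactly what the cap buys. Finally, this localization must be maintained inductively before each query, so that \cref{lem:h-residual-step} and the Taylor remainders apply at every trial point.
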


\begin{proof}
Set \(\Psi_0^\star=0\) for the analysis. The initialization in
\cref{alg:h-pth-epoch} then satisfies \eqref{eq:h-estimate-invariant} and
\(\Delta_0=0\).
Evaluating the invariant at \(x^\star\), and using
\(\Psi_k^\star\ge A_kf(x_k)\), gives
\begin{equation}
\label{eq:h-value-center-bound}
 A_k\bigl(f(x_k)-f^\star\bigr)
 +\frac12d(\omega_k,x^\star)^2
 \le\frac{D^2}{2}.
\end{equation}
Telescoping \eqref{eq:h-one-step-reserve} and using
\(\sigma_k\le\bar\sigma_p\defeq2\theta_+/p!\) gives
\begin{equation}
\label{eq:h-movement-budget}
 \sum_{k<N}
 \frac{A_{k+1}}{\lambda_k}\rho_k^2
 \le\frac{D^2}{1-\bar\sigma_p^2}.
\end{equation}

We next prove the growth of \(A_k\). Put
\(s\defeq(3p+1)/2\) and \(r\defeq2/(p-1)\).
The first-step band and
\(\rho_0\le d(z,\widehat x_0)\le2D\), where \(\widehat x_0\) is the
regularized minimizer used in the proof of \cref{lem:h-residual-step}, give
\begin{equation}
\label{eq:h-first-weight}
 A_1=\lambda_0
 \ge\frac{\theta_-}{L_p(2D)^{p-1}}.
\end{equation}
For \(k\ge1\), let
\(\bar\alpha_k=1-(1+1/k)^{-s}\).
Let \({\cal B}\) be the steps at which the accepted value is below the cap,
and define
\[
 {\cal L}_k
 \defeq
 \sum_{\substack{j\in{\cal B}\\1\le j\le k}}
 \log\frac{1-\alpha_j}{1-\bar\alpha_j}.
\]
The cap product gives
\begin{equation}
\label{eq:h-cap-product}
 A_{k+1}=A_1(k+1)^s\exp(-{\cal L}_k).
\end{equation}
On \(k\in{\cal B}\), write
\(\vartheta_k=\lambda_kL_p\rho_k^{p-1}\ge\theta_-\) and
\[
 \chi_k
 \defeq\frac{A_{k+1}}{\lambda_k}\rho_k^2
 =
 \frac{\vartheta_k^r}{L_p^r}
 \alpha_k^{-(2+2r)}A_{k+1}^{-r}.
\]
For \(k\ge2s\), use \(\alpha_k\le s/k\),
\eqref{eq:h-cap-product}, and
\[
 2+2r=(p+1)r,\qquad (p+1-s)r=-1,
\]
to obtain
\begin{equation}
\label{eq:h-deficit-charge}
 \chi_k
 \ge
 \left(\frac{c_p}{A_1L_p}\right)^r
 \frac{\exp(r{\cal L}_k)}{k}.
\end{equation}
Moreover, for \(k\ge2s\),
\(\Delta{\cal L}_k
=\log((1-\alpha_k)/(1-\bar\alpha_k))
\le(\bar\alpha_k-\alpha_k)/(1-\bar\alpha_k)\le2s/k\).
The indices \(j<2s\) contribute only \(O_p(1)\) to \({\cal L}_k\), since
each increment is at most
\(\log(1/(1-\bar\alpha_j))=s\log(1+1/j)\le s\log2\). This finite
contribution is absorbed into the constant in \eqref{eq:h-deficit-total}.
At such a band step,
\[
 e^{r{\cal L}_k}-e^{r{\cal L}_{k-1}}
 \le r e^{r{\cal L}_k}\Delta{\cal L}_k
 \le C_p(A_1L_p)^r\chi_k
\]
by \eqref{eq:h-deficit-charge}. At a capped step the left-hand side is zero.
Summing and using \eqref{eq:h-movement-budget} yields
\begin{equation}
\label{eq:h-deficit-total}
 \exp(r{\cal L}_k)
 \le
 C_p\left[
 1+D^2(A_1L_p)^r
 \right].
\end{equation}
Write \(A_1=G/(L_pD^{p-1})\), absorbing \(p\)-dependent constants into
\(G\). By \eqref{eq:h-first-weight}, \(G\ge c_p\).
Since \((p-1)r=2\), \eqref{eq:h-deficit-total} gives
\({\cal L}_k\le\log\max\{G,1\}+C_p\).
Substitution into \eqref{eq:h-cap-product} proves
\begin{equation}
\label{eq:h-weight-growth}
 A_N\ge
 c_p\frac{N^{(3p+1)/2}}{L_pD^{p-1}}.
\end{equation}
This is the same scalar growth exponent as in Euclidean higher-order
acceleration \citep{BubeckJiangLeeLiSidford2019}.

It remains to justify localization and the searches. The cap satisfies
\begin{equation}
\label{eq:h-alpha-square-sum}
 \sum_{k\ge0}\alpha_k^2\le3p+3.
\end{equation}
Indeed, \(\bar\alpha_k\le\min\{1,s/k\}\). Splitting the series at
\(\lceil s\rceil\), and including \(\alpha_0=1\), gives
\eqref{eq:h-alpha-square-sum}. Let \(\widehat x_k\) be the regularized
minimizer at the current center. Strong convexity of the regularized objective
and resolvent nonexpansiveness give
\(d(\widetilde x_k,\widehat x_k)\le\sigma_k\rho_k\) and
\(d(\widehat x_k,x^\star)\le d(\widetilde y_k,x^\star)\).
Together with \eqref{eq:h-relative-residual} and Cauchy--Schwarz, this gives
\begin{align}
\label{eq:h-total-drift}
 \sum_{k<N}\sigma_k\rho_k
 &\le
 \bar\sigma_p
 \left(
 \sum_{k<N}\frac{A_{k+1}}{\lambda_k}\rho_k^2
 \right)^{1/2}
 \left(
 \sum_{k<N}\frac{\lambda_k}{A_{k+1}}
 \right)^{1/2} \notag\\
 &\le
 \frac{\bar\sigma_p\sqrt{3p+3}}
 {\sqrt{1-\bar\sigma_p^2}}D
 \defeq C_p^{\rm mov}D.
\end{align}
Moreover, geodesic convexity of the distance and the preceding two
resolvent bounds give
\[
 d(x_{k+1},x^\star)
 \le(1-\alpha_k)d(x_k,x^\star)
 +\alpha_kd(\omega_k,x^\star)+\sigma_k\rho_k.
\]
The prefix form of \eqref{eq:h-total-drift} gives
\(d(x_k,x^\star)\le D+C_p^{\rm mov}D\) and
\(d(\omega_k,x^\star)\le D\).
Set \(K_p=2+C_p^{\rm mov}\) and choose \(\beta_p>3K_p\).
Every trial center lies in
\(\overline B(x^\star,K_pD)\). The regularized minimizer
\(\widehat x_k\), which is used only in the proof, lies no farther from
\(x^\star\) than the center. By
the triangle inequality,
\(\overline B(\widetilde y_k,
d(\widetilde y_k,\widehat x_k))\subseteq
\overline B(x^\star,3K_pD)\subset B_{\mathrm h}^{\beta}\).
This verifies the hypotheses of \cref{lem:h-residual-step}. By
\eqref{eq:h-witness-distance}, the selected residual point also lies in
\(\overline B(x^\star,3K_pD)\). Its oracle query and Taylor remainder are
therefore licensed before \eqref{eq:h-total-drift} is updated.

At the first step, the oracle at \(z\) is reused for every trial
\(\lambda\). The statistic
\(\lambda L_pd(z,{\cal S}(z,\lambda))^{p-1}\) tends to zero as
\(\lambda\downarrow0\). If it stayed below \(\theta_+\) as
\(\lambda\to\infty\), feasibility in
\eqref{eq:h-residual-selector} would imply
\(\grad f(z)=0\), which is the terminal case. Hence doubling gives a finite
low-high bracket. If \(\lambda_L/\lambda_H\ge1-\delta_p\), the resolvent
identity and the observed-to-exact comparison give
\[
 \lambda_LL_pd(z,{\cal S}(z,\lambda_L))^{p-1}
 \ge
 \frac{(1-\delta_p)^p}{(1+2\theta_+/p!)^{p-1}}\theta_+.
\]
Thus bisection returns the first-step band. Only its accepted point is
queried for a gradient.

For \(k\ge1\), a trial \(\alpha\) uses
\(\lambda(\alpha)=A_k\alpha^2/(1-\alpha)\) and the center
\(\gamma_{x_k,\omega_k}(\alpha)\). If the cap is high, halve \(\alpha\)
until a low trial is found. Bisect until
\[
 \frac{\lambda(\alpha_L)}{\lambda(\alpha_H)}\ge1-\delta_p,
 \qquad
 2|\alpha_H-\alpha_L|d(x_k,\omega_k)
 \le\delta_p\rho_H,
 \qquad \delta_p=\frac1{8p}.
\]
Writing \(r_L,r_H\) for the two exact regularized displacements,
nonexpansiveness in the base point and the resolvent identity give
\(r_L\ge(1-2\delta_p)r_H\).
The low observed displacement is within the factor
\(1+2\theta_+/p!\) of \(r_L\), while \(\rho_H\le r_H\). Hence the accepted
low endpoint satisfies
\[
 \lambda(\alpha_L)L_p\rho_L^{p-1}
 \ge
 \frac{(1-\delta_p)(1-2\delta_p)^{p-1}}
 {(1+2\theta_+/p!)^{p-1}}\theta_+.
\]
This is the required fixed lower threshold.

For every step with \(A_k<4/\mu\),
\(\lambda_{\rm cap}\le C_p/\mu\). Localization gives
\(\rho\le2K_pD\), and a trial is low whenever
\(\lambda L_p(2K_pD)^{p-1}\le\theta_+\). Every high endpoint also satisfies
\(\rho_H\ge c_pD(L_pD^{p-1}/\mu)^{-1/(p-1)}\). These bounds show that
halving and bisection require
\[
 O_p\left(
 1+\log(2+L_pD^{p-1}/\mu)+\log(k+2)
 \right)
\]
trial queries. Therefore the total query count differs from the accepted
step count only by logarithmic factors.

Finally, \eqref{eq:h-value-center-bound} and
\eqref{eq:h-weight-growth} show that the target
\eqref{eq:h-epoch-target} holds once \(A_N\ge4/\mu\). If \(L_p=0\), use
the valid positive upper bound
\(\widehat L_p=\mu D^{1-p}\), which gives a constant epoch cost.
\end{proof}

\begin{proof}[Proof of \cref{thm:h-pth-oracle-upper}]
Assume first that \(L_p>0\). Apply \cref{prop:h-pth-epoch} with
\(D_j=2^{-j}R\), restarting from the previous epoch output, until
\(L_pD_j^{p-1}/\mu\) is below a sufficiently small constant depending only
on \(p\). By \eqref{eq:gap-distance}, each completed epoch returns a center
within distance \(D_{j+1}\) of \(x^\star\). The nonconstant epoch costs form
a geometric series, while the number of epochs and their search overheads
are polylogarithmic in \(2+\Qp\). Their total cost is therefore
\(\widetilde O_p(1+\Qp^{2/(3p+1)})\).

Let \(z_0\) be the first local center, let \(D_0\) be its distance bound,
and use the scale-dependent condition parameter \(Q_p(D_t)\). Increase
\(\beta_p\), if necessary, so
that \(\beta_p\ge3\). The first local ball is contained in the original
working ball, and the transition threshold below makes the radii decrease,
so this containment holds inductively. Set
\(z_{t+1}={\cal S}(z_t,(L_pD_t^{p-1})^{-1})\), apply
\cref{lem:h-local-step}, and use its distance conclusion as the next bound
\(D_{t+1}\). After enlarging \(C_p\) if necessary,
\[
 f(z_{t+1})-f^\star\le C_p\mu D_t^2Q_p(D_t),
 \qquad
 D_{t+1}\le C_pD_t\sqrt{Q_p(D_t)},
 \qquad
 Q_p(D_{t+1})\le C_pQ_p(D_t)^{(p+1)/2}.
\]
Choose the transition threshold so that
\(C_p^{2/(p-1)}Q_p(D_0)\le1/2\) and
\(C_p\sqrt{Q_p(D_0)}\le1/2\). Induction gives
\(C_p^{2/(p-1)}Q_p(D_t)\le
2^{-((p+1)/2)^t}\), and the radii decrease. Since \(D_t\le R\), the first
inequality above gives a doubly exponential objective gap. Every local query
remains in the original working ball. Each local step uses at most two oracle
calls and no parameter search, so
\(O_p(\log\log(e^e+\mu R^2/\eps))\) local steps suffice.

If \(L_p=0\), choose a positive valid upper bound \(\widehat L_p\) small
enough that the transition condition holds and
\(C_p\widehat L_pR^{p+1}\le\eps\). One application of
\cref{lem:h-local-step} then reaches the target accuracy. Combining the
global and local costs proves the theorem.
\end{proof}

\subsection{Curvature-adaptive localization on hyperbolic space}
\label{sec:h-curvature-adaptive}

We now prove \cref{thm:h-curvature-adaptive-upper}. The additional geometric
step uses only the gradient contained in the Riemannian \(p\)-th-order
oracle.
The next lemma is the curvature-rescaled form of
\citet[Proposition~8]{CriscitielloKim2025}.

\begin{lemma}[Curvature-scale localization]
\label{lem:h-curvature-localization}
Let \(f\in\Fh(\Hyp^n_\kappa;\mu,L_p,R)\), where \(n\ge2\), and let
\(d(z_0,x^\star)\le R\). Define
\(N_{\mathrm{loc}}\defeq
\lceil4\max\{0,\log(\kappa R/4)\}\rceil\).
There is a gradient-based procedure that uses at most
\(N_{\mathrm{loc}}\) queries and returns \(z_{\mathrm{loc}}\) satisfying
\(d(z_{\mathrm{loc}},x^\star)\le\min\{R,4/\kappa\}\).
Every localization query lies in \(\overline B(x^\star,R)\).
\end{lemma}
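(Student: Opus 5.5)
The plan is to reduce to curvature \(-1\) by rescaling and then iterate a one-query contraction step built from the Busemann minorant, in the spirit of \citet[Proposition~8]{CriscitielloKim2025}. Multiplying the metric by \(\kappa^2\) turns \(\Hyp^n_\kappa\) into \(\Hyp^n\) and the radius \(R\) into \(\kappa R\). It also scales Busemann functions and distances by \(\kappa\) and rescales the gradient, but keeps the direction of \(\grad f\). Since the procedure will only use gradient directions, it suffices to prove the following: in \(\Hyp^n\), starting from \(d(z_0,x^\star)\le r_0\defeq\kappa R\), at most \(\lceil4\log_+(r_0/4)\rceil\) queries produce a point within distance \(\min\{r_0,4\}\) of \(x^\star\). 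If \(r_0\le4\), nothing needs to be done and \(z_{\mathrm{loc}}=z_0\).

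The information extracted from one query is this. Suppose the current center \(z_j\) satisfies \(d(z_j,x^\star)\le r_j\), and query \(v=\grad f(z_j)\). If \(v=0\), stop, since \(z_j=x^\star\). Otherwise \cref{lem:h-busemann-minorant} with \(y=z_j\) and \(z=x^\star\) gives \(f(z_j)+\norm{v}b_{z_j}(x^\star)\le f^\star\le f(z_j)\), hence \(b_{z_j}(x^\star)\le0\). Thus \(x^\star\) lies in
\[
K_j\defeq\overline B(z_j,r_j)\cap\{b_{z_j}\le0\},
\]
the intersection of the ball with the horoball whose boundary horosphere passes through \(z_j\) and which opens along the ray \(\gamma_{z_j}\). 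This is legitimate because \(x^\star\) and \(z_j\) both lie in \(B_{\mathrm h}^\beta\), as checked at the end. The next center will be \(z_{j+1}\defeq\gamma_{z_j}(s_j)\) for a suitable depth \(s_j\), for instance \(s_j=r_j/2\), and the next radius \(r_{j+1}\) will be the Chebyshev radius bound \(\sup_{x\in K_j}d(x,\gamma_{z_j}(s_j))\).

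The main obstacle is the contraction estimate: a hyperbolic-trigonometry computation in \(\Hyp^2\), which suffices because the configuration is planar. Take \(x\in K_j\) with \(d(z_j,x)=t\le r_j\) and angle \(\phi\) at \(z_j\) between \(\Log_{z_j}x\) and \(\gamma_{z_j}'(0)\). In the upper half-plane with the horoball a horizontal half-plane, the condition \(b(x)\le0\) becomes an explicit relation between \(\phi\) and \(t\), roughly \(\sin\phi\lesssim 1/\sinh t\) near the boundary. Feeding this into the hyperbolic law of cosines for the triangle \((z_j,x,\gamma(s))\) should give
\[
d(x,\gamma(r_j/2))\le r_j/2+c_0,\qquad c_0\le\log2 .
\]
Hence \(r_{j+1}\le r_j/2+\log2\), and this is at most \(e^{-1/4}r_j\) whenever \(r_j\ge4\). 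Iterating, \(r_j\le\max\{4,e^{-j/4}r_0\}\). So after \(j=\lceil4\log(r_0/4)\rceil\) steps the radius is at most \(4\); undoing the rescaling gives \(d(z_{\mathrm{loc}},x^\star)\le4/\kappa\). I expect this angular/law-of-cosines bound, with a clean enough constant to give the stated factor \(4\) in \(N_{\mathrm{loc}}\), to be the delicate part. If \(s_j=r_j/2\) does not give a clean constant, I would optimize \(s_j\) or weaken \(c_0\), since any contraction factor below \(e^{-1/4}\) for \(r_j\ge4\) suffices.

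Finally, the query locations are controlled by monotonicity of the radii. Each iteration has \(r_{j+1}\le r_j\) when \(r_j\ge4\), and we stop once \(r_j\le4\). Every queried center satisfies \(d(z_j,x^\star)\le r_j\le r_0\), which is \(\le R\) in the original metric. So all queries lie in \(\overline B(x^\star,R)\subseteq B_{\mathrm h}^\beta\), which also justifies applying \cref{lem:h-busemann-minorant} at each step. The returned point satisfies \(d(z_{\mathrm{loc}},x^\star)\le\min\{R,4/\kappa\}\). Only gradients are used, and each iteration costs one query, so there are at most \(N_{\mathrm{loc}}\) queries.
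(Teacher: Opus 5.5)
Your outline is the paper's argument. It uses one gradient query per step, the Busemann support \(b_{z_j}(x^\star)\le0\) from \cref{lem:h-busemann-minorant}, a step of length \(r_j/2\) along \(-\grad f(z_j)\), the contraction \(r_j/2+\log2\le e^{-1/4}r_j\) for \(r_j\ge4\), and the same bookkeeping for the query count and query locations. It differs only in how the key estimate is obtained, and that estimate is the one step you leave unproved.

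The paper does not intersect the horoball with \(\overline B(z_j,r_j)\) directly. It replaces the ball by the larger horoball tangent to it at \(\gamma_{z_j}(r_j)\), so the region becomes an intersection of two horoballs. In the upper half-space with \(z_j=(0,1)\) and \(a=e^{r_j/2}\), these are \(\{y\ge1\}\) and \(\{|x|^2+y^2\le a^2y\}\). The bound \(\cosh d((0,a),(x,y))=(|x|^2+y^2+a^2)/(2ay)\le a\) is then immediate.

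Your law-of-cosines route also works, but the heuristic you state for the horoball condition is wrong. Let the far vertex of the triangle go to infinity along \(\gamma_{z_j}\) in the law of cosines. This gives \(e^{b_{z_j}(x)}=\cosh t-\sinh t\cos\varphi\), so \(b_{z_j}(x)\le0\) is exactly \(\cos\varphi\ge\tanh(t/2)\). Hence \(\sin\varphi\le\operatorname{sech}(t/2)\approx2e^{-t/2}\); the quantity of order \(1/\sinh t\) is \(1-\cos\varphi\), not \(\sin\varphi\). Taken literally, \(\sin\varphi\lesssim1/\sinh t\) would give \(c_0\approx0\), an unjustified improvement.

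With the correct condition, put \(s=r_j/2\) and use \(\sinh t\tanh(t/2)=\cosh t-1\), monotonicity in \(t\), and \(t\le r_j=2s\):
\[
\cosh d\bigl(x,\gamma_{z_j}(s)\bigr)=\cosh t\cosh s-\sinh t\sinh s\cos\varphi
\le e^{-s}\cosh t+\sinh s\le e^{-s}\cosh(2s)+\sinh s\le e^{s}.
\]
Hence \(r_{j+1}\le\operatorname{arcosh}(e^{r_j/2})\le r_j/2+\log2\). This is exactly the paper's bound and confirms your guess \(c_0=\log2\). Your ball-based version is marginally sharper, by the term \((e^{-3s}-e^{-s})/2\). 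The paper's two-horoball picture makes the computation a single line.

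The rest of your proposal is sound. This includes the reduction to a totally geodesic plane and the rescaling, which preserves gradient directions and the step vector. It also includes the stopping rule and the containment of all queries in \(\overline B(x^\star,R)\).
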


\begin{proof}
If \(\kappa R\le4\), take \(z_{\mathrm{loc}}=z_0\). Suppose now that
\(\kappa R>4\), and put \(r_k=Re^{-k/4}\). At \(z_k\), query
\(g_k=\grad f(z_k)\). If \(g_k=0\), strong geodesic convexity gives
\(z_k=x^\star\), so take \(z_{\mathrm{loc}}=z_k\). Otherwise define
\(z_{k+1}\defeq
\Exp_{z_k}(-(r_k/2)g_k/\norm{g_k})\).

Assume inductively that \(d(z_k,x^\star)\le r_k\) and
\(\kappa r_k\ge4\). Define
\(\gamma_k(t)\defeq\Exp_{z_k}(-tg_k/\norm{g_k})\) and
\(b_k\defeq b_{\gamma_k}\).
The Busemann support from \cref{lem:h-busemann-minorant} gives
\(b_k(x^\star)\le0\).
Thus \(x^\star\) lies in the supporting horoball at \(z_k\). Let
\(H_k^{\mathrm{ball}}\) be the horoball that contains
\(\overline B(z_k,r_k)\) and whose boundary is tangent to this ball at
\(\Exp_{z_k}(-r_kg_k/\norm{g_k})\). Then \(x^\star\) also lies in
\(H_k^{\mathrm{ball}}\).

The intersection of these two horoballs is contained in
\[
 \overline B\!\left(
 z_{k+1},
 \frac{1}{\kappa}\operatorname{arcosh}
 \left(e^{\kappa r_k/2}\right)\right).
\]
To see the constant, rescale to curvature \(-1\) and use an isometry of the
upper-half-plane model. Put \(z_k=(0,1)\), let the supporting horoball be
\(\{y\ge1\}\), and write \(a=e^{r_k/2}\). Then
\(z_{k+1}=(0,a)\) and the second horoball is
\(\{(x,y):x^2+y^2\le a^2y\}\). For a point in their intersection,
\[
 \cosh d\bigl((0,a),(x,y)\bigr)
 =\frac{x^2+y^2+a^2}{2ay}
 \le a.
\]
Rescaling the metric gives the displayed radius.

For \(t\ge4\), \(\operatorname{arcosh}(e^{t/2})
\le t/2+1\le e^{-1/4}t\).
It follows that \(d(z_{k+1},x^\star)\le e^{-1/4}r_k=r_{k+1}\).
For every executed step \(k<N_{\mathrm{loc}}\), one has
\(\kappa r_k>4\). At termination, \(r_{N_{\mathrm{loc}}}\le4/\kappa\).
The induction also gives
\(d(z_k,x^\star)\le r_k\le R\) at every queried point.
\end{proof}

\begin{proof}[Proof of \cref{thm:h-curvature-adaptive-upper}]
Apply \cref{lem:h-curvature-localization} and put
\(\overline R_\kappa=\min\{R,4/\kappa\}\). Starting from
\(z_{\mathrm{loc}}\), apply \cref{thm:h-pth-oracle-upper} with radius budget
\(\overline R_\kappa\). Its enlarged working ball is contained in the
original enlarged ball because both balls are centered at \(x^\star\).
Substituting the localized radius gives the claimed complexity.

For \(\kappa R\ge4\),
\(\overline Q_{p,\kappa}=4^{p-1}\Qp/(\kappa R)^{p-1}\).
For \(\kappa R\ge8\), one localized epoch returns \(x^+\) satisfying
\(f(x^+)-f^\star\le(\mu/8)(4/\kappa)^2
=2\mu/\kappa^2\le\mu R^2/32\).
\end{proof}

\paragraph{Fixed-curvature optimality.}
\label{sec:h-lower}

The reduction below uses the public initial point as the center of the
hyperbolic normal chart.  The minimizer therefore remains hidden.

\begin{proof}[Proof of \cref{thm:h-pth-oracle-lower}]
Let \(g\) be the Euclidean hard instance of
\citet[Section~3.2]{KornowskiShamir2020}.  Denote its public initial point by \(u_0\),
its minimizer by \(u^\star\), its initial-radius bound by \(\bar R\), and
its strong-convexity and smoothness parameters by \(\bar\mu\) and
\(\bar L_p\). Thus
\(\bar Q_p\defeq\bar L_p\bar R^{p-1}/\bar\mu\).
Let \(\beta=\beta_p\) be the inflation factor in
\cref{thm:h-pth-oracle-upper}.

We first record a uniform lower-derivative bound for the source family.
Its nonquadratic part is a rotated path sum with terms
\(|t|^{p+1}/(p+1)\), together with a linear term.  The path-incidence
operator has norm at most \(2\).  Generalized H\"older therefore gives,
for \(1\le j\le p\),
\begin{equation}
\label{eq:h-source-derivatives}
 \sup_{\overline B(u^\star,\beta\bar R)}\norm{D^jg}_{\mathrm{op}}
 \le C_{p,\beta}\left(
 \bar L_p\bar R^{p+1-j}
 +\boldsymbol 1_{\{j\le2\}}\bar\mu\bar R^{2-j}
 \right).
\end{equation}
For \(j\ge2\), this follows directly from the path sum.  The linear term
vanishes, and the quadratic term contributes only when \(j=2\).  The case
\(j=1\) follows by integrating the Hessian from \(u^\star\), where
\(\nabla g(u^\star)=0\).  This bound is uniform over the adaptive rotation
used in the source lower bound.

Choose a public \(s>0\) such that
\begin{equation}
\label{eq:h-public-scale}
 s\bar R\le\frac{c_{p,\beta}}{1+\bar Q_p},
\end{equation}
where \(c_{p,\beta}>0\) is chosen sufficiently small.  Define
\begin{equation}
\label{eq:h-public-transplant}
 \Phi_s(x)\defeq u_0+s^{-1}\Log_{x_0}(x),
 \qquad
 f_s\defeq g\circ\Phi_s,
 \qquad
 R_s\defeq s\bar R,
 \qquad
 \mu_s\defeq\frac{\bar\mu}{4s^2}.
\end{equation}
The minimizer of \(f_s\) is
\begin{equation}
\label{eq:h-hidden-minimizer}
 x_s^\star
 =\Exp_{x_0}\bigl(s(u^\star-u_0)\bigr),
 \qquad
 d(x_0,x_s^\star)\le R_s.
\end{equation}
Hence the initial point \(x_0\) is public while \(x_s^\star\) remains
hidden.

We next verify strong \(h\)-convexity on
\(\overline B(x_s^\star,\beta R_s)\).  Since \(\Exp_{x_0}\) is distance
expanding,
\begin{equation}
\label{eq:h-image-bound}
 \norm{\Phi_s(x)-u^\star}
 \le s^{-1}d(x,x_s^\star)
 \le\beta\bar R
 \qquad
 \bigl(x\in\overline B(x_s^\star,\beta R_s)\bigr).
\end{equation}
Put \(\rho=(\beta+1)s\bar R\).  The radial metric
\(dr^2+\sinh^2(r)d\theta^2\) in normal coordinates about \(x_0\)
\citep[Section~1.4.4]{Petersen2016} gives the
following dimension-free bounds when \(\rho\le1/2\).  For a unit vector
\(w\) at \(x\in\overline B(x_0,\rho)\), let
\(U=D\Log_{x_0}(x)[w]\). Then
\(\rho/\sinh\rho\le\norm U\le1\) and
\(\norm{\Gamma_x(U,U)}\le C\rho\).
Here \(\Gamma_x\) denotes the Christoffel map in these coordinates.
Combining these estimates with \eqref{eq:h-image-bound} yields
\begin{equation}
\label{eq:h-public-hessian}
 \operatorname{Hess}f_s
 \succeq\frac{\bar\mu}{2s^2}\Id=2\mu_s\Id,
 \qquad
 \norm{\grad f_s(y)}
 \le\frac{C_{p,\beta}\bar\mu(1+\bar Q_p)\bar R}{s}
 \quad
 \bigl(y\in\overline B(x_s^\star,\beta R_s)\bigr),
\end{equation}
after decreasing \(c_{p,\beta}\).  Indeed, for the vector \(U\) above, the
Hessian equals
\(s^{-2}D^2g[U,U]-s^{-1}\inner{\nabla g}{\Gamma_x(U,U)}\).
The first term is at least
\(\bar\mu/[s^2(\sinh\rho/\rho)^2]\), while the second has magnitude at
most
\(C_{p,\beta}\bar\mu(1+\bar Q_p)\bar R^2\).
Condition \eqref{eq:h-public-scale} absorbs the latter.

For \(a,b\in T_y\Hyp^n\) with
\(\norm a,\norm b\le2/3\), the normal-coordinate metric also gives
\begin{equation}
\label{eq:h-local-gram}
 d(\Exp_ya,\Exp_yb)^2
 \le\norm{a-b}^2+\frac12\norm{a\wedge b}^2.
\end{equation}
Here
\(\norm{a\wedge b}^2=\norm a^2\norm b^2-\inner{a}{b}^2\).
To see this, map the straight segment from \(a\) to \(b\) through
\(\Exp_y\). Its squared speed is
\(\norm{a-b}^2+
\bigl((\sinh r/r)^2-1\bigr)\norm{a\wedge b}^2/r^2\),
where \(r\le2/3\). The coefficient of the wedge term is less than
\(1/2\).

Fix \(y,z\in\overline B(x_s^\star,\beta R_s)\), and put
\(v=\grad f_s(y)\),
\(a=-v/\mu_s\), and \(b=\Log_yz\).  By
\eqref{eq:h-public-scale} and \eqref{eq:h-public-hessian},
\(\norm a,\norm b\le2/3\).  Strong geodesic convexity and
\eqref{eq:h-local-gram} now give
\begin{align*}
 f_s(z)-f_s(y)
 &\ge\inner{v}{b}+\mu_s\norm b^2 \\
 &\ge-\frac{\norm v^2}{2\mu_s}
 +\frac{\mu_s}{2}
 d\bigl(\Exp_y(-v/\mu_s),z\bigr)^2.
\end{align*}
Thus \(f_s\) is \(\mu_s\)-strongly \(h\)-convex on
\(\overline B(x_s^\star,\beta R_s)\).

It remains to transfer smoothness and oracle information.  In the public
normal coordinates,
\[
 f_s(\Exp_{x_0}X)=g(u_0+X/s),
 \qquad
 D^j(f_s\circ\Exp_{x_0})=s^{-j}D^jg.
\]
The metric and its coordinate derivatives through order \(p\) are bounded
on \(\norm X\le\rho\). At almost every point, expanding the covariant
derivatives gives
\[
 \norm{\nabla^{p+1}f_s}_{\mathrm{op}}
 \le C_p\sum_{j=1}^{p+1}s^{-j}
 \norm{D^jg}_{\mathrm{op}}
 \le\frac{C_{p,\beta}}{s^{p+1}}
 \left(\bar L_p+\frac{\bar\mu}{\bar R^{p-1}}\right),
\]
where \(\norm{D^{p+1}g}_{\mathrm{op}}\le\bar L_p\) is understood almost
everywhere. The second inequality uses \eqref{eq:h-source-derivatives} and
\(s\bar R\le1\). Integration along geodesics therefore gives the valid
budget
\begin{equation}
\label{eq:h-public-smoothness}
 L_{p,s}
 \defeq\frac{C_{p,\beta}}{s^{p+1}}
 \left(\bar L_p+\frac{\bar\mu}{\bar R^{p-1}}\right).
\end{equation}
Thus the argument does not assume an everywhere-defined \((p+1)\)-st
derivative of the source \(C^{p,1}\) function.

At a manifold query \(x\), one Euclidean query at \(\Phi_s(x)\) returns
all derivatives required by the covariant chain rule.  The remaining
coefficients depend only on the public map and the hyperbolic geometry.
The simulation is therefore exact and query for query.  Moreover,
\begin{equation}
\label{eq:h-public-parameter-gap}
 \frac{L_{p,s}R_s^{p-1}}{\mu_s}
 =4C_{p,\beta}(1+\bar Q_p),
 \qquad
 f_s(x)-f_s^\star
 =g(\Phi_s(x))-g^\star.
\end{equation}
The assumption \(\bar Q_p\ge q_p\) in \cref{thm:h-pth-oracle-lower}
permits a fixed threshold such that
\(\bar Q_p\ge q_p\) and
\(\bar Q_p\ge L_{p,s}R_s^{p-1}/(8C_{p,\beta}\mu_s)\).
The Euclidean lower bound therefore yields
\[
 T
 \ge c_p\bar Q_p^{2/(3p+1)}
 \ge c_p'
 \left(\frac{L_{p,s}R_s^{p-1}}{\mu_s}\right)^{2/(3p+1)}.
\]
The target accuracy satisfies \eqref{eq:euclidean-transfer-accuracy}, and
the source construction permits \(n\ge c_pT\). This completes the proof.
\end{proof}

\section{Bounded-curvature geodesically convex optimization}
\label{sec:g-upper-proof}

This section proves \cref{thm:g-upper}. The method is a higher-order
implementation of the projected accelerated scheme of
\citet{MartinezRubioPokutta2023}. Three ingredients are needed. A
Riemannian \(p\)-th-order query produces an approximate minimizer of a
regularized objective. A finite search couples its regularization parameter
to the observed displacement. The resulting steps satisfy an accelerated
potential inequality with distortion factor \(\xi=\xi_D\).

\subsection{\texorpdfstring{A regularized step from \(p\)-th-order
information}{A regularized step from p-th-order information}}
\label{sec:g-regularized-step}

For \(y\in X\), let
\(N_X(y)\defeq
\{n\in T_y\Man:\inner{n}{\Log_yz}_y\le0\text{ for every }z\in X\}\)
be the normal cone of \(X\). Given a base point \(x\in U\), a parameter
\(\lambda>0\), and a queried point \(w\in U\), define
\begin{equation}
\label{eq:g-taylor-field}
 {\cal G}_{p,w}(s)
 \defeq
 \sum_{j=0}^{p-1}\frac{1}{j!}
 (\nabla^j\grad f)_w[s^{\otimes j}]
 \qquad (s\in T_w\Man)
\end{equation}
and
\begin{equation}
\label{eq:g-taylor-residual}
 {\cal R}_{w;x,\lambda}(y)
 \defeq
 \inf_{n\in N_X(y)}
 \left\|
 {\cal G}_{p,w}(\Log_wy)
 +\PT_{y\to w}\!\left(-\frac{\Log_yx}{\lambda}+n\right)
 \right\|_w.
\end{equation}
The point used by the method is
\begin{equation}
\label{eq:g-selector}
 {\cal S}(w;x,\lambda)
 \in\argmin_{y\in X}
 \left\{d(w,y):
 {\cal R}_{w;x,\lambda}(y)
 \le\frac{L_p}{p!}d(w,y)^p\right\},
\end{equation}
with a fixed deterministic tie-breaking rule. This computation depends only
on the oracle reply at \(w\), the known geometry, and the explicit set \(X\).

\begin{lemma}[Finite-order regularized step]
\label{lem:g-selector}
Let \(I_X\) denote the indicator function of \(X\), and define
\(h_{x,\lambda}(y)\defeq
f(y)+d(x,y)^2/(2\lambda)+I_X(y)\) and
\(y^\star_{x,\lambda}\defeq\argmin h_{x,\lambda}\).
The feasible set in \eqref{eq:g-selector} is nonempty and its minimum is
attained. If \(w^+={\cal S}(w;x,\lambda)\), then
\begin{align}
 d(w,w^+)&\le d(w,y^\star_{x,\lambda}),
 \label{eq:g-selector-distance}\\
 \operatorname{dist}(0,\partial h_{x,\lambda}(w^+))
 &\le\frac{2L_p}{p!}d(w,w^+)^p,
 \label{eq:g-selector-subgradient}\\
 d(w^+,y^\star_{x,\lambda})
 &\le\frac{2\lambda L_p}{p!}
 d(w,y^\star_{x,\lambda})^p.
 \label{eq:g-selector-contraction}
\end{align}
Starting from \(w_0=x\) and setting
\(w_{t+1}={\cal S}(w_t;x,\lambda)\), a sufficiently small value of
\(\lambda L_pd(x,w_1)^{p-1}\) yields double-exponential convergence to
\(y^\star_{x,\lambda}\). After
\(O_p(\log\log(1/\Delta))\) further queries, the returned point
\(\widehat y\) satisfies
\begin{equation}
\label{eq:g-relative-regularized-error}
 h_{x,\lambda}(\widehat y)-h_{x,\lambda}(y^\star_{x,\lambda})
 \le
 \frac{\Delta}{78\lambda}d(x,y^\star_{x,\lambda})^2.
\end{equation}
\end{lemma}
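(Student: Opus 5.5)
I would follow the template of \cref{lem:h-residual-step}, now with the constraint \(X\) handled through the normal cone, and then add a strong-convexity argument for the contraction and iteration claims. Throughout, \(y^\star\defeq y^\star_{x,\lambda}\). It exists and is unique because \(h_{x,\lambda}\) is \((\mu+1/\lambda)\)-strongly geodesically convex on the compact convex set \(X\).

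\emph{Witness and attainment.} First-order optimality for \(y^\star\) gives \(0\in\grad f(y^\star)-\lambda^{-1}\Log_{y^\star}x+N_X(y^\star)\). Choose \(n^\star\in N_X(y^\star)\) realizing this. The covariant Taylor remainder along \(\gamma_{w,y^\star}\) is licensed because \(w,y^\star\in U\) and \(U\) is convex. It gives
\[
\norm{\PT_{y^\star\to w}\grad f(y^\star)-{\cal G}_{p,w}(\Log_wy^\star)}_w\le \frac{L_p}{p!}d(w,y^\star)^p.
\]
With \(n=n^\star\), this shows \({\cal R}_{w;x,\lambda}(y^\star)\le\frac{L_p}{p!}d(w,y^\star)^p\), so \(y^\star\) is feasible. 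Next, I need the feasible set to be closed. This requires lower semicontinuity of \(y\mapsto{\cal R}\), which holds because the graph of \(N_X\) is closed and the infimum is over a closed cone that can be truncated to a bounded set. The feasible set is also bounded, being contained in \(X\). Compactness then gives a closest point \(w^+\), and \eqref{eq:g-selector-distance} follows immediately. For \eqref{eq:g-selector-subgradient}, I take a near-optimal \(n\in N_X(w^+)\) and apply a second Taylor remainder along \(\gamma_{w,w^+}\). The vector \(\grad f(w^+)-\lambda^{-1}\Log_{w^+}x+n\) lies in \(\partial h_{x,\lambda}(w^+)\), and after isometric transport its norm is at most \(\frac{2L_p}{p!}d(w,w^+)^p\).

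\emph{Contraction.} Strong monotonicity of \(\partial h_{x,\lambda}\) with modulus at least \(1/\lambda\), paired with \(0\in\partial h(y^\star)\), gives \(d(w^+,y^\star)\le\lambda\operatorname{dist}(0,\partial h(w^+))\). Here I use the Riemannian monotonicity inequality \(\inner{g}{-\Log_{w^+}y^\star}\ge\frac1\lambda d^2\), which follows from strong \(g\)-convexity along the geodesic. Combining this with \eqref{eq:g-selector-subgradient} and \eqref{eq:g-selector-distance} yields \eqref{eq:g-selector-contraction}.

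\emph{Iteration.} Set \(e_t=d(w_t,y^\star)\). From \eqref{eq:g-selector-contraction}, \(e_{t+1}\le c\lambda L_pe_t^p\). I also need \(e_0=d(x,y^\star)\) to be comparable to \(d(x,w_1)\). By the argument above applied with \(w=x\), \(d(w_1,y^\star)\le\frac{2\lambda L_p}{p!}e_0^{p}\). Hence, if \(\lambda L_pd(x,w_1)^{p-1}\) is small, a bootstrap gives \(e_0\le 2d(x,w_1)\) and \(q_0\defeq c\lambda L_pe_0^{p-1}\le1/2\). I expect this bootstrap to be the main obstacle: the bound on \(e_0\) cannot simply be assumed, so I would argue by contradiction on the size of \(e_0\), or use the estimate \(e_0\le d(x,w_1)+e_1\). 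Once it is in place, induction gives \(e_t\le q_0^{(p^t-1)/(p-1)}e_0\). The queries stay in \(U\) because every iterate lies in \(X\).

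\emph{Function-value bound.} To convert distance into a function-value gap, I use \(h(\widehat y)-h(y^\star)\le\inner{g}{-\Log_{\widehat y}y^\star}\) for \(g\in\partial h(\widehat y)\). By \eqref{eq:g-selector-subgradient}, this is at most \(\frac{2L_p}{p!}d(w_t,w_{t+1})^p\,e_{t+1}\), which is bounded by a multiple of \(\lambda^{-1}e_t^2\) times a doubly exponentially small factor. Choosing \(t=O_p(\log\log(1/\Delta))\) makes it at most \(\frac{\Delta}{78\lambda}e_0^2\), which is \eqref{eq:g-relative-regularized-error}.
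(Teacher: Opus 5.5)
\textbf{The comparison step you flag is a real gap.} Your outline follows the paper's proof closely: the Taylor witness at \(y^\star_{x,\lambda}\), the closest feasible point obtained from compactness and closedness of the normal-cone graph, a second Taylor remainder at \(w^+\), and strong convexity of \(h_{x,\lambda}\) for the contraction. But the step you call the main obstacle is left unresolved, and neither suggested fix closes it as written.

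Write \(s_1\defeq d(x,w_1)\). Feeding \(e_1\le\frac{2\lambda L_p}{p!}e_0^p\) into \(e_0\le s_1+e_1\) gives \(e_0\le s_1+\frac{2\lambda L_p}{p!}e_0^p\). Every sufficiently large \(e_0\) satisfies this inequality, namely any \(e_0\) with \(\frac{2\lambda L_p}{p!}e_0^{p-1}\ge1\). So neither it nor a contradiction argument built on it can bound the unobserved \(e_0\) by the observed \(s_1\). This bound is exactly what you need. The lemma's hypothesis is stated in terms of the observable quantity \(\lambda L_pd(x,w_1)^{p-1}\). The contraction factor \(q_0\) and the \(O_p(\log\log(1/\Delta))\) count are controlled only once \(e_0\) is tied to \(s_1\).

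\textbf{The fix is already in your contraction paragraph.} Stop before you replace \(d(w,w^+)\) by \(d(w,y^\star)\). Strong convexity together with \eqref{eq:g-selector-subgradient} gives
\[
d(w^+,y^\star)\le\lambda\operatorname{dist}(0,\partial h_{x,\lambda}(w^+))\le\frac{2\lambda L_p}{p!}d(w,w^+)^p .
\]
Take \(w=x\) and put \(\phi\defeq\lambda L_ps_1^{p-1}\). Then \(e_1\le\frac{2\phi}{p!}s_1\), and hence \(s_1\le e_0\le(1+2\phi/p!)s_1\). This is precisely the paper's \eqref{eq:g-observed-exact-comparison}. It yields \(q_0=\frac{2\lambda L_p}{p!}e_0^{p-1}\le\frac{2\phi}{p!}(1+2\phi/p!)^{p-1}\), which is at most \(1/2\) once \(\phi\) is small. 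From there your induction \(e_t\le q_0^{(p^t-1)/(p-1)}e_0\) goes through as written.

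\textbf{Your final step is a valid variant.} You convert distance to a function-value gap via \(h(\widehat y)-h(y^\star)\le\norm{g}\,d(\widehat y,y^\star)\) and use an a priori iteration count. This is a valid alternative to the paper's route, which bounds the gap by \(\frac{\lambda}{2}\operatorname{dist}(0,\partial h_{x,\lambda}(\widehat y))^2\) and stops via a computable residual test. Both routes rely on the comparison above to make the count depend only on \(p\) and \(\Delta\).
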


\begin{proof}
The covariant Taylor formula along the geodesic from \(w\) to \(y\) gives
\begin{equation}
\label{eq:g-vector-taylor-remainder}
 \left\|
 \PT_{y\to w}\grad f(y)-{\cal G}_{p,w}(\Log_wy)
 \right\|_w
 \le\frac{L_p}{p!}d(w,y)^p.
\end{equation}
The optimality condition at \(y^\star_{x,\lambda}\), followed by
\eqref{eq:g-vector-taylor-remainder}, proves that this point is feasible in
\eqref{eq:g-selector}. Compactness of \(X\) and closedness of the normal-cone
graph give existence of a closest feasible point. This proves
\eqref{eq:g-selector-distance}. Applying
\eqref{eq:g-vector-taylor-remainder} again at \(w^+\) gives
\eqref{eq:g-selector-subgradient}. Since \(h_{x,\lambda}\) is
\(1/\lambda\)-strongly \(g\)-convex, the subgradient inequality gives
\eqref{eq:g-selector-contraction}.

Write \(r^\star=d(x,y^\star_{x,\lambda})\), \(s_1=d(x,w_1)\), and
\(\phi=\lambda L_ps_1^{p-1}\). The first two conclusions imply
\begin{equation}
 \label{eq:g-observed-exact-comparison}
 s_1\le r^\star\le(1+2\phi/p!)s_1.
\end{equation}
Writing \(e_t=d(w_t,y^\star_{x,\lambda})\), repeated use of
\eqref{eq:g-selector-contraction} gives
\(e_{t+1}\le(2\lambda L_p/p!)e_t^p\). If \(r^\star=0\), the feasible
witness \(x=y^\star_{x,\lambda}\) forces \(w_1=x\), and the conclusion is
immediate. Suppose \(r^\star>0\). By
\eqref{eq:g-observed-exact-comparison}, a sufficiently small upper bound on
\(\phi\) ensures
\[
 \frac{e_t}{r^\star}
 \le
 \left(\frac{2\lambda L_p(r^\star)^{p-1}}{p!}\right)^{-1/(p-1)}
 \left[
 \left(\frac{2\lambda L_p(r^\star)^{p-1}}{p!}\right)^{1/(p-1)}
 \frac{e_1}{r^\star}
 \right]^{p^{t-1}},
\]
where the quantity in brackets is strictly smaller than one. Thus the
computable residual bound in \eqref{eq:g-selector-subgradient} reaches
\[
 \lambda\operatorname{dist}(0,\partial h_{x,\lambda}(\widehat y))
 \le\sqrt{\Delta/156}\,d(x,\widehat y)
\]
after \(O_p(\log\log(1/\Delta))\) further queries. Requiring the left-hand
side also to be at most \(d(x,\widehat y)/2\) gives
\(d(\widehat y,y^\star_{x,\lambda})\le d(x,\widehat y)/2\), hence
\(d(x,\widehat y)\le2r^\star\). Strong convexity of
\(h_{x,\lambda}\) now gives
\[
 h_{x,\lambda}(\widehat y)-h_{x,\lambda}(y^\star_{x,\lambda})
 \le\frac{\lambda}{2}\operatorname{dist}
 (0,\partial h_{x,\lambda}(\widehat y))^2
 \le\frac{\Delta}{78\lambda}(r^\star)^2,
\]
which is \eqref{eq:g-relative-regularized-error}.
\end{proof}

\subsection{The bounded-curvature method}
\label{sec:g-method}

Fix \(3/4<\gamma<8/9\) and put
\(\bar q=(4\gamma-3)/6\). At an iteration with weight \(A>0\), projected
dual vector \(\bar u\), and current point \(y\), a trial \(q\in(0,\bar q]\)
sets
\begin{equation}
\label{eq:g-trial-parameters}
 a(q)\defeq qA,
 \qquad
 \lambda(q)\defeq\frac{Aq^2}{\gamma(\xi+q)},
 \qquad
 x(q)\defeq\Exp_y\!\left(\frac{q}{1+q}\bar u\right).
\end{equation}
Query at \(x(q)\), compute
\(w_1(q)={\cal S}(x(q);x(q),\lambda(q))\), and observe
\begin{equation}
\label{eq:g-search-statistic}
 \phi(q)\defeq
 \lambda(q)L_pd(x(q),w_1(q))^{p-1}.
\end{equation}
The search first tests \(\bar q\). If \(\phi(\bar q)\le\bar\vartheta_p\),
where \(\bar\vartheta_p>0\) depends only on \(p\), it accepts the cap.
Otherwise it halves \(q\) until it
finds a low trial and bisects the resulting interval. The accepted low
endpoint has
\begin{equation}
\label{eq:g-large-step-band}
 0<\theta_-\le
 \lambda_kL_pd(x_k,y_k)^{p-1}
 \le\theta_+,
\end{equation}
where \(\theta_-\) and \(\theta_+\) depend only on \(p\). The only exception
is an accepted cap with statistic below \(\theta_-\), which we call a
\emph{capped step}. In every trial, \cref{lem:g-selector} is iterated until
\eqref{eq:g-relative-regularized-error} holds with
\(\Delta_k=(k+1)^{-2}\).
The complete deterministic stopping rule and its query bound are given in
\cref{lem:g-parameter-search}.

The resulting epoch is given in \cref{alg:g-epoch}. Here
\(\Pi_{\overline B(0,D)}\) is Euclidean projection in the current tangent
space. The counter \(m\) records the number of steps satisfying
\eqref{eq:g-large-step-band}.

\begin{algorithm}[H]
\caption{Riemannian \(p\)-th-order epoch for bounded-curvature
\(g\)-convex optimization}
\label{alg:g-epoch}
\begin{algorithmic}
  \Require Center \(z\in X\), distance bound \(r\), large-step budget \(N\),
  and target weight \(A_{\rm tar}\)
  \State \(\lambda_0\gets c_p/[L_p(4r)^{p-1}]\),
  \(A_0\gets200\xi\lambda_0\)
  \State Fix \(\Delta_0\in(0,1)\). Starting at \(z\), iterate
  \eqref{eq:g-selector} for \(h_{z,\lambda_0}\) to the accuracy in
  \eqref{eq:g-relative-regularized-error} and call the result \(y_0\)
  \State \(u_0\gets0\), \(k\gets1\), \(m\gets0\)
  \While{\(m<N\) and \(A_{k-1}<A_{\rm tar}\)}
    \State \(\bar u_{k-1}\gets
    \Pi_{\overline B(0,D)}(u_{k-1})\)
    \State Use the finite search in \cref{lem:g-parameter-search} to obtain
    \(q_k,a_k,\lambda_k,x_k,y_k\)
    \State \(\widetilde u_k\gets
    \Log_{x_k}(\Exp_{y_{k-1}}\bar u_{k-1})\),
    \(v_k\gets-\lambda_k^{-1}\Log_{x_k}y_k\)
    \State \(u_k\gets
    \PT_{x_k\to y_k}(\widetilde u_k-a_kv_k/\xi)+\Log_{y_k}x_k\)
    \State \(A_k\gets A_{k-1}+a_k/\xi\)
    \State If \eqref{eq:g-large-step-band} holds, \(m\gets m+1\)
    \State \(k\gets k+1\)
  \EndWhile
  \State \textbf{return} \(y_{k-1}\)
\end{algorithmic}
\end{algorithm}

\begin{lemma}[Finite parameter search]
\label{lem:g-parameter-search}
The preceding search terminates after a logarithmic number of trial queries.
It returns either a step satisfying \eqref{eq:g-large-step-band} or a capped
step with
\begin{equation}
\label{eq:g-capped-growth}
 \frac{A_k}{A_{k-1}}=1+\frac{\bar q}{\xi}.
\end{equation}
Before \(A_k\ge A_{\rm tar}\), there are at most
\(O(\xi\log_+(A_{\rm tar}/A_0))\) capped steps.
\end{lemma}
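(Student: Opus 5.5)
We would prove \cref{lem:g-parameter-search} by mirroring the search analysis in the proof of \cref{prop:h-pth-epoch}, with the trial statistic \(\phi(q)\) in \eqref{eq:g-search-statistic} playing the role of \(\lambda L_p\rho^{p-1}\). The argument has three parts: monotonicity and limits of \(\phi\) in \(q\), a bracketing/bisection argument giving \eqref{eq:g-large-step-band} at the low endpoint, and a counting argument for capped steps.

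\emph{Termination of halving.} By \eqref{eq:g-trial-parameters}, \(\lambda(q)\le Aq^2/(\gamma\xi)\to0\) as \(q\downarrow0\). By \eqref{eq:g-selector-distance} and nonexpansiveness of the resolvent, \(d(x(q),w_1(q))\le d(x(q),y^\star_{x(q),\lambda(q)})\le 2D\), since everything lies in \(X\) and \(U\). Hence \(\phi(q)\le\lambda(q)L_p(2D)^{p-1}\), and a trial is low as soon as \(q^2\le\gamma\xi\bar\vartheta_p/(AL_p(2D)^{p-1})\). Because \(A\ge A_0=200\xi c_p/[L_p(4r)^{p-1}]\) and \(A<A_{\rm tar}\), the number of halvings is \(O_p(\log(2+A_{\rm tar}L_pD^{p-1}/\xi))\), which is logarithmic.

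\emph{Bisection and the band.} Bisect on \(q\) until \(\lambda(q_L)/\lambda(q_H)\ge1-\delta_p\) and \(2|q_H-q_L|\,\norm{\bar u}\le\delta_p d(x(q_H),w_1(q_H))\). These are the analogues of the two stopping rules in the \(h\)-convex search. Nonexpansiveness of the resolvent in the base point, together with the resolvent identity \(y^\star_{x,\lambda'}=y^\star_{x,\lambda}\)-type comparison, gives \(r_L\ge(1-2\delta_p)r_H\) for the exact displacements. The observed-to-exact comparison \eqref{eq:g-observed-exact-comparison} then converts this to the observed statistic, exactly as in \cref{prop:h-pth-epoch}. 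The accepted low endpoint therefore has \(\phi(q_L)\ge c_p\bar\vartheta_p\), which gives \(\theta_-\). For \(\theta_+\): at \(q_L\), the iterated selector from \cref{lem:g-selector} returns \(y_k\) with \(d(x_k,y_k)\le 2r^\star\le 2(1+2\phi/p!)d(x_k,w_1)\), so \(\lambda_kL_pd(x_k,y_k)^{p-1}\le C_p\bar\vartheta_p\defeq\theta_+\). The bisection length is logarithmic because \(d(x(q_H),w_1(q_H))\) is bounded below by the high-statistic condition, \(\phi>\bar\vartheta_p\), with \(\lambda\le C A\). Each trial also runs the \(O_p(\log\log(1/\Delta_k))=O_p(\log\log(k+2))\) inner iterations of \cref{lem:g-selector}, which is again logarithmic. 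If the cap is accepted, the same upper bound gives the band's upper side, so the only exception is a cap with statistic below \(\theta_-\), i.e.\ a capped step.

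\emph{Counting capped steps.} At a capped step \(q_k=\bar q\), so \(a_k=\bar qA_{k-1}\). Then \(A_k=A_{k-1}+a_k/\xi=A_{k-1}(1+\bar q/\xi)\), which is \eqref{eq:g-capped-growth}. All other steps have \(A_k\ge A_{k-1}\), so after \(M\) capped steps \(A\ge A_0(1+\bar q/\xi)^M\). Since \(\log(1+\bar q/\xi)\ge\bar q/(2\xi)\) for \(\xi\ge1\) and \(\bar q<1\), reaching \(A_{\rm tar}\) takes at most \(M\le 2\xi\bar q^{-1}\log_+(A_{\rm tar}/A_0)+1\) capped steps, i.e.\ \(O(\xi\log_+(A_{\rm tar}/A_0))\).

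The main obstacle is the lower-threshold transfer in the bisection. Unlike the \(h\)-convex case, both the base point \(x(q)\) and the regularization \(\lambda(q)\) move with \(q\), and the step \(y_k\) is an approximate resolvent rather than \(w_1\). I would handle the base-point motion with the bound \(d(x(q_L),x(q_H))\le|q_H-q_L|\norm{\bar u}\), using \(q/(1+q)\) being \(1\)-Lipschitz and comparison in Hadamard space. I would handle the change of \(\lambda\) with the standard resolvent monotonicity \(r(\lambda')\ge(\lambda'/\lambda)r(\lambda)\) for \(\lambda'\le\lambda\).
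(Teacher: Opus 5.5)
Your proposal follows the paper's proof essentially step for step. Halving terminates because \(\lambda(q)\downarrow0\) while \(r(q)\le2D\). Bisection uses essentially the same two stopping rules. Base-point nonexpansiveness plus the \(\lambda\)-comparison gives \(r_L\ge(1-2\delta_p)r_H\); the paper derives that comparison from the resolvent identity \(J_{\lambda_H}(u)=J_{\lambda_L}(z)\), with \(z\) on the geodesic from \(J_{\lambda_H}(u)\) to \(u\), rather than citing it as standard. Capped steps are then counted geometrically.

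One step is left implicit: the lower side of the band for the refined point. Your bound \(\phi(q_L)\ge c_p\bar\vartheta_p\) concerns \(w_1\), whereas \eqref{eq:g-large-step-band} concerns \(y_k\). You therefore also need \(d(x_k,y_k)\ge\frac23 r^\star\ge\frac23 s(q_L)\). This follows from the refinement rule \(d(y_k,y^\star)\le d(x_k,y_k)/2\) in \cref{lem:g-selector}, and the paper records it as \(\frac23 s(q_L)\le d(x(q_L),y_k)\).
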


\begin{proof}
Write
\(J_\lambda(u)\defeq
\argmin_{z\in X}\{f(z)+d(u,z)^2/(2\lambda)\}\) and
\(y^\star(q)=J_{\lambda(q)}(x(q))\). Set \(s(q)=d(x(q),w_1(q))\) and
\(r(q)=d(x(q),y^\star(q))\).
By \eqref{eq:g-observed-exact-comparison},
\begin{equation}
\label{eq:g-search-observed-exact}
 s(q)\le r(q),
 \qquad
 r(q)\le\left(1+\frac{2\phi(q)}{p!}\right)s(q)
 \quad\text{if }\phi(q)\le\bar\vartheta_p,
\end{equation}
where \(\bar\vartheta_p>0\) is a sufficiently small constant depending only
on \(p\).

We first record the comparison used by the bisection. The resolvent of the
convex function \(f+I_X\) is nonexpansive on a Hadamard manifold. Moreover,
if \(0<\lambda_L\le\lambda_H\), then
\begin{equation}
\label{eq:g-resolvent-parameter}
 d(J_{\lambda_L}(u),J_{\lambda_H}(u))
 \le\left(1-\frac{\lambda_L}{\lambda_H}\right)
 d(u,J_{\lambda_H}(u)).
\end{equation}
Indeed, for \(y=J_{\lambda_H}(u)\) and
\(z=\Exp_y((\lambda_L/\lambda_H)\Log_yu)\), the proximal optimality
condition gives \(y=J_{\lambda_L}(z)\). Nonexpansiveness then gives
\eqref{eq:g-resolvent-parameter}. Applying nonexpansiveness once more yields
\begin{equation}
\label{eq:g-resolvent-comparison}
 d(y^\star(q_L),y^\star(q_H))
 \le d(x(q_L),x(q_H))
 +\left(1-\frac{\lambda(q_L)}{\lambda(q_H)}\right)r(q_H).
\end{equation}

If \(\phi(\bar q)>\bar\vartheta_p\), halve \(q\) until
\(\phi(q_L)\le\bar\vartheta_p<\phi(q_H)\). This takes finitely many trials
because \(r(q)\le2D\) and \(\lambda(q)\downarrow0\) as \(q\downarrow0\).
Bisect the bracket until, with \(\delta_p=1/(8p)\),
\begin{equation}
\label{eq:g-search-stopping}
 \frac{\lambda(q_L)}{\lambda(q_H)}\ge1-\delta_p,
 \qquad
 2D\left|\frac{q_H}{1+q_H}-\frac{q_L}{1+q_L}\right|
 \le\delta_p s(q_H).
\end{equation}
Since the trial points lie on the same radial geodesic and
\(\norm{\bar u}\le D\),
\[
 d(x(q_L),x(q_H))
 \le D\left|\frac{q_H}{1+q_H}-\frac{q_L}{1+q_L}\right|.
\]
Equations \eqref{eq:g-resolvent-comparison} and
\eqref{eq:g-search-stopping}, together with \(s(q_H)\le r(q_H)\), imply
\(r(q_L)\ge(1-2\delta_p)r(q_H)\).
The high endpoint and \eqref{eq:g-search-observed-exact} at the low endpoint
therefore give
\begin{equation}
\label{eq:g-search-lower-band}
 \phi(q_L)\ge
 \frac{(1-\delta_p)(1-2\delta_p)^{p-1}}
 {(1+2\bar\vartheta_p/p!)^{p-1}}\bar\vartheta_p
 \defeq\underline\vartheta_p>0.
\end{equation}
Thus the low endpoint satisfies a fixed two-sided band. The refinement in
\cref{lem:g-selector} preserves the displacement up to fixed factors,
\[
 \frac23s(q_L)\le d(x(q_L),y_k)
 \le2\left(1+\frac{2\bar\vartheta_p}{p!}\right)s(q_L).
\]
This proves \eqref{eq:g-large-step-band}, with constants depending only on
\(p\).

Every high endpoint satisfies
\(s(q_H)>(\bar\vartheta_p/(\lambda(\bar q)L_p))^{1/(p-1)}\).
Hence the two tolerances in \eqref{eq:g-search-stopping} have explicit
positive lower bounds. Halving, bisection, and the final refinement use
\[
 O_p\!\left(
 1+\log_+\frac{A_{k-1}L_pD^{p-1}}{\xi}
 +\log\log(k+2)
 \right)
\]
queries. If the cap is accepted with
\(\phi(\bar q)<\underline\vartheta_p\), then
\(a_k=\bar qA_{k-1}\), which gives \eqref{eq:g-capped-growth}. Before
\(A_k\ge A_{\rm tar}\), the number of such steps is at most
\(\log(A_{\rm tar}/A_0)/\log(1+\bar q/\xi)
=O(\xi\log_+(A_{\rm tar}/A_0))\).
\end{proof}

\subsection{Potential decrease and the epoch rate}
\label{sec:g-potential}

For the variables in \cref{alg:g-epoch}, define
\begin{equation}
\label{eq:g-potential}
 \Psi_k\defeq
 A_k(f(y_k)-f^\star)
 +\frac12\norm{u_k-\Log_{y_k}x^\star}_{y_k}^2
 +\frac{\xi-1}{2}\norm{u_k}_{y_k}^2.
\end{equation}

\begin{lemma}[One-step potential inequality]
\label{lem:g-one-step}
Suppose that \(y_k\) satisfies
\eqref{eq:g-relative-regularized-error}. If
\begin{equation}
\label{eq:g-weight-sandwich}
 \frac{3\lambda_k}{4}\xi(A_{k-1}+a_k)
 \le a_k^2
 \le\gamma\lambda_k(\xi A_{k-1}+a_k),
 \qquad
 \frac{\xi\lambda_k}{a_k}\le\frac56,
\end{equation}
then
\begin{equation}
\label{eq:g-potential-decrease}
 (1-\Delta_k)\Psi_k
 +\frac{8/9-\gamma}{4}
 \frac{A_kd(x_k,y_k)^2}{\lambda_k}
 \le\Psi_{k-1}.
\end{equation}
The parameters in \eqref{eq:g-trial-parameters} satisfy
\eqref{eq:g-weight-sandwich}.
\end{lemma}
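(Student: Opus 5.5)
The proof adapts the Martínez-Rubio--Pokutta potential argument for projected Riemannian acceleration to an inexact proximal step. In that argument the exact gradient step is replaced by the approximate regularized minimizer \(y_k\). I will work at \(y_k\) and use three ingredients.

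The first is the approximate proximal inequality. From \eqref{eq:g-relative-regularized-error} and \(1/\lambda_k\)-strong \(g\)-convexity of \(h_{x_k,\lambda_k}\), comparing \(h_{x_k,\lambda_k}(y_k)\) with \(h_{x_k,\lambda_k}(z)\) for \(z\in\{x^\star,y_{k-1}\}\) gives
\[
f(y_k)\le f(z)+\inner{v_k}{\Log_{y_k}z}\;\text{(up to transport)}-\frac{d(x_k,y_k)^2}{2\lambda_k}\cdot c+\text{error}.
\]
Here \(v_k=-\lambda_k^{-1}\Log_{x_k}y_k\) plays the role of an (approximate) subgradient of \(f+I_X\), transported appropriately. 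The error is bounded by \(\frac{\Delta_k}{78\lambda_k}d(x_k,y^\star)^2\). It is converted into a fraction \(\Delta_k\Psi_k\) using \(d(x_k,y^\star)\le C d(x_k,y_k)\), which follows from the contraction in \cref{lem:g-selector}, together with the reserve term \(A_kd(x_k,y_k)^2/\lambda_k\).

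The second ingredient is the Zhang--Sra distortion bound. I apply the law-of-cosines comparison with factor \(\zeta_\kappa\le\xi\) to move the dual point from \(y_{k-1}\) to \(x_k\) and then to \(y_k\). The extra term \(\frac{\xi-1}{2}\norm{u_k}^2\) in \eqref{eq:g-potential} absorbs the metric distortion, exactly as in the MRP analysis. Projection onto \(\overline B(0,D)\) does not increase \(\norm{u-\Log x^\star}\), since \(\norm{\Log_{y}x^\star}\le D\).

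The third ingredient is the coupling step. Combining \(A_{k-1}\times\) (the inequality at \(z=y_{k-1}\)) with \(a_k/\xi\times\) (the inequality at \(z=x^\star\)), and using that \(x_k\) lies on the geodesic from \(y_{k-1}\) toward \(\Exp_{y_{k-1}}\bar u_{k-1}\) with ratio \(q/(1+q)\), the linear terms cancel. This leaves
\[
\frac{a_k^2}{2\xi}\norm{v_k}^2-\frac{A_k}{2\lambda_k}d(x_k,y_k)^2\cdot(\text{const}).
\]
Since \(\norm{v_k}=d(x_k,y_k)/\lambda_k\), the upper half of \eqref{eq:g-weight-sandwich}, \(a_k^2\le\gamma\lambda_k(\xi A_{k-1}+a_k)\), makes this at most \(-(8/9-\gamma)A_kd^2/(4\lambda_k)\) once the curvature constant \(8/9\) coming from the approximate inequality is tracked. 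The lower half and \(\xi\lambda_k/a_k\le5/6\) control the weight of the \(\Log_{y_k}x_k\) correction in \(u_k\), which has size \(d(x_k,y_k)\), against the \(\xi-1\) term.

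Finally, I verify \eqref{eq:g-weight-sandwich} for \eqref{eq:g-trial-parameters} by direct algebra. With \(a=qA\) and \(\lambda=Aq^2/(\gamma(\xi+q))\), the upper inequality holds with equality. The lower inequality reduces to \(\tfrac34\xi(1+q)\le\gamma(\xi+q)\), which holds since \(\gamma>3/4\) and \(\xi\ge1\). The last condition reduces to \(\xi q/(\gamma(\xi+q))\le5/6\), which follows from \(q\le\bar q=(4\gamma-3)/6\).

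I expect the main obstacle to be the bookkeeping in the coupling step. Three transports must be combined without losing more than a constant: \(u\) moves from \(y_{k-1}\) to \(x_k\) through \(\Log_{x_k}\Exp_{y_{k-1}}\), then to \(y_k\) by parallel transport plus \(\Log_{y_k}x_k\). Each must be paired with the correct Zhang--Sra inequality, with the cosine lower bound on one side and the \(\zeta\) upper bound on the other. To keep this manageable, I will follow MRP's lemma structure verbatim, with \(\xi\) substituted, and isolate the inexactness into the single \(\Delta_k\) error term.
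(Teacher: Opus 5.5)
Your skeleton matches the paper's: the Mart\'{\i}nez-Rubio--Pokutta potential, the two translation inequalities balanced through the lower half of \eqref{eq:g-weight-sandwich} and \(\xi\lambda_k/a_k\le5/6\), the coupling identity from the position of \(x_k\), and the closing algebra. However, your treatment of the inexactness fails. The dual update uses \(v_k=-\lambda_k^{-1}\Log_{x_k}y_k\), not the unknown exact slope \(-\lambda_k^{-1}\Log_{x_k}y^\star\). So the lower model available for \(z\in X\) is
\[
 f(z)\ge f(y_k)+\inner{v_k}{\Log_{x_k}z}+\frac{\lambda_k}{2}\norm{v_k}^2-\varepsilon(z),
 \qquad
 \varepsilon(z)=-\frac{1}{\lambda_k}\inner{\Log_{x_k}y_k-\Log_{x_k}y^\star}{\Log_{x_k}z}+e,
\]
with \(e=h_{x_k,\lambda_k}(y_k)-h_{x_k,\lambda_k}(y^\star)\). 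The first part of \(\varepsilon(z)\) is linear in \(\Log_{x_k}z\). At \(z=x^\star\) or \(z=y_{k-1}\) it can be as large as \(\sqrt{2e/\lambda_k}\,d(x_k,z)\). It is therefore not bounded by \(\frac{\Delta_k}{78\lambda_k}d(x_k,y^\star)^2\), as you assert. Nor can it be charged to the reserve \(A_kd(x_k,y_k)^2/\lambda_k\), which is a per-step movement energy and may be far smaller than the squared distance to \(x^\star\). Indeed, if the error really were of order \(\Delta_kd(x_k,y_k)^2/\lambda_k\), the reserve would absorb it and no factor \(1-\Delta_k\) would appear.

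The missing step is as follows. Weight the two cross terms by \(a_k/\xi\) and \(A_{k-1}\), and use the coupling identity \(A_{k-1}\Log_{x_k}y_{k-1}=-a_k\widetilde u_k\) to merge them into one inner product with \(\Log_{x_k}x^\star-\xi\widetilde u_k\). Its norm is at most \(\sqrt{\xi Q_x^-}\), where \(Q_x^-\defeq\norm{\widetilde u_k-\Log_{x_k}x^\star}^2+(\xi-1)\norm{\widetilde u_k}^2\). Young's inequality with \(d(y_k,y^\star)\le\sqrt{2\lambda_ke}\) then bounds the weighted error by \(\frac{\Delta_k}{2}Q_x^-+\bigl(A_k+\frac{a_k^2}{\Delta_k\lambda_k\xi}\bigr)e\). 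The factor \(1/\Delta_k\) cancels because \eqref{eq:g-relative-regularized-error} gives \(e\le\frac{\Delta_k}{39\lambda_k}d(x_k,y_k)^2\). So the \(e\)-terms consume only the slack between \(\frac12\) and \(\frac49\) in the coefficient of \(\lambda_kA_k\norm{v_k}^2\); this, not curvature, is where \(8/9\) comes from. The term \(\frac{\Delta_k}{2}Q_x^-\) is at most \(\Delta_k\Psi_{k-1}\), because the first translation inequality and the projection bound \(Q_x^-\) by the quadratic part of \(2\Psi_{k-1}\). Dividing by \(1+\Delta_k\) is what produces \((1-\Delta_k)\Psi_k\).

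A smaller slip: the lower half of \eqref{eq:g-weight-sandwich} reduces to \(\frac34\xi(1+q)\le\gamma(\xi+q)\). This does not follow from \(\gamma>3/4\) and \(\xi\ge1\) alone; it fails, e.g., for \(\gamma=4/5\), \(\xi=100\), \(q=1/2\). You need the cap \(q\le\bar q=(4\gamma-3)/6\).
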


\begin{proof}
Suppress the iteration index and write
\(A=A_{k-1}\), \(a=a_k\), \(B=A_k=A+a/\xi\),
\(\lambda=\lambda_k\), and \(\Delta=\Delta_k\). Also write
\(x=x_k\), \(y=y_k\), \(y^\star=\argmin h_{x,\lambda}\), and
\(v=-\lambda^{-1}\Log_xy\).
Let \(u^-=u_{k-1}\), \(\bar u^-=\Pi_{\overline B(0,D)}(u^-)\), and define
the vectors \(\widetilde u=\Log_x(\Exp_{y_{k-1}}\bar u^-)\) and
\(\widetilde u^+=\widetilde u-(a/\xi)v\) in \(T_x\Man\).
The update in \cref{alg:g-epoch} gives
\(u_k=\PT_{x\to y}\widetilde u^++\Log_yx\).

We first control the inexact regularized step. Put
\(e=h_{x,\lambda}(y)-h_{x,\lambda}(y^\star)\). For every \(z\in X\),
the proximal optimality condition and the Hadamard cosine inequality give
\begin{equation}
\label{eq:g-inexact-lower-model}
 f(z)\ge f(y)+\inner{v}{\Log_xz}_x
 +\frac\lambda2\norm{v}_x^2-\varepsilon(z),
 \qquad
 \varepsilon(z)=
 -\frac1\lambda\inner{\Log_xy-\Log_xy^\star}{\Log_xz}_x+e.
\end{equation}
Define \(Q_x^-\defeq\norm{\widetilde u-\Log_xx^\star}_x^2
+(\xi-1)\norm{\widetilde u}_x^2\).
The coupling identity
\[
 A\inner{t}{-\Log_xy_{k-1}}_x
 =-a\inner{t}{-\widetilde u}_x
 \qquad(t\in T_x\Man)
\]
and Young's inequality imply
\begin{equation}
\label{eq:g-inexact-error-combination}
 \frac a\xi\varepsilon(x^\star)+A\varepsilon(y_{k-1})
 \le
 \left(B+\frac{a^2}{\Delta\lambda\xi}\right)e
 +\frac\Delta2Q_x^-.
\end{equation}
Indeed,
\(d(y,y^\star)\le\sqrt{2\lambda e}\) and
\(\norm{\Log_xx^\star-\xi\widetilde u}_x\le\sqrt{\xi Q_x^-}\)
give \eqref{eq:g-inexact-error-combination}. The relative-error condition
and strong convexity of \(h_{x,\lambda}\) also give
\[
 d(x,y^\star)\le\sqrt2d(x,y),
 \qquad
 e\le\frac{\Delta}{39\lambda}d(x,y)^2.
\]
Since \(a^2\le\gamma\lambda(\xi A+a)<(8/9)\lambda\xi B\),
\eqref{eq:g-inexact-error-combination} yields
\begin{equation}
\label{eq:g-inexact-model-final}
 -\frac{\lambda B}{2}\norm v_x^2
 +\frac a\xi\varepsilon(x^\star)+A\varepsilon(y_{k-1})
 \le-\frac{4\lambda B}{9}\norm v_x^2+\frac\Delta2Q_x^-.
\end{equation}

It remains to compare the quadratic terms across tangent spaces. Define
\begin{align*}
 Q_x^+&=\norm{\widetilde u^+-\Log_xx^\star}_x^2
 +(\xi-1)\norm{\widetilde u^+}_x^2,\\
 Q_y^-&=\norm{u^--\Log_{y_{k-1}}x^\star}_{y_{k-1}}^2
 +(\xi-1)\norm{u^-}_{y_{k-1}}^2,\\
 Q_y^+&=\norm{u_k-\Log_yx^\star}_y^2
 +(\xi-1)\norm{u_k}_y^2.
\end{align*}
Put \(\tau=a/(A+a)\) and \(\rho=\xi\lambda/a\). Projection decreases
both terms in \(Q_y^-\). The bounded-curvature translation inequalities
\citep[Corollary~14 and Lemma~13]{MartinezRubioPokutta2023}, applied to
triangles of diameter at most \(2D\), give
\begin{align}
 Q_y^-&\ge Q_x^-+
 \frac{3(\xi-1)}2\frac{\tau}{1-\tau}\norm{\widetilde u}_x^2,
 \label{eq:g-old-translation}\\
 Q_x^++\frac{\xi-1}{2}\frac{\rho}{1-\rho}
 \norm{\widetilde u}_x^2&\ge Q_y^+.
 \label{eq:g-new-translation}
\end{align}
The lower inequality in \eqref{eq:g-weight-sandwich} gives
\(\tau\ge3\rho/4\). Since \(\rho\le5/6\),
\[
 \frac{\rho}{1-\rho}
 \le3\frac{\tau}{1-\tau}.
\]
Consequently, \eqref{eq:g-old-translation} and
\eqref{eq:g-new-translation} imply
\begin{equation}
\label{eq:g-quadratic-translation}
 Q_x^--Q_x^+\le Q_y^--Q_y^+.
\end{equation}

Apply \eqref{eq:g-inexact-lower-model} at \(x^\star\) and
\(y_{k-1}\) with weights \(a/\xi\) and \(A\). Using
\eqref{eq:g-inexact-model-final}, the coupling identity, and
\(\widetilde u^+=\widetilde u-(a/\xi)v\), expansion of the two squares gives
\begin{align}
 &B(f(y)-f^\star)-A(f(y_{k-1})-f^\star)
 +\frac12(Q_x^+-Q_x^-)
 \notag\\
 &\qquad\le
 \left(\frac{a^2}{2\xi}-\frac{4\lambda B}{9}\right)\norm v_x^2
 +\frac\Delta2Q_x^-.
 \label{eq:g-mirror-expansion}
\end{align}
Because \(\xi B=\xi A+a\), the upper weight inequality gives
\[
 \frac{a^2}{2\xi}-\frac{4\lambda B}{9}
 \le-\frac{8/9-\gamma}{2}\lambda B.
\]
Combining this bound with \eqref{eq:g-quadratic-translation} and
\(d(x,y)=\lambda\norm v_x\) yields
\begin{equation}
\label{eq:g-additive-potential}
 \Psi_k+\frac{8/9-\gamma}{2}
 \frac{A_kd(x_k,y_k)^2}{\lambda_k}
 \le\Psi_{k-1}+\frac{\Delta_k}{2}Q_x^-.
\end{equation}
Equations \eqref{eq:g-old-translation} and \eqref{eq:g-potential} give
\(Q_x^-\le Q_y^-\le2\Psi_{k-1}\). Divide
\eqref{eq:g-additive-potential} by \(1+\Delta_k\), and use
\((1+\Delta_k)^{-1}\ge1-\Delta_k\) and
\((1+\Delta_k)^{-1}\ge1/2\). This proves
\eqref{eq:g-potential-decrease}.

Finally, substituting \eqref{eq:g-trial-parameters} gives
\[
 \frac{a_k^2}{\lambda_k(\xi A_{k-1}+a_k)}=\gamma,
 \qquad
 \frac{\xi\lambda_k}{a_k}
 =\frac{\xi q_k}{\gamma(\xi+q_k)}\le\frac{\bar q}{\gamma}<\frac56.
\]
For \(q_k\le\bar q=(4\gamma-3)/6\),
\[
 \frac34\frac{\xi(1+q_k)}{\xi+q_k}\le\gamma.
\]
These three relations prove \eqref{eq:g-weight-sandwich}.
\end{proof}

\begin{proposition}[One epoch]
\label{prop:g-epoch}
Suppose that \(d(z,x^\star)\le r\) and the initialization in
\cref{alg:g-epoch} satisfies \(\Psi_0\le C_0\xi r^2\). After \(N\) steps
satisfying \eqref{eq:g-large-step-band},
\begin{equation}
\label{eq:g-epoch-rate}
 f(y_k)-f^\star
 \le
 \frac{C_pL_p\xi^{p+1}r^{p+1}}{N^{(3p+1)/2}}.
\end{equation}
The total number of Riemannian \(p\)-th-order queries is
\begin{equation}
\label{eq:g-epoch-query-count}
 \widetilde O_p\!\left(
 N+\xi\log_+\frac{A_{\rm tar}}{A_0}
 \right).
\end{equation}
\end{proposition}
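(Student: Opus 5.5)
Telescoping \cref{lem:g-one-step} from the initialization gives a value bound with \(A_k\) in the denominator, and the heart of the argument is a growth bound on \(A_k\). The proof mirrors the \(h\)-convex argument of \cref{prop:h-pth-epoch}, with the factor \(\xi\) tracked throughout.

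\emph{Step 1 (potential and movement).} Multiply \eqref{eq:g-potential-decrease} over \(k\). Since \(\prod_k(1-\Delta_k)\ge c>0\) for \(\Delta_k=(k+1)^{-2}\), we obtain \(\Psi_k\le C\Psi_0\le C\xi r^2\). Hence \(A_k(f(y_k)-f^\star)\le C\xi r^2\), and summing the dissipation term gives the movement budget
\[
\sum_k \frac{A_kd(x_k,y_k)^2}{\lambda_k}\le C_p\xi r^2 .
\]

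\emph{Step 2 (growth of \(A_k\)).} Only large steps, those satisfying \eqref{eq:g-large-step-band}, are counted. At such a step \(d(x_k,y_k)\asymp(\lambda_kL_p)^{-1/(p-1)}\), so each summand equals \(A_k\lambda_k^{-(p+1)/(p-1)}L_p^{-2/(p-1)}\) up to constants. From \eqref{eq:g-trial-parameters}, \(\lambda_k\asymp a_k^2/(\xi A_k)\) and \(A_k-A_{k-1}=a_k/\xi\), so \(\lambda_k\asymp\xi(A_k-A_{k-1})^2/A_k\). Substituting these expressions turns the movement budget into a scalar constraint on the increments of \(A\). Apply the standard reverse-H\"older argument of \citet{BubeckJiangLeeLiSidford2019} (equivalently the Monteiro--Svaiter growth lemma), writing \(\sqrt{A_k}-\sqrt{A_{k-1}}\asymp(A_k-A_{k-1})/\sqrt{A_k}\) and using H\"older over the \(N\) large steps. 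This yields
\[
A_N\ge c_p\,\frac{N^{(3p+1)/2}}{L_p\,\xi^{p}\,r^{p-1}},
\]
with the power of \(\xi\) obtained by bookkeeping the \(\xi\)'s in \(\lambda_k\) and in the budget \(C\xi r^2\). Capped steps only increase \(A\) and can be discarded from the sum. Combining this growth bound with Step 1 gives
\[
f(y_N)-f^\star\le C\xi r^2/A_N\le C_pL_p\xi^{p+1}r^{p+1}/N^{(3p+1)/2},
\]
which is \eqref{eq:g-epoch-rate}.

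\emph{Step 3 (query count).} Each step costs one parameter search. By \cref{lem:g-parameter-search}, a search uses \(O_p(\log_+(A_{k-1}L_pD^{p-1}/\xi)+\log\log(k+2)+1)\) queries, which is polylogarithmic for \(A_{k-1}<A_{\rm tar}\). The same lemma bounds the number of capped steps by \(O(\xi\log_+(A_{\rm tar}/A_0))\), and there are \(N\) large steps. The initialization costs \(O_p(\log\log(1/\Delta_0))\) queries. Multiplying the step count by the logarithmic per-step cost gives \eqref{eq:g-epoch-query-count}.

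\emph{Main obstacle.} The hard step is the growth inequality in Step 2. The index sequence mixes large and capped steps, and the \(\xi\)-dependent relation between \(a_k\), \(\lambda_k\) and \(A_k\) must be handled without losing the \(N^{(3p+1)/2}\) exponent. I would first try the \(\sqrt{A}\) telescoping restricted to large steps, treating capped increments as free gains. If that fails, I would fall back on the log-deficit charging argument used in \cref{prop:h-pth-epoch}, relative to a cap schedule.
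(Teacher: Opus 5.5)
Your proposal is correct. Steps 1 and 3 match the paper: it uses $\prod_{k=1}^K(1-\Delta_k)=(K+2)/(2(K+1))>1/2$ to bound $\Psi_k$ and the summed dissipation by $C\xi r^2$, and it counts queries through \cref{lem:g-parameter-search} as you do. Step 2 takes a different route.

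Telescoping $\sqrt A$ gives $\sqrt{A_N}\gtrsim\xi^{-1/2}\sum_j\sqrt{\lambda_j}$, but it leaves the weight $A_j$ inside the budget $\sum_jA_j\lambda_j^{-(p+1)/(p-1)}\lesssim L_p^{2/(p-1)}\xi r^2$. One H\"older step over all $N$ large steps cannot absorb that weight: replacing $A_j$ by $A_0$ only yields $A_N\gtrsim N^{(3p+1)/(p+1)}$. So you need the Monteiro--Svaiter growth lemma in its bootstrapped form, which does close the argument:
\begin{itemize}
\item Set $S_j=\sum_{i\le j}\sqrt{\lambda_{k_i}}$ over the large steps, and use $A_{k_j}\ge c\,\xi^{-1}S_j^2$; capped steps only help.
\item Restrict the budget to the last $\lceil N/2\rceil$ large steps to get $S_N\ge c\,N^{(3p+1)/(2(p+1))}M^{-(p-1)/(2(p+1))}S_{\lfloor N/2\rfloor}^{(p-1)/(p+1)}$, with $M\asymp L_p^{2/(p-1)}\xi^2r^2$.
\item Iterate from the base case given by the one-term budget. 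This yields $A_N\ge c_pN^{(3p+1)/2}/(L_p\xi^pr^{p-1})$.
\end{itemize}

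The paper avoids this induction by telescoping $B^{1/(p+1)}$ instead of $B^{1/2}$. The power $1/(p+1)$ is exactly the one for which the concavity increment $\delta_j/B_j^{p/(p+1)}$ equals the $-(p-1)/(2(p+1))$ power of the summand $B_j^{2p/(p-1)}\delta_j^{-2(p+1)/(p-1)}$. The weight therefore cancels, and a single Jensen step gives the growth bound directly, with no base case. Your route imports a classical lemma, while the paper's is a self-contained one-shot argument; both give the same $\xi^{p}$ dependence.
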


\begin{proof}
For \(K\ge1\),
\(\prod_{k=1}^K(1-\Delta_k)=(K+2)/(2(K+1))>1/2\). Multiplying
\eqref{eq:g-potential-decrease} by the preceding partial product and
summing, then retaining only the large steps, gives
\begin{equation}
\label{eq:g-energy-sum}
 \sum_{j=1}^N
 \frac{B_j\widehat r_j^2}{\lambda_j}
 \le C\xi r^2,
\end{equation}
where \(B_j\) is the current value of \(A\) and
\(\widehat r_j=d(x_j,y_j)\). Let
\(\delta_j=a_j/\xi\). The weight sandwich and the large-step band imply
\[
 \delta_j^2\asymp\frac{\lambda_jB_j}{\xi},
 \qquad
 \lambda_jL_p\widehat r_j^{p-1}\asymp_p1.
\]
Hence
\[
 \frac{B_j\widehat r_j^2}{\lambda_j}
 \asymp_p
 L_p^{-2/(p-1)}\xi^{-(p+1)/(p-1)}
 \frac{B_j^{2p/(p-1)}}{\delta_j^{2(p+1)/(p-1)}}.
\]
Substitution into \eqref{eq:g-energy-sum} gives
\begin{equation}
\label{eq:g-eliminated-energy}
 \sum_{j=1}^N
 \frac{B_j^{2p/(p-1)}}{\delta_j^{2(p+1)/(p-1)}}
 \le C_pL_p^{2/(p-1)}\xi^{2p/(p-1)}r^2.
\end{equation}
Since intervening capped steps only increase \(A\),
\(B_j-B_{j-1}\ge\delta_j\). Concavity of \(t^{1/(p+1)}\) gives
\[
 B_j^{1/(p+1)}-B_{j-1}^{1/(p+1)}
 \ge\frac{\delta_j}{(p+1)B_j^{p/(p+1)}}.
\]
The right-hand side is \(1/(p+1)\) times the
\(-(p-1)/(2(p+1))\)-power of the \(j\)-th summand in
\eqref{eq:g-eliminated-energy}. Jensen's inequality therefore yields
\[
 B_N^{1/(p+1)}
 \ge
 c_pN^{(3p+1)/(2(p+1))}
 \left(L_p^{2/(p-1)}\xi^{2p/(p-1)}r^2
 \right)^{-(p-1)/(2(p+1))}.
\]
Raising this inequality to the power \(p+1\) gives
\begin{equation}
\label{eq:g-weight-growth}
 B_N\ge
 \frac{c_pN^{(3p+1)/2}}{L_p\xi^pr^{p-1}}.
\end{equation}
Combining \eqref{eq:g-weight-growth} with
\(B_N(f(y_k)-f^\star)\le C\xi r^2\) proves
\eqref{eq:g-epoch-rate}. Capped steps only increase \(A\). Their number is
controlled by \cref{lem:g-parameter-search}, and the search and refinement
overheads are logarithmic. This proves \eqref{eq:g-epoch-query-count}.
\end{proof}

\subsection{Restart and proof of the upper bound}
\label{sec:g-restart}

\begin{proof}[Proof of \cref{thm:g-upper}]
If \(L_p=0\), replace it by the valid positive budget
\(\widehat L_p=\mu R^{1-p}\). The resulting unit condition parameter is
absorbed by the additive \(\xi\) term in the theorem. We may therefore
assume \(L_p>0\).

Consider an epoch centered at \(z\in X\) with
\(d(z,x^\star)\le r\). Let
\(\lambda_0=c_p/(L_p(4r)^{p-1})\) and \(A_0=200\xi\lambda_0\).
The minimizer \(y_0^\star\) of \(h_{z,\lambda_0}\) is used only to analyze
the initialization. Nonexpansiveness and \(d(z,x^\star)\le r\) give
\(d(y_0^\star,x^\star)\le r\) and \(d(z,y_0^\star)\le2r\).
The first point in \cref{lem:g-selector} has displacement at most \(2r\).
Hence its statistic is at most \(c_p/2^{p-1}\), which lies in the
contraction region when \(c_p\) is sufficiently small. Refine to a fixed
relative accuracy, call the result \(y_0\), and set \(u_0=0\). Strong
convexity of \(h_{z,\lambda_0}\) gives
\(d(y_0,y_0^\star)=O(r)\), so \(d(y_0,x^\star)=O(r)\). Moreover,
\begin{align*}
 f(y_0)-f^\star
 &\le h_{z,\lambda_0}(y_0^\star)-f^\star
 +\frac{\Delta_0d(z,y_0^\star)^2}{78\lambda_0}\\
 &\le\frac{d(z,x^\star)^2}{2\lambda_0}
 +\frac{\Delta_0d(z,y_0^\star)^2}{78\lambda_0}
 =O(r^2/\lambda_0).
\end{align*}
Since \(A_0/\lambda_0=200\xi\), the objective and metric terms in
\eqref{eq:g-potential} satisfy \(\Psi_0=O(\xi r^2)\).

Set \(r_i=R2^{-i}\). In epoch \(i\), choose
\[
 N_i=\left\lceil
 C_p\xi^{2(p+1)/(3p+1)}Q_p(r_i)^{2/(3p+1)}
 \right\rceil
\]
and \(A_{\rm tar}=8C\xi/\mu\), where
\(\Psi_k\le C\xi r_i^2\) follows from
\eqref{eq:g-potential-decrease}. The choice of \(N_i\) and
\eqref{eq:g-epoch-rate} give the same conclusion if the large-step budget
is reached. If \(A_k\ge A_{\rm tar}\), then
\(A_k(f(y_k)-f^\star)\le\Psi_k\) gives it directly. Thus either stopping
condition in \cref{alg:g-epoch} yields
\(f(y)-f^\star\le\mu r_i^2/8\).
Equation \eqref{eq:gap-distance} then gives
\(d(y,x^\star)\le r_i/2=r_{i+1}\), so the next epoch is valid on the same
set \(X\). The large-step budgets form a geometric series:
\begin{equation}
\label{eq:g-restart-large-sum}
 \sum_iN_i
 =\widetilde O_p\!\left(
 \xi^{2(p+1)/(3p+1)}\Qp^{2/(3p+1)}
 \right).
\end{equation}
Indeed, \(Q_p(r_i)=\Qp2^{-i(p-1)}\), so the nonconstant terms form a
geometric series. The ceiling contributes only \(O_p(\log_+\Qp)\).
Furthermore, \(A_{\rm tar}/A_0=O_p(Q_p(r_i))\). By
\cref{lem:g-parameter-search}, the capped steps contribute
\begin{equation}
\label{eq:g-restart-capped-sum}
 O_p\!\left(\xi\log^2(\Qp+2)\right),
\end{equation}
which is \(\widetilde O_p(\xi)\).

When \(Q_p(r_i)\) reaches a sufficiently small \(p\)-dependent constant, use
only the fresh initialization at each subsequent radius. The initialization
bound above and strong convexity of \(f\) give
\begin{equation}
\label{eq:g-local-tail}
 f(z_{t+1})-f^\star\le K_pL_pr_t^{p+1},
 \qquad
 r_{t+1}\le K_pr_t\sqrt{Q_p(r_t)},
 \qquad
 Q_p(r_{t+1})\le C_pQ_p(r_t)^{(p+1)/2}.
\end{equation}
Here the middle inequality follows from
\(d(z_{t+1},x^\star)^2\le2(f(z_{t+1})-f^\star)/\mu\), and the last follows
by substituting it into \(Q_p(r_{t+1})=L_pr_{t+1}^{p-1}/\mu\).
Choose the local threshold so that
\(C_p^{2/(p-1)}Q_p(r_0)\le1/2\). Induction in
\eqref{eq:g-local-tail} gives
\[
 C_p^{2/(p-1)}Q_p(r_t)
 \le2^{-((p+1)/2)^t}.
\]
Since \(f(z_{t+1})-f^\star\le K_p\mu R^2Q_p(r_t)\), the local tail reaches
accuracy \(\eps\) after
\[
 O_p\!\left(
 \log\log\!\left(e^e+\frac{\mu R^2}{\eps}\right)
 \right)
\]
additional queries.

All points selected by \eqref{eq:g-selector} lie in \(X\). The projected
dual vector has norm at most \(D\), so every coupling point lies within
distance \(D\) of \(X\) and hence in \(U\). Combining
\eqref{eq:g-restart-large-sum}, \eqref{eq:g-restart-capped-sum}, and
\eqref{eq:g-local-tail} proves the theorem.
\end{proof}

\section{Bounded-curvature geodesically convex lower bound}
\label{sec:g-lower-proof}

This section proves \cref{thm:g-bounded-curvature-lower} by lifting a
Euclidean hard instance to a nonflat product manifold.

\begin{proof}[Proof of \cref{thm:g-bounded-curvature-lower}]
Let \(f_{\mathrm E}:\R^m\to\R\) be the Euclidean hard instance of
\citet{KornowskiShamir2020}, with strong-convexity parameter \(\mu\),
initial radius \(R\), and \(p\)-th-order smoothness constant
\(L_p^{\mathrm E}\). For \(\Xi>1\), choose \(c_\Xi>0\) such that
\(4[8c_\Xi\coth(8c_\Xi)]-3\le\Xi\).
This is possible because \(s\coth s\to1\) as \(s\downarrow0\). Set
\(\kappa R=c_\Xi\), let
\(\Man=\R^m\times\Hyp^2_\kappa\), fix \(y_0\in\Hyp^2_\kappa\), and define
\begin{equation}
\label{eq:g-lower-product-objective}
 F(u,y)\defeq f_{\mathrm E}(u)
 +\frac{\mu}{2}d_{\Hyp^2_\kappa}(y,y_0)^2.
\end{equation}
Writing \(u_0\) and \(u^\star\) for the Euclidean initial point and
minimizer, the product initial point is \((u_0,y_0)\), and its minimizer is
\((u^\star,y_0)\), still at distance at most \(R\).

The product connection splits. Hence
\(-\kappa^2\le\Sec_{\Man}\le0\), and \(\Man\) is nonflat. If
\(\rho(y)=d(y,y_0)\), the hyperbolic radial formula
\citep[Section~1.4.4]{Petersen2016} gives
\begin{equation}
\label{eq:g-lower-radial-hessian}
 \operatorname{Hess}\frac{\rho^2}{2}
 =d\rho\otimes d\rho
 +\kappa\rho\coth(\kappa\rho)
   \bigl(g_{\Hyp}-d\rho\otimes d\rho\bigr)
 \succeq g_{\Hyp}.
\end{equation}
The formula extends continuously at \(y_0\). Together with the strong
convexity of \(f_{\mathrm E}\), it shows that \(F\) is \(\mu\)-strongly
\(g\)-convex. For unit curvature,
\(\rho^2/2\) is smooth and its \((p+1)\)-st covariant derivative is bounded
on the ball of radius \(10c_\Xi\). Rescaling the metric to curvature
\(-\kappa^2\) gives
\begin{equation}
\label{eq:g-lower-hyperbolic-smoothness}
 \left\|\nabla^{p+1}\frac{\mu\rho^2}{2}\right\|_{\mathrm{op}}
 \le C_{p,c_\Xi}\mu\kappa^{p-1}
 \le C_{p,c_\Xi}\frac{\mu}{R^{p-1}}
 \qquad\text{on }\overline B(y_0,10R).
\end{equation}
Integrating this tensor bound along geodesics gives the corresponding
Lipschitz bound for the \(p\)-th covariant derivative.
Since mixed covariant derivatives vanish and product parallel transport
preserves the factor splitting, a valid smoothness constant for \(F\) on
the working ball is
\begin{equation}
\label{eq:g-lower-condition-number}
 \overline L_p
 \defeq L_p^{\mathrm E}+C_{p,c_\Xi}\frac{\mu}{R^{p-1}},
 \qquad
 \overline Q_p
 \defeq\frac{\overline L_pR^{p-1}}{\mu}
 =Q_p^{\mathrm E}+C_{p,c_\Xi}.
\end{equation}

The feasible ball has diameter \(D\le4R\). Since \(s\coth s\) is
increasing, the choice of \(c_\Xi\) and \(2\kappa D\le8c_\Xi\) give
\(\xi_D=4\zeta_\kappa(2D)-3\le\Xi\).
Thus \(F\) belongs to the bounded-curvature class of
\cref{thm:g-bounded-curvature-lower}.

It remains to transfer the oracle lower bound. At a product query \((u,y)\),
one Euclidean \(p\)-th-order query at \(u\), together with the known
derivatives of the second term in \eqref{eq:g-lower-product-objective},
reproduces the complete Riemannian \(p\)-th-order reply. All mixed tensor
blocks are zero. This simulates every adaptive product query with one
Euclidean query. Moreover,
\(F(\widehat u,\widehat y)-F^\star
\ge f_{\mathrm E}(\widehat u)-f_{\mathrm E}^\star\).
Under \eqref{eq:euclidean-transfer-distance}, enlarge the fixed threshold so that
\(Q_p^{\mathrm E}\ge C_{p,c_\Xi}\). Equation
\eqref{eq:g-lower-condition-number} then gives
\(Q_p^{\mathrm E}\ge\overline Q_p/2\). The Euclidean lower bound yields
\[
 T\ge c_p(Q_p^{\mathrm E})^{2/(3p+1)}
 \ge c_p2^{-2/(3p+1)}\overline Q_p^{2/(3p+1)}
 =\Omega_{p,\Xi}\!\left(\overline Q_p^{2/(3p+1)}\right).
\]
For \(\Xi=1\), the Euclidean hard instance gives the stated endpoint.
\end{proof}

\section{Growing-curvature geodesically convex lower bound}
\label{sec:g-growing-lower-proof}

This section proves \cref{thm:g-growing-curvature-lower}. The proof adapts
the exact value-and-gradient resisting oracle of
\citet[Theorem~20]{CriscitielloBoumal2022}. The new ingredient is a local interpolation
argument that matches all Riemannian derivatives through order \(p\) while
preserving strong convexity and \(p\)-th-order smoothness.

\begin{lemma}[Local interpolation of higher-order derivative data]
\label{lem:g-local-interpolation}
Fix \(p\ge2\). On \(\Hyp^2\), there are constants
\(\rho_0,c_p,C_p>0\) with the following property. Let
\(q\in\Hyp^2\), \(0<\rho\le\rho_0\), and let
\(J_m\in\operatorname{Sym}^m(T_q^\ast\Hyp^2)\), \(0\le m\le p\), denote
symmetric derivative data in geodesic normal coordinates at \(q\). If
\begin{equation}
\label{eq:g-interpolation-hypothesis}
 \norm{J_m}_{\mathrm{op}}
 \le c_p\min\{\eta\rho^{2-m},
              \Lambda\rho^{p+1-m}\},
 \qquad 0\le m\le p,
\end{equation}
then there is \(h\in C^\infty(\Hyp^2)\), supported in \(B(q,\rho)\), whose
normal-coordinate derivatives at \(q\) are \(J_0,\ldots,J_p\), with
\(\norm{\nabla^2h}_{\mathrm{op}}\le\eta\) and
\(\norm{\nabla^{p+1}h}_{\mathrm{op}}\le\Lambda\).
Consequently \(h\) realizes the corresponding complete ordered covariant
derivative data through order \(p\).
\end{lemma}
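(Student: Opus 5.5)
The plan is to build \(h\) in geodesic normal coordinates \(X\) at \(q\) as a cutoff Taylor polynomial. Set
\(P(X)\defeq\sum_{m=0}^pJ_m[X^{\otimes m}]/m!\), fix once and for all a radial bump \(\psi\in C_c^\infty(B(0,1))\) with \(\psi\equiv1\) on \(B(0,1/2)\), and put
\(h(\Exp_qX)\defeq\psi(X/\rho)P(X)\), extended by zero outside \(B(q,\rho)\). The cutoff equals one near the origin, so the coordinate derivatives of \(h\) at \(q\) are exactly \(J_0,\ldots,J_p\), and \(h\) is supported in \(B(q,\rho)\). Since \(\rho\le\rho_0\) with \(\rho_0\) fixed (say \(\rho_0=1\)), the hyperbolic metric, its Christoffel symbols, and their coordinate derivatives through order \(p\) are bounded by absolute constants on \(B(0,\rho_0)\). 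The last sentence of the statement then follows: ordered covariant derivatives at \(q\) are universal polynomial expressions in the coordinate derivatives and the connection coefficients, so matching the coordinate jets at \(q\) forces the covariant data to match as well.

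The main step is the bound on the derivatives of \(h\). By Leibniz, for \(|X|\le\rho\) and \(0\le k\le p+1\),
\[
 \norm{D^k(\psi(\cdot/\rho)P)(X)}
 \le C_p\sum_{i+j=k}\rho^{-i}\norm{D^jP(X)}
 \le C_p\sum_{i+j=k}\rho^{-i}\sum_{m\ge j}\norm{J_m}\rho^{m-j}.
\]
Substituting the hypothesis \eqref{eq:g-interpolation-hypothesis}, each term \(\rho^{-i}\norm{J_m}\rho^{m-j}\) with \(i+j=k\) is at most \(c_pC\min\{\eta\rho^{2-k},\Lambda\rho^{p+1-k}\}\). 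Hence
\[
 \norm{D^kh}\le C_p'c_p\min\{\eta\rho^{2-k},\Lambda\rho^{p+1-k}\},\qquad 0\le k\le p+1.
\]
Note that \(D^{p+1}P=0\), so the top-order derivative always carries at least one cutoff derivative. This case is harmless.

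Next, convert coordinate bounds to covariant bounds. The quantity \(\nabla^kh\) equals \(D^kh\) plus terms of the form (derivatives of the Christoffel symbols) times \(D^jh\) with \(j<k\). These coefficients are bounded on \(B(0,\rho_0)\), and the coordinate frame is uniformly comparable to an orthonormal one there. For \(k=2\) this gives
\[
 \norm{\nabla^2h}\le C(\norm{D^2h}+\norm{Dh})\le Cc_p\eta(1+\rho)\le\eta .
\]
For \(k=p+1\) the lower-order terms contribute \(\Lambda\rho^{p+1-j}\le\Lambda\rho_0^{p+1-j}\), which is \(O(\Lambda)\); hence \(\norm{\nabla^{p+1}h}\le\Lambda\) once \(c_p\) is small. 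Using \(\rho\le\rho_0\le1\) is what lets the mixed-order terms be absorbed.

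The main obstacle is this bookkeeping. One must check that the same \(\min\{\cdot,\cdot\}\) scaling survives both the Leibniz expansion and the Christoffel corrections, so that lower-order terms, which carry \emph{larger} powers of \(\rho\), are dominated. I expect to handle it exactly as above: at each order, bound \(\min\) by whichever branch is needed, and use \(\rho\le1\). Finally, \(c_p\) is chosen small relative to \(C_p'\) and the metric constants, which depend only on \(p\) and \(\rho_0\).
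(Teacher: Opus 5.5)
Your proposal is correct and follows the paper's proof essentially step for step. Both arguments use the same ingredients:
\begin{itemize}
\item the cutoff Taylor polynomial \(\chi(z/\rho)P_J(z)\) in normal coordinates;
\item a Leibniz estimate of the form \(\|D^\ell h\|\le C_p\max_m(\rho^m\|J_m\|)\rho^{-\ell}\);
\item uniform comparability of coordinate and covariant derivatives on a fixed-radius normal chart;
\item the triangular coordinate-to-covariant formulas for the final claim.
\end{itemize}
Your explicit check that the lower-order Christoffel terms are absorbed using \(\rho\le\rho_0\le1\) spells out a step the paper leaves implicit.
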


\begin{proof}
Choose a smooth radial cutoff \(\chi\) that equals one on \(B(0,1/3)\) and
vanishes outside \(B(0,2/3)\). Let \(P_J\) be the degree-\(p\) polynomial
with derivatives \(J_0,\ldots,J_p\) at the origin, and set
\[
 h(\Exp_q z)=\chi(z/\rho)P_J(z)
 \quad(\norm z<\rho),
 \qquad h=0\quad(\norm z\ge\rho).
\]
The cutoff is constant near zero and vanishes near the chart boundary.
Thus \(h\) is smooth, has the prescribed derivatives, and is supported in
\(B(q,\rho)\). Leibniz' rule gives, for \(0\le\ell\le p+1\),
\begin{equation}
\label{eq:g-interpolation-derivative-bound}
 \norm{\nabla^\ell h}_{\mathrm{op}}
 \le C_p
 \max_{0\le m\le p}
   \bigl(\rho^m\norm{J_m}_{\mathrm{op}}\bigr)\rho^{-\ell}.
\end{equation}
Here the coordinate and covariant bounds are equivalent uniformly in \(q\)
because the metric and Christoffel derivatives are uniformly bounded on a
fixed-radius normal chart. The triangular coordinate-to-covariant formulas
also show that the prescribed symmetric data determine every ordered
covariant derivative. Applying \eqref{eq:g-interpolation-hypothesis} in
\eqref{eq:g-interpolation-derivative-bound} with \(\ell=2\) and
\(\ell=p+1\), and decreasing \(c_p\), proves the result.
\end{proof}

\begin{lemma}[Unit-curvature resisting oracle]
\label{lem:g-unit-resisting-oracle}
Fix \(p\ge2\) and a public initial point \(x_0\in\Hyp^2\). There are
constants \(c_p,C_p,r_p>0\) such that, for every
\(r\ge r_p\), every deterministic method restricted to exact Riemannian
\(p\)-th-order queries in \(\overline B(x_0,10r)\subset\Hyp^2\) has an instance with the
following properties. The objective is \(1/2\)-strongly \(g\)-convex, its
minimizer \(z\) lies in \(\overline B(x_0,3r/4)\), and
\(\operatorname{Lip}_{\overline B(x_0,10r)}(\nabla^pf)\le C_pr\).
After at most \(c_pr/\log(2+r)\) queries, the method has an instance for
which its output \(\widehat x\) satisfies \(d(\widehat x,z)\ge r/4\).
\end{lemma}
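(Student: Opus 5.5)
The plan is to adapt the resisting-oracle construction of \citet[Theorem~20]{CriscitielloBoumal2022}, with \cref{lem:g-local-interpolation} supplying higher-order matching. The base objective is the squared distance \(f_0(x)=\tfrac12 d(x,z)^2\) to a hidden center \(z\). By the radial Hessian formula \eqref{eq:g-lower-radial-hessian} this is \(1\)-strongly \(g\)-convex. Its derivatives of order \(3,\dots,p+1\) are bounded on \(\overline B(x_0,10r)\) by \(C_p\) times powers of \(\coth\), after the exponential factors are accounted for. More precisely, \(\rho\coth\rho\) grows linearly, so the \((p+1)\)-st covariant derivative is \(O_p(r)\) there; this is the source of the \(C_pr\) Lipschitz budget. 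The adversary keeps a set \(Z_k\subseteq\overline B(x_0,3r/4)\) of candidate minimizers consistent with all replies so far. The key geometric fact, from hyperbolic divergence, is that the replies of \(f_z\) for \(z\) on a circle of radius \(\asymp r\) around \(x_0\) differ substantially only for \(z\) near the geodesic direction of the query. Hence a single query distinguishes only an angular sector of width \(e^{-\Omega(r)}\)-ish, or, after the \(\log\) bookkeeping, a fraction \(O(\log(2+r)/r)\) of the candidates.

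Concretely, following Criscitiello--Boumal, I will take the candidate set to be \(z\) on a boundary arc with \(d(x_0,z)\in[r/2,3r/4]\), parametrized by angle. For each query \(x_k\) the adversary answers with the data of a common function \(f_{Z}\) that agrees with every remaining candidate. The first step is to build the perturbed instances. For a query \(q\), let \(J^{z}\) be the jet of \(\tfrac12d(\cdot,z)^2\) at \(q\) through order \(p\). For two candidates \(z,z'\) that both avoid a cone around \(\Log_q z\), the jet difference \(J^{z}-J^{z'}\) is exponentially small in the distance from \(q\) to the geodesic through them; this follows from Jacobi field decay in \(\Hyp^2\). I will then add, via \cref{lem:g-local-interpolation}, a bump \(h_k\) supported in \(B(x_k,\rho)\) that corrects the jet of \(f_{z'}\) at \(x_k\) to the announced reply. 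The hypothesis \eqref{eq:g-interpolation-hypothesis} will be checked with \(\eta=1/(2T)\) and \(\Lambda=r\). Summing at most \(T\) such bumps, with disjoint supports or Hessian perturbations of total size at most \(1/2\), keeps \(1/2\)-strong convexity and the \(C_pr\) smoothness budget.

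The bookkeeping proceeds as follows. Each query removes from \(Z_k\) only the candidates whose jet at \(x_k\) differs by more than \(c_p\min\{\eta\rho^{2-m},\Lambda\rho^{p+1-m}\}\) from the announced one. Since the jets depend on \(z\) through quantities decaying like \(e^{-d}\) off the ray, this excluded set is an angular window of width \(O(\log(2+r)/r)\) times the total. After \(c_pr/\log(2+r)\) queries, two candidates at mutual distance \(\ge r/2\) remain consistent. The output \(\widehat x\) is then at distance \(\ge r/4\) from one of them, and the adversary fixes that one, completing the instance as \(f_z+\sum_k h_k\). I also need to check that the bumps do not move the minimizer. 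Each bump is placed away from the candidate \(z\), or its gradient at \(z\) vanishes, so the minimizer stays \(z\).

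The main obstacle is the quantitative jet-separation estimate. I must show that, for all orders \(m\le p\), the difference of the normal-coordinate jets of \(\tfrac12d(\cdot,z)^2\) and \(\tfrac12d(\cdot,z')^2\) at \(q\) is bounded by \(C_p e^{-c\,\delta}\) times polynomial factors in \(r\). Here \(\delta\) measures how far \(q\) is from lying "between" \(z\) and \(z'\). This bound must beat the interpolation thresholds \(\eta\rho^{2-m}\) with \(\eta\sim1/T\). I would first try to do this explicitly in the upper-half-plane model using \(\cosh d\) formulas. The higher derivatives of \(\operatorname{arcosh}\) of a ratio of polynomials can be bounded by Faà di Bruno, and the exponential smallness of the angle differences yields the logarithmic loss that appears in the query count.
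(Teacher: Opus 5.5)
Your proposal has a genuine gap at its core. The jet-separation estimate you single out as the main obstacle is false, and the bookkeeping built on it runs in the wrong direction.

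The reply of $\tfrac12 d(\cdot,z)^2$ at $q$ contains $\tfrac12 d(q,z)^2$ and $-\Log_q z$, and these determine $z$ exactly. Hyperbolic divergence makes only the \emph{angle} at $q$ between distant candidates small. It does not make the difference $d(q,z)-d(q,z')$ small. For example, take two $r/2$-separated candidates and let $q$ recede along the geodesic perpendicular at $z$ to the geodesic through $z,z'$. Its distance to that geodesic grows linearly, yet hyperbolic Pythagoras gives $d(q,z')-d(q,z)\to\log\cosh d(z,z')\ge r/2-\log 2$.

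More simply, at $q=x_0$ two candidates on your arc have gradients differing by about $r\,\Delta\theta$. The first-order tolerance from \cref{lem:g-local-interpolation} is at most $c_p\eta\rho\lesssim 1/T$. So one query \emph{retains} only a window of relative size $O(1/(rT))$; it does not merely \emph{exclude} a fraction $O(\log(2+r)/r)$. Relatedly, you never say how the announced reply is chosen, and that choice is where the argument actually lives.

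The missing idea is a counting argument in the finite-dimensional jet space $E_p$, run against an exponentially large candidate set. The paper proceeds as follows:
\begin{itemize}
\item It takes $N\ge e^{cr}$ pairwise $r/2$-separated candidates in $\overline B(x_0,3r/4)$. This is where negative curvature is used.
\item At each query it discards the at most one candidate within $r/4$ of the query.
\item It notes that all active jets at $x_k$ lie in a common body at most $C_pr^2T_0$ times larger, block by block, than the interpolation box $\prod_m B_m(0,q_{m,k})$. This follows from $|f|\le Cr^2$ and $\norm{\nabla^m f}_{\mathrm{op}}\le C_pr$.
\item It picks the reply by averaging, so that a $1/D_r$ fraction survives each query, with $D_r=\mathrm{poly}(r)$.
\end{itemize}
The query count $r/\log r$ is then exactly $\log N/\log D_r$.

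Two further ingredients are also missing from your plan. First, a query close to an earlier one forces a bump radius $\rho_k=\delta_k/4$ and hence tiny tolerances. The paper handles this by Taylor-expanding from the earlier query, where the active candidates already agree through order $p$; this gives jet differences $O(r\delta_k^{p+1-m})$. Second, bumps may then overlap, so the per-round budget must be $\Lambda=r/T_0$. Your choice of $\Lambda=r$ relies on disjoint supports, which cannot be assumed.
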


\begin{proof}
Choose points \(z_1,\ldots,z_N\in\overline B(x_0,3r/4)\) such that
\begin{equation}
\label{eq:g-hyperbolic-packing}
 d(z_i,z_j)\ge r/2\quad(i\ne j),
 \qquad N\ge\exp(c r).
\end{equation}
Such a packing follows from the hyperbolic law of cosines by taking an
\(e^{-r/2}\)-separated set of directions at \(x_0\)
\citep[Lemma~12]{CriscitielloBoumal2022}. Associate with \(z_j\)
the initial objective \(f_{j,0}(x)=d(x,z_j)^2/2\).

Fix a query budget \(1\le T_0\le c_pr/\log(2+r)\), and set
\(\eta\defeq1/(4T_0)\) and \(\Lambda\defeq r/T_0\).
We construct decreasing active sets \(A_k\subset\{1,\ldots,N\}\) and
candidate objectives
\begin{equation}
\label{eq:g-active-candidates}
 f_{j,k}(x)=\frac12d(x,z_j)^2+H_{j,k}(x),
 \qquad j\in A_k,
\end{equation}
that give identical Riemannian \(p\)-th-order replies at the first \(k\)
query points and satisfy
\begin{equation}
\label{eq:g-perturbation-budgets}
 \norm{\nabla^2H_{j,k}}_{\mathrm{op}}\le k\eta,
 \qquad
 \norm{\nabla^{p+1}H_{j,k}}_{\mathrm{op}}\le k\Lambda.
\end{equation}
Every perturbation will vanish near the active minimizers, so \(z_j\)
remains the minimizer of \(f_{j,k}\).

Suppose the method queries \(x_k\). Discard any \(j\) for which
\(d(x_k,z_j)<r/4\). By \eqref{eq:g-hyperbolic-packing}, at most one index is
discarded. Let \(\delta_k\) be the distance from \(x_k\) to the closest
earlier query, with \(\delta_0=\infty\). Decrease the interpolation radius
so that \(\rho_0\le1/16\), and define
\begin{equation}
\label{eq:g-bump-radius}
 \rho_k\defeq
 \begin{cases}
  \rho_0,&k=0\text{ or }\delta_k\ge4\rho_0,\\
  \delta_k/4,&0<\delta_k<4\rho_0.
 \end{cases}
\end{equation}
A repeated query receives its previous reply and leaves the active set
unchanged. Otherwise a perturbation supported in \(B(x_k,\rho_k)\) vanishes
near every earlier query and, for this choice of \(\rho_0\), near every
active \(z_j\).

Represent the response at \(x_k\) by its symmetric normal-coordinate
derivatives. These form the Euclidean space
\(E_p\defeq\bigoplus_{m=0}^p\operatorname{Sym}^m(\R^2)\), whose dimension is
\(M_p\defeq\binom{p+2}{p}\).
Let \(Y_{j,k}\in E_p\) be the derivative data of \(f_{j,k}\) at \(x_k\).
Let \(B_m(0,a)\) denote the operator-norm ball of radius \(a\) in
\(\operatorname{Sym}^m(\R^2)\). Around each \(Y_{j,k}\), place the product
body
\begin{equation}
\label{eq:g-derivative-body}
 {\cal B}_{j,k}
 \defeq Y_{j,k}+\prod_{m=0}^pB_m(0,q_{m,k}),
 \qquad
 q_{m,k}\defeq c_p\min\{
  \eta\rho_k^{2-m},\Lambda\rho_k^{p+1-m}\}.
\end{equation}
By \cref{lem:g-local-interpolation}, every point of
\({\cal B}_{j,k}\) is an admissible common reply after adding a perturbation
that respects one round of the budgets in
\eqref{eq:g-perturbation-budgets}.

We now place all the translated bodies in one product body. If
\(\delta_k\ge4\rho_0\), the query lies outside every previous perturbation.
For \(d(x,z_j)\ge1\), all derivatives of
\(\coth d(x,z_j)\) are bounded. Differentiating the radial unit covector
and its orthogonal projector in \eqref{eq:g-lower-radial-hessian} therefore
produces bounded tensors. The differential of \(d(x,z_j)^2/2\) is
\(d(x,z_j)\) times the radial unit covector. Hence each further derivative
contains at most one undifferentiated distance factor. On \(d(x,z_j)\le1\),
smoothness in normal coordinates gives a uniform bound. The query and
minimizer radii now give
\[
 |f_{j,k}(x_k)|\le Cr^2,
 \qquad
 \norm{\nabla^mf_{j,k}(x_k)}_{\mathrm{op}}\le C_pr
 \quad(1\le m\le p).
\]
Since \(\rho_k=\rho_0\), the containing-to-small radius ratio in every
tensor block is at most \(C_pr^2T_0\).

If \(0<\delta_k<4\rho_0\), let \(x_\ell\) be a closest earlier query. All
active candidates have identical derivatives through order \(p\) at
\(x_\ell\). Equations \eqref{eq:g-perturbation-budgets} and
\eqref{eq:g-lower-radial-hessian} give
\(\norm{\nabla^{p+1}f_{j,k}}_{\mathrm{op}}\le C_pr\) along the connecting
geodesic. For two active candidates, parallel transport along this geodesic
and the fundamental theorem of calculus first bound the difference of
their \(p\)-th derivatives by \(2C_pr\delta_k\). Repeating the argument
gives, for \(0\le m\le p\),
\[
 \norm{\nabla^mf_{j,k}(x_k)-\nabla^mf_{j',k}(x_k)}_{\mathrm{op}}
 \le \frac{2C_pr}{(p+1-m)!}\delta_k^{p+1-m}.
\]
Because \(\delta_k<4\rho_0\le1/4\), the uniformly bounded triangular
conversion between covariant and normal-coordinate derivatives preserves
this order. Thus the \(m\)-th derivative data at \(x_k\) lie in a common
ball of radius
\begin{equation}
\label{eq:g-near-query-difference}
 C_pr\,\delta_k^{p+1-m},
 \qquad 0\le m\le p.
\end{equation}
Since \(\rho_k=\delta_k/4\), division by the two possible terms in
\eqref{eq:g-derivative-body} gives respectively
\(O_p(T_0)\) and \(O_p(rT_0\delta_k^{p-1})\). Both are at most
\(C_pr^2T_0\).

Let \({\cal B}\) be the origin-centered product body in
\eqref{eq:g-derivative-body}, and let \({\cal C}\) be the common containing
body. The volume ratio is at most \(D_r\defeq(C_pr^2T_0)^{M_p}\).
If \(\widetilde A_k\) denotes the indices remaining after the possible
discard, then
\[
 \int_{\cal C}\sum_{j\in\widetilde A_k}
 \mathbf1_{{\cal B}_{j,k}}(Y)\,dY
 =|\widetilde A_k|\operatorname{vol}({\cal B}).
\]
Hence some \(Y_k\in E_p\) belongs to at least
\(|\widetilde A_k|/D_r\) translated bodies. Return the corresponding
ordered covariant reply and retain those indices. The interpolation lemma
supplies the perturbation for every survivor, so
\begin{equation}
\label{eq:g-active-set-recurrence}
 |A_{k+1}|\ge\frac{|A_k|-1}{D_r}.
\end{equation}

Since \(T_0\le r\), one has \(\log(2D_r)\le C_p\log(2+r)\). Taking the
constant in the query budget sufficiently small gives
\(N(2D_r)^{-T_0}\ge4\).
While \(|A_k|\ge2\), \eqref{eq:g-active-set-recurrence} gives
\(|A_{k+1}|\ge|A_k|/(2D_r)\). Thus at least two candidates remain after
\(T_0\) rounds. Their minimizers are \(r/2\)-separated, so the deterministic
output is at distance at least \(r/4\) from one of them.

For the corresponding final objective, \eqref{eq:g-perturbation-budgets}
gives \(\norm{\nabla^2H}_{\mathrm{op}}\le1/4\) and
\(\norm{\nabla^{p+1}H}_{\mathrm{op}}\le r\). The squared-distance term is
one-strongly \(g\)-convex. The radial estimate above bounds its
\((p+1)\)-st derivative by \(C_p(1+r)\) on
\(\overline B(x_0,10r)\). Integration along geodesics gives the stated
Lipschitz bound for \(\nabla^pf\). This proves the
stated strong convexity, smoothness, and distance lower bound.
\end{proof}

\begin{proof}[Proof of \cref{thm:g-growing-curvature-lower}]
Apply \cref{lem:g-unit-resisting-oracle} with
\(r=\kappa R\). Identify \(\Hyp^2_\kappa\) with the constant metric
rescaling \(g_\kappa=\kappa^{-2}g_1\), and multiply the unit-curvature
objective by \(2\mu/\kappa^2\). The connection is unchanged, distances
scale by \(\kappa^{-1}\), and the norm of a covariant \((p+1)\)-tensor
scales by \(\kappa^{p+1}\). The resulting objective is
\(\mu\)-strongly \(g\)-convex and satisfies
\(\operatorname{Lip}_{\overline B(x_0,10R)}(\nabla^pf)
\le C_p\mu\kappa^{p-1}(1+\kappa R)\).
Take the right side as the class parameter \(L_p\). For sufficiently large
\(\kappa R\), \(\Qp=L_pR^{p-1}/\mu\asymp_p(1+\kappa R)^p\).
The query ball \(\overline B(x_0,10r)\) becomes
\(\overline B(x_0,10R)\), the minimizer lies in
\(\overline B(x_0,3R/4)\), and the output lower bound becomes
\(d(\widehat x,x^\star)\ge R/4\). With
\(X=\overline B(x_0,2R)\), the construction belongs to the class from
\cref{sec:problem-classes}. Therefore
\(T\ge c_p\kappa R/\log(2+\kappa R)
=\widetilde\Omega_p(\Qp^{1/p})\).
Finally, \eqref{eq:gap-distance} gives
\(f(\widehat x)-f^\star\ge\mu R^2/32\).
\end{proof}

\section{Discussion}
\label{sec:discussion}

In Euclidean space, the distinction between the two convexity notions
disappears. Expanding the squared-distance model in
\eqref{eq:h-minorant}, with \(v=\grad f(y)\), gives the quadratic lower model
in \eqref{eq:strong-g-convexity}. On a common domain and under the same
smoothness assumptions, strong \(h\)-convexity therefore implies strong
\(g\)-convexity \citep[Proposition~2(iv)]{CriscitielloKim2025}. The
underlying non-strong notions are not equivalent on hyperbolic space. The separation in this paper is
consequently geometric. It does not arise from two unrelated Euclidean
function classes. The classes in \cref{sec:problem-classes} use different
working regions, so their formal instance sets are not literally nested.
The implication applies after aligning their domains and smoothness
assumptions.

The stronger \(h\)-convex model makes negative curvature useful. At a queried
point, the gradient determines an intrinsic squared-distance minorant and,
through \cref{lem:h-busemann-minorant}, a Busemann support. The resulting
supporting horoball contains the minimizer. On hyperbolic space, the
intersection of this horoball with a second horoball containing the current
localization region contracts at the curvature scale. This is the mechanism
behind \cref{lem:h-curvature-localization}. After
\(O(\log(1+\kappa R))\) gradient queries, the effective radius is
\(O(1/\kappa)\). Thus \cref{thm:h-curvature-adaptive-upper} replaces \(\Qp\)
by \(\overline Q_{p,\kappa}=\Qp\min\{1,4/(\kappa R)\}^{p-1}\), up to the
localization cost. Curvature improves the information obtained
from the global supports supplied by the function class.

Strong \(g\)-convexity alone does not provide the same horospherical
supports. Its lower model is based at the queried point, and comparisons
between tangent spaces introduce the distortion factor \(\xi_D\). When
\(\xi_D\) is bounded, this distortion changes constants but not the Euclidean
\(\Qp^{2/(3p+1)}\) exponent. This accounts for the matching bounded-curvature
upper and lower bounds in \cref{thm:g-upper,thm:g-bounded-curvature-lower}.
When \(\kappa R\) grows, hyperbolic volume growth instead provides
exponentially many separated candidate minimizers. The construction behind
\cref{thm:g-growing-curvature-lower} preserves identical Riemannian
\(p\)-th-order replies at the queried points while eliminating only a
controlled fraction of these candidates. In this class, the same geometric
divergence that sharpens horoball localization supplies the resisting oracle
with many directions in which to hide the minimizer.

There is no conflict with the implication from strong \(h\)-convexity to
strong \(g\)-convexity. A lower bound for the larger \(g\)-convex class need
not hold for its \(h\)-convex subclass. The hard instances in
\cref{thm:g-growing-curvature-lower} cannot form a strongly \(h\)-convex
hard family with the corresponding constant-factor working region. At fixed
or bounded curvature, both classes retain the
Euclidean-optimal exponent. In the growing-curvature regime, the
\(h\)-convex upper bound benefits from curvature-scale localization, whereas
the \(g\)-convex hard family has \(\Qp\asymp_p(1+\kappa R)^p\) and requires
\(\widetilde\Omega_p(\Qp^{1/p})\) queries. These are one-sided results in the
growing-curvature regime. The \(\Qp\) form rewrites the bound for the family
with \(\Qp\asymp_p(1+\kappa R)^p\). It is not a claim of monotone
deterioration at fixed \(\Qp\). The results
establish a qualitative separation, rather than a complete minimax characterization.

Several questions remain open. The bounded-curvature results establish
optimality only in the \(\Qp\) exponent for fixed \(\xi_D\). Determining the
optimal dependence on \(\xi_D\), and the transition between bounded and
growing curvature, would sharpen the phase diagram. It is also open whether
the growing-curvature \(g\)-convex lower bound has a matching upper bound,
whether the lower bounds extend to randomized methods, and whether the
curvature-scale \(h\)-convex localization extends from hyperbolic space to
variable negative curvature. Finally, the results concern deterministic
exact Riemannian \(p\)-th-order oracle complexity in sufficiently high
dimension. They do not provide polynomial computational implementations of the
finite-dimensional subproblems allowed by the oracle model.

\bibliographystyle{apalike}    
\bibliography{refs}

\appendix
\section{Euclidean hard-instance regime}
\label{app:euclidean-transfer}

The two fixed-curvature lower bounds invoke the Euclidean hard instance of
\citet[Theorem~1]{KornowskiShamir2020}. For its parameters
\((\mu_{\rm E},L_{p,\rm E},R_{\rm E},\eps)\), the source theorem assumes
\begin{align}
 R_{\rm E}&>
 \max\left\{
 \sqrt3\left(\frac{p!2^{(p+3)/2}\mu_{\rm E}}{L_{p,\rm E}}\right)^{1/(p-1)},
 \sqrt3\,12^{(3p+1)/(2(p+1))}
 \left(\frac{p!2^{(p+3)/2}\mu_{\rm E}}{L_{p,\rm E}}
 \right)^{5/(4(2p-1))}
 \right\},
 \label{eq:euclidean-transfer-distance}\\
 \eps&<\min\left\{
 \left(\frac{(4p!)^2\mu_{\rm E}^{p+1}}{L_{p,\rm E}^2}\right)^{1/(p-1)},
 \frac{\mu_{\rm E}}8
 \left(\frac{p!2^{(p+3)/2}\mu_{\rm E}}{L_{p,\rm E}}\right)^{2/(p-1)}
 \right\}.
 \label{eq:euclidean-transfer-accuracy}
\end{align}
The Euclidean factor has dimension at least \(c_pT\), where \(T\) is the
claimed number of queries. We call
\eqref{eq:euclidean-transfer-distance}--\eqref{eq:euclidean-transfer-accuracy}
the \emph{Euclidean-transfer regime}.

\section{Related work}
\label{app:related-work}

The closest literature falls into four lines: Euclidean higher-order oracle
complexity, higher-order methods on manifolds, first-order \(g\)-convex
optimization, and horospherical convexity.  We summarize the parts needed to
place the present oracle model.

\paragraph{Euclidean higher-order oracle complexity.}
\label{app:rw-euclidean}

Regularized Taylor models and accelerated estimating sequences form the
classical basis of Euclidean tensor methods
\citep{Nesterov2021}.  For \(p=2\), the accelerated hybrid proximal
extragradient framework of \citet{MonteiroSvaiter2013} gives the
characteristic \(k^{-7/2}\) objective rate.  Three subsequent works obtained,
through related constructions, the general
\(\widetilde O(k^{-(3p+1)/2})\) rate for convex objectives with Lipschitz
\(p\)-th derivative
\citep{BubeckJiangLeeLiSidford2019,GasnikovDvurechensky2019,JiangWangZhang2019}.
\citet{KovalevGasnikov2022} later removed the logarithmic search overhead and
gave an exact \(O(\eps^{-2/(3p+1)})\) oracle bound.

Higher-order lower bounds were developed by
\citet{AgarwalHazan2018} and \citet{ArjevaniShamirShiff2019}.
\citet{GargKothariNetrapalliSherif2021} established a matching bound, up to
logarithmic factors, for randomized and quantum algorithms.  For
\(\mu\)-strongly convex objectives, \citet{KornowskiShamir2020} identify, for
fixed \(p\) and sufficiently high dimension, the tight condition-number scale
\((L_pR^{p-1}/\mu)^{2/(3p+1)}\), up to logarithmic factors, together with a doubly logarithmic dependence on
the target accuracy.

\paragraph{Higher-order methods on manifolds.}
\label{app:rw-riemannian-high-order}

Most higher-order Riemannian complexity results concern nonconvex stationarity.
Riemannian adaptive cubic regularization attains the Euclidean
\(O(\eps^{-3/2})\) iteration bound under suitable pullback assumptions, for
both the exponential map and general retractions
\citep{AgarwalBoumalBullinsCartis2021}.  Adaptive regularized-Newton methods
with H\"older-continuous Hessians have also been analyzed with iteration and
Hessian--vector-product guarantees \citep{ZhangJiang2025}.
\citet{GutmanLobo2026} treat arbitrary derivative order, establish
Euclidean-optimal rates for nonconvex first- and second-order stationarity, and
develop a calculus relating higher-order pullback regularity to the objective
and the chosen retraction.

\paragraph{Geodesically convex optimization.}
\label{app:rw-g}

\citet{ZhangSra2016} gave global complexity bounds for first-order
\(g\)-convex optimization on Hadamard manifolds and made the geometric
dependence explicit.  Subsequent work developed several approaches to
Riemannian acceleration.  These include local estimate-sequence arguments
\citep{ZhangSra2018}, global control of metric distortion
\citep{AhnSra2020}, global methods on hyperbolic and spherical spaces
\citep{MartinezRubio2022}, and tractable accelerated schemes with explicit
distortion factors \citep{KimYang2022}.  Proximal viewpoints include the
Riemannian inexact proximal-extragradient framework of \citet{JinSra2022} and
the constrained Hadamard-manifold method of
\citet{MartinezRubioPokutta2023}.

Negative curvature also appears in first-order lower bounds.
\citet{HamiltonMoitra2021} rule out robust acceleration on the hyperbolic plane
under a noisy gradient oracle.  \citet{CriscitielloBoumal2022} extend the
obstruction to deterministic algorithms receiving exact values and gradients.
\citet{CriscitielloBoumal2023} obtain further bounds across smooth and
nonsmooth, strongly and non-strongly \(g\)-convex classes, and study the
failure of naive finite interpolation conditions for \(g\)-convex functions.
The growing-curvature construction in this paper preserves the complete
Riemannian derivative reply through order \(p\), rather than only values and
gradients.

\paragraph{Horospheres and Busemann functions.}
\label{app:rw-h}

\citet{LewisLopezAcedoNicolae2024} consider objectives with horospherically
convex sublevel sets and prove a curvature-independent subgradient guarantee
in general Hadamard spaces.  Busemann subgradients also support splitting
methods for sums of functions on nonpositively curved metric spaces
\citep{GoodwinLewisLopezAcedoNicolae2024}.
\citet{CriscitielloKim2025} introduce \(h\)-convex functions on Hadamard
manifolds and give nonsmooth and smooth algorithms with curvature-independent
Euclidean rates, including a Nesterov-type accelerated method.  They also
treat finite sums under a weighted Fr\'echet-mean oracle. Their
curvature-dependent hyperbolic results include logarithmic localization to a
curvature-scale ball and a hyperbolic ellipsoid method. Our adaptive
higher-order result applies their localization before the Riemannian
\(p\)-th-order method.



\section*{Acknowledgement}

The proofs were developed with assistance from GPT-5.6 Sol, 
which also supported drafting and refinement of the manuscript. 
All AI-assisted content was carefully reviewed, edited, and verified by the authors, 
who take full responsibility for the content of the manuscript.

\end{document}